\numberwithin{equation}{section}
\theoremstyle{plain}
\newtheorem{thm}{Theorem}[section] 
\newtheorem{prop}[thm]{Proposition}
\newtheorem{cor}[thm]{Corollary}
\newtheorem{lem}[thm]{Lemma}
\newtheorem{theorem*}{Theorem}[]
\theoremstyle{definition}
\newtheorem{defn}[thm]{Definition}
\newtheorem{example}[thm]{Example}
\newtheorem{ques}{Question}
\theoremstyle{remark}
\newtheorem{rem}[thm]{Remark}
\newcommand{\C}{\mathbb{C}}
\newcommand{\N}{\mathbb{N}}
\newcommand{\R}{\mathbb{R}}
\def\accentclass@{7}
\def\makeacc@#1#2{\def#1{\mathaccent"\accentclass@#2 }}
\makeacc@\cir{017}
\title[(SSP) Geometry]
{On the Geometry of sets satisfying  the Sequence Selection Property}
\author{Satoshi Koike and Laurentiu Paunescu}
\address{Department of Mathematics, Hyogo University of Teacher Education,
Kato, Hyogo 673-1494, Japan}
\email{koike@hyogo-u.ac.jp} 
\address{School of Mathematics and Statistics, The University of Sydney, Sydney, 
NSW, 
2006,
Australia}
\email{laurent@maths.usyd.edu.au}
\subjclass[2010]{Primary
14P15, 32B20,
Secondary
57R45}
\keywords{direction set, transversality, sequence selection property,
bi-Lipschitz homeomorphism.}
\begin{document}


\begin{abstract}

In this paper we study fundamental directional properties of sets under the 
assumption  
of condition $(SSP)$ (introduced in \cite{koikepaunescu}). We show several 
transversality 
theorems in the singular case and an $(SSP)$-structure preserving theorem.
As a geometric illustration, our transversality results are used to prove several facts 
concerning complex analytic varieties in \ref{cxvar}. Also, using our results on sets with condition (SSP), we  give a classification of spirals in the appendix \ref{spirals}.

 The $(SSP)$-property is most suitable for  understanding transversality in the 
Lipschitz category. This property is shared by a large class of sets, in particular 
by  
subanalytic sets or by  definable sets in an o-minimal structure.

\end{abstract}

\maketitle
\bigskip
\section{Introduction}

The notions of tangent cone and direction set have taken
a very important role in the study of several equisingularity type problems,
in particular, after the pioneering works of H. Whitney \cite{whitney1, whitney}, 
to the topological equisingularity problem.
For instance, I. Nakai discussed and used  directional properties
in \cite{nakai} in order to show the appearance of topological
moduli in a family of polynomial map-germs : $(\R^n,0) \to (\R^p,0)$
for $n \ge 3$, $p \ge 2$.
On the other hand, the authors showed in \cite{koikepaunescu} 
that the dimension of the common
direction set of two subanalytic subsets, called the {\em directional
dimension}, is preserved by a bi-Lipschitz homeomorphism provided that
their images are also subanalytic.  
In order to prove this result we
introduced and employed in an essential way the notion of sequence
selection property ((SSP) for short).
(SSP) is a notion based on the direction set.
It takes an important role in the study of Lipschitz equisingularity.
Using the aforementioned theorem in \cite{koikepaunescu},
we can see that the Oka family \cite{oka} is not
Lipschitz trivial as a family of zero-sets of real 
polynomial function germs.
Our aim in this paper is to study the geometry of sets
satisfying (SSP), their behaviour under  bi-Lipschitz transformations
and to point out applications 
to complex singularities and also other fields.

In order to do this,  we introduce the notions of transversality and weak
transversality, using the real cone (half-cone) of the direction set,
essential tools for understanding the sets satisfying condition (SSP).
Our main concern is to decide under which conditions the
transversality of sets is preserved by (bi-Lipschitz) homeomorphisms.
In particular we show that the transversality for complex analytic
sets is preserved by bi-Lipschitz homeomorphisms (Theorem 3.2),
provided that their images are also complex
analytic sets, and that the weak transversality for general sets is
preserved by bi-Lipschitz homeomorphisms, provided that one of them
and its image satisfy the sequence selection property (Theorems
3.5 and 3.11). 
In fact the weak transversality is preserved for arbitrary sets if the
bi-Lipschitz homeomorphism satisfies the condition semiline-(SSP),
simply a corollary of Theorem 2.25.

In addition, we introduce and study the notion of (SSP) mappings. We
show that the (SSP) structure is preserved by (SSP) bi-Lipschitz
homeomorphisms (Theorem 4.7).  In general the behaviour of a
merely bi-Lipschitz homeomorphism can be very wild in respect to the
direction sets. We show that whenever a bi-Lipschitz homeomorphism is also an (SSP)
mapping, this is no longer the case. Indeed, we are able to control this behaviour by 
either
considering it in regard to sets satisfying condition (SSP) or by
considering bi-Lipschitz homeomorphisms endowed with extra properties.
In particular, we look for those homeomorphisms
with a good directional behaviour and we single out two large classes of
examples.

\section{Directional Properties of Sets}\label{}
\medskip

Let us recall our notion of  direction set. For simplicity in this paper we only 
consider  the direction sets  at the origin.

\begin{defn}\label{directionset}
Let $A$ be a set-germ at $0 \in \R^n$ such that
$0 \in \overline{A}$.
We define the {\em direction set} $D(A)$ of $A$ at $0 \in \R^n$ by
$$
D(A) := \{a \in S^{n-1} \ | \
\exists  \{ x_i \} \subset A \setminus \{ 0 \} ,
\ x_i \to 0 \in \R^n  \ \text{s.t.} \
{x_i \over \| x_i \| } \to a, \ i \to \infty \}.
$$
Here $S^{n-1}$ denotes the unit sphere centred at $0 \in \R^n$.
\end{defn}

For a subset $A \subset S^{n-1}$, we denote by $L(A)$
a half-cone of $A$ with the origin $0 \in \R^n$ as the vertex:
$$
L(A) := \{ t a \in \R^n\ | \ a \in A, \ t \ge 0 \}.
$$
In the case $A$ is a point (not the origin) we call $L(A)$ a {\it semiline}. 
For a set-germ $A$ at $0 \in \R^n$ such that $0 \in \overline{A}$, 
we put $LD(A) := L(D(A))$, and call it the {\em real tangent cone}
at $0 \in \R^n$.

Let $U, \ V \subset \R^n$ such that 
$0\in \overline{U} \cap \overline{V}$. 
The following are true:
\begin{enumerate} 
\item $D(\overline{U})=D(U)$
\item $D(U\cup V)=D(U)\cup D(V)$
\item $\overline{\cup_iD(U_i)}\subseteq D(\cup U_i)$
\item If $U_i$ are half-cones then $\overline{\cup_iD(U_i)}= D(\cup U_i)$
\item  $D(U\cap V)\subseteq  D(U)\cap D(V)$
\end{enumerate}



\subsection{Condition (SSP)}
\label{condition(SSP)}

In \cite{koikepaunescu} sea-tangle properties and directional
properties of sets with the sequence selection property played an
essential role in the proof of the main theorem (cf. Theorem \ref{main
  theorem}).  For the reader's convenience let us recall the main
theorem in \cite{koikepaunescu}.  See H. Hironaka \cite{hironaka} for
the definition of subanalyticity.

\begin{thm}\label{main theorem}
  (Main Theorem in \cite{koikepaunescu}) Let $A$, $B \subset \R^n$ be
  subanalytic set-germs at $0 \in \R^n$ such that $0 \in \overline{A}
  \cap \overline{B}$, and let $h : (\R^n,0) \to (\R^n,0)$ be a
  bi-Lipschitz homeomorphism.  Suppose that $h(A), \ h(B)$ are also
  subanalytic.  Then we have the equality of dimensions,
$$
\dim (D(h(A)) \cap D(h(B))) = \dim (D(A) \cap D(B)).
$$
\end{thm}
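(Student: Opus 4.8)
The statement to prove is Theorem \ref{main theorem}, the main theorem of \cite{koikepaunescu}. Since the full proof of this is exactly the content of that paper, what I sketch here is the strategy one would follow to reprove it using the directional machinery set up in the present paper. The key idea is that a bi-Lipschitz homeomorphism $h$, while it may badly distort individual direction sets, cannot destroy the \emph{dimension} of the intersection of two direction sets, and the engine behind this is condition (SSP). First I would reduce to understanding how $D(h(A))$ relates to $D(A)$: although $h$ need not send $D(A)$ onto $D(h(A))$, one shows that a bi-Lipschitz map induces, via the sequence selection property, a map between the relevant direction sets that is at least dimension-nondecreasing in one direction and, by applying the same argument to $h^{-1}$, dimension-nonincreasing in the other, so the dimension is an invariant.

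\textbf{Step 1: The sea-tangle / (SSP) reduction.} The first step is to recall that subanalytic set-germs satisfy condition (SSP): given a sequence $x_i \in A$ with $x_i/\|x_i\| \to a \in D(A)$, and any sequence of directions converging to $a$, one can select points of $A$ realizing those directions with controlled (comparable) norms. This is where subanalyticity of $A$, $B$ (and crucially of $h(A)$, $h(B)$) enters: it is the hypothesis that makes all four sets amenable to the sequence selection property. The plan is to use (SSP) to pass from the set-theoretic intersection $D(A) \cap D(B)$ to a ``thick'' family of sequences lying in both $A$ and $B$ simultaneously (in the sea-tangle sense), parametrized by a subset of $D(A) \cap D(B)$ of full dimension.

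\textbf{Step 2: Transport by $h$ and lower bound.} Given such a thick family of sequences near the origin that witness a subset $\Delta \subset D(A) \cap D(B)$ with $\dim \Delta = \dim(D(A)\cap D(B))$, apply $h$. Because $h$ is bi-Lipschitz, $\|h(x)\| \asymp \|x\|$, so $h$ sends sequences tending to $0$ to sequences tending to $0$ with comparable norms; the images lie in $h(A) \cap h(B)$. The resulting directions $h(x_i)/\|h(x_i)\|$ accumulate on a subset of $D(h(A)) \cap D(h(B))$. The heart of the matter is to show that this accumulation set has dimension at least $\dim \Delta$: this is precisely where one invokes the (SSP)-type control (a semiline-(SSP) or weak-transversality argument as in the Theorems 2.25 / 3.5 / 3.11 referenced in the introduction) to prevent the bi-Lipschitz image of a positive-dimensional family of directions from collapsing to something of smaller dimension. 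By symmetry, running the same argument with $h^{-1}$ in place of $h$ gives the reverse inequality, and the two together yield equality.

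\textbf{Main obstacle.} The delicate point — and the reason (SSP) was introduced in the first place — is Step 2's dimension lower bound: a priori a bi-Lipschitz homeomorphism can shrink a whole arc of directions to a single direction (different sequences with distinct limiting directions can be mapped to sequences sharing one limiting direction), so naive image-of-a-set arguments fail. The fix is to not track directions of individual sequences but to track the full (SSP) sea-tangle structure: one must show that if $h$ collapsed the directional dimension, then $h^{-1}$ applied to the collapsed image would have to \emph{expand} dimension beyond $n-1$, contradicting that $D(\cdot) \subset S^{n-1}$. Making this ``no net dimension change'' argument precise — choosing the witnessing families compatibly, controlling norms uniformly, and verifying the subanalyticity hypotheses are used exactly where needed — is the technical core; everything else (the norm comparisons, the basic properties (1)--(5) of $D$ listed above) is routine.
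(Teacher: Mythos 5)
This statement is not proved in the present paper at all: it is quoted verbatim as the Main Theorem of \cite{koikepaunescu}, so there is no internal proof to compare against; the relevant argument lives in that reference. Your submission is, by its own admission, a strategy outline rather than a proof, and the one step that carries all the content --- the dimension lower bound in your Step 2 --- is exactly the step you leave unproved. Worse, the ``fix'' you sketch for the main obstacle does not work as stated: you argue that if $h$ collapsed $\dim(D(A)\cap D(B))$ then $h^{-1}$ would have to ``expand dimension beyond $n-1$,'' contradicting $D(\cdot)\subset S^{n-1}$. But that contradiction is only available when $\dim(D(A)\cap D(B))=n-1$; if the intersection has dimension $k<n-1$ and $h$ collapses it to $k-1$, then $h^{-1}$ merely needs to raise dimension from $k-1$ back to $k$, which is in no tension with everything living inside $S^{n-1}$. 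A bi-Lipschitz map genuinely can raise and lower the dimension of a direction set of a non-(SSP) set (the spiral and zigzag examples in this very paper do exactly that), so no soft ``no net change'' argument can close the gap.

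What is actually needed, and what the proof in \cite{koikepaunescu} supplies, are two concrete lemmas that your outline only gestures at: (i) a sea-tangle characterisation of the directional dimension of the intersection --- for subanalytic germs (which satisfy (SSP), and for which a {\L}ojasiewicz-type inequality is available), $\dim(D(A)\cap D(B))$ is computed from $\dim D\bigl(ST_d(A;C)\cap ST_d(B;C)\bigr)$ for suitable degree $d>1$ and width $C$; and (ii) the quantitative stability of sea-tangle neighbourhoods under bi-Lipschitz maps, $h\bigl(ST_d(A;C)\bigr)\subset ST_d\bigl(h(A);C'\bigr)$ for a constant $C'$ depending only on $C$, $d$ and the Lipschitz constants of $h$. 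Combining (i) for the pair $(A,B)$, (ii) to transport the fattened intersection, and (i) again for the subanalytic pair $(h(A),h(B))$ gives one inequality, and applying the same to $h^{-1}$ gives the other. Without stating and proving (i) and (ii) --- and (i) is where the subanalyticity of all four sets is genuinely used, not merely to ``make the sets amenable to (SSP)'' --- your argument does not get off the ground.
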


We denote by $(SSP)$ the sequence selection property for short.
Here we introduce a generalised notion of (SSP) 
relatively to a subset of $\R^n$.

\begin{defn}\label{SSP}
Let $A, B$ be two  set-germs at $0 \in \R^n$
such that $0 \in \overline{A} \cap \overline{B}, D(A)\subseteq D(B) $.
We say that $A$ satisfies {\em condition} $(SSP)${\it-relative to} $B$,
if for any sequence of points $\{ a_m \}$ of $B$
tending to $0 \in \R^n$ such that 
$\lim_{m \to \infty} {a_m \over \| a_m \| } \in D(A)$,
there is a sequence of points $\{ b_m \} \subset A$ such that
$$
\| a_m - b_m \| \ll \| a_m \|, \ \| b_m \| . 
$$ 

In the case $B= \R^n$ we will not mention $B$ (it is the usual (SSP) condition).
\end{defn}

Concerning this relative condition $(SSP)$,
we can easily show the following:

\begin{prop}\label{transitivity}
The relative  condition $(SSP)$ 
is transitive, namely if $A$ satisfies condition $(SSP)$-relative 
to $B$ and $B$ satisfies condition $(SSP)$-relative to $C$, 
then $A$ satisfies condition $(SSP)$-relative to $C$.
\end{prop}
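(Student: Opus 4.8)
The plan is to chain the two given relative $(SSP)$ conditions by interposing a sequence in $B$ between the prescribed sequence in $C$ and the sequence in $A$ that must be produced. The inclusion of direction sets demanded in Definition \ref{SSP} is automatic: from $D(A) \subseteq D(B) \subseteq D(C)$ we get $D(A) \subseteq D(C)$. So take an arbitrary sequence $\{c_m\} \subset C$ tending to $0$ with $c_m/\|c_m\| \to a$ for some $a \in D(A)$. Since $a \in D(A) \subseteq D(B)$, the hypothesis that $B$ satisfies $(SSP)$-relative to $C$, applied to $\{c_m\}$, produces $\{b_m\} \subset B$ with $\|c_m - b_m\| \ll \|c_m\|,\ \|b_m\|$.

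The step that needs a little care --- and the one I expect to be the only real obstacle --- is to verify that $\{b_m\}$ is itself admissible for the first relative condition, i.e. that $b_m \to 0$ and $b_m/\|b_m\| \to a$. The convergence $b_m \to 0$ is clear from $\|b_m\| \le \|c_m\| + \|c_m - b_m\|$. For the direction one uses the elementary estimate
$$
\Bigl\| \frac{b_m}{\|b_m\|} - \frac{c_m}{\|c_m\|} \Bigr\| \ \le\ \frac{\|b_m - c_m\|}{\|b_m\|} + \frac{\bigl|\,\|c_m\| - \|b_m\|\,\bigr|}{\|b_m\|} \ \le\ \frac{2\,\|c_m - b_m\|}{\|b_m\|} \ \longrightarrow\ 0 ,
$$
so that $b_m/\|b_m\| \to a \in D(A)$; here the $\ll \|b_m\|$ half of the first relative condition is precisely what is used.

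Applying the hypothesis that $A$ satisfies $(SSP)$-relative to $B$ to the sequence $\{b_m\}$ then yields $\{d_m\} \subset A$ with $\|b_m - d_m\| \ll \|b_m\|,\ \|d_m\|$. It remains only to check that $\|c_m - d_m\| \ll \|c_m\|,\ \|d_m\|$. Starting from $\|c_m - d_m\| \le \|c_m - b_m\| + \|b_m - d_m\|$ and dividing in turn by $\|c_m\|$ and by $\|d_m\|$, after splitting each into two summands the only terms not manifestly tending to $0$ are $\|b_m - d_m\|/\|c_m\|$ and $\|c_m - b_m\|/\|d_m\|$; these are dealt with by extracting a factor $\|b_m\|$ in numerator and denominator and using $\|b_m\|/\|c_m\| \to 1$ and $\|b_m\|/\|d_m\| \to 1$, both immediate from the two $\ll$ relations already in hand. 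This gives exactly the conclusion of Definition \ref{SSP} for $A$ relative to $C$.

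In short, the argument is entirely elementary --- a double application of the definition, once in the form $B$-relative-to-$C$ with test sequence $\{c_m\}$, then in the form $A$-relative-to-$B$ with test sequence $\{b_m\}$ --- and the only place where any thought is needed is the normalisation estimate of the second paragraph, ensuring that the interposed sequence $\{b_m\}$ still points in the direction $a \in D(A)$.
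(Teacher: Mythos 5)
Your argument is correct and complete: the paper itself omits the proof of this proposition (stating only that it is easily shown), and your chaining of the two relative conditions, together with the normalisation estimate showing that the interposed sequence $\{b_m\}$ retains the direction $a\in D(A)$ and the final comparison using $\|b_m\|/\|c_m\|\to 1$ and $\|b_m\|/\|d_m\|\to 1$, is exactly the intended elementary verification.
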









We give  some remarks on the relative condition $(SSP)$
( (2) and (3) follow from the above proposition).

\begin{rem}

\noindent
\begin{enumerate}
\item $A$ (resp. $\overline A$) satisfies  condition $(SSP)$-relative 
to $\overline A$ (resp. $A$).

\item $A$ satisfies condition $(SSP)$ if and only if  
$A$ satisfies  condition $(SSP)$-relative to $LD(A)$.

\item
 
$A$ satisfies  condition $(SSP)$ if and only if  
$\overline A$ satisfies  condition $(SSP).$
\item Let $A \subset \R^n$ be a set-germ at $0\in \R^n$ such that $0 \in \overline{A}$,
and let $d, \ C > 0$.
The {\em sea-tangle neighbourhood $ST_d(A;C)$ of $A$,
of degree $d$ and  width} $C$, is defined by:
$$
ST_d(A;C) := \{ x \in \R^n \ | \ dist (x,A) \le C |x|^d \}.
$$

Then,  $A$ satisfies condition $(SSP)$-relative to $ST_d(A;C), d>1.$ 
\end{enumerate}
\end{rem}

In this paper we consider also the notion of weak
sequence selection property, denoted by $(WSSP)$ for short.

\begin{defn}\label{WSSP}
Let $A, B$ be two set-germs at $0 \in \R^n$
such that $0 \in \overline{A} \cap \overline{B}, D(A)\subseteq D(B) $.
We say that $A$ satisfies {\em condition} $(WSSP)${\it-relative to} $B$,
if for any sequence of points $\{ a_m \}$ of $B$
tending to $0 \in \R^n$ such that 
$\lim_{m \to \infty} {a_m \over \| a_m \| } \in D(A)$,
there is a subsequence $\{ m_j \}$ of $\{ m \}$ with 
$\{ b_{m_j} \} \subset A$ such that
$$
\| a_{m_j} - b_{m_j} \| \ll \| a_{m_j} \|, \ \| b_{m_j} \| . 
$$ 
\end{defn}

We have the following characterisation of condition $(SSP)$.
The proof  in the relative case is similar to
the non-relative case  for  which we gave a detailed proof 
in \cite{koikeloipaunescushiota}.
We sketch  a slightly rough proof here.

\begin{prop}\label{SSPWSSP}
Let $A, B $ be two set-germs at $0 \in \R^n$
such that $0 \in \overline{A}\cap \overline{B}$.
If $A$ satisfies condition $(WSSP)$-relative to $B$, 
then it satisfies condition $(SSP)$-relative to $B$.
Namely, the conditions relative $(SSP)$ and relative $(WSSP)$ are equivalent.
\end{prop}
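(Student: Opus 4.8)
\medskip
\noindent\textbf{Proof proposal.}
The plan is to translate both conditions into a statement about the single nonnegative real quantity $c_m := \dist(a_m,A)/\|a_m\|$, and then to pass from relative $(WSSP)$ to relative $(SSP)$ using the elementary fact that a sequence of nonnegative reals tends to $0$ as soon as every subsequence of it has a further subsequence tending to $0$.

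First I would establish the reformulation. Call a sequence $\{a_m\}$ \emph{admissible} if $a_m \in B\setminus\{0\}$, $a_m \to 0$, and $\lim_{m\to\infty} a_m/\|a_m\| \in D(A)$. I claim that, for an admissible sequence, the existence of $\{b_m\}\subset A$ with $\|a_m - b_m\| \ll \|a_m\|$ and $\|a_m - b_m\| \ll \|b_m\|$ is equivalent to $c_m \to 0$. Indeed, such a $\{b_m\}$ forces $\dist(a_m,A) \le \|a_m - b_m\|$, hence $c_m \to 0$; conversely, choosing $b_m \in A$ with $\|a_m - b_m\| < \dist(a_m,A) + \|a_m\|^2$ gives $\|a_m - b_m\|/\|a_m\| < c_m + \|a_m\| \to 0$, and then $\|b_m\| \ge \|a_m\| - \|a_m - b_m\| \ge \tfrac12\|a_m\| > 0$ for large $m$, so that $\|a_m - b_m\| \ll \|b_m\|$ as well. (Thus the two-sided smallness is automatic from the one-sided one; since $A$ need not be closed it is the $\|a_m\|^2$-slack in the choice of $b_m$ that handles the case where the distance is not attained.) The identical statement applied to subsequences shows: $A$ satisfies $(SSP)$-relative to $B$ iff $c_m \to 0$ for every admissible $\{a_m\}$, while $A$ satisfies $(WSSP)$-relative to $B$ iff, for every admissible $\{a_m\}$, some subsequence of $(c_m)$ tends to $0$.

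With this in hand the proposition is purely formal. Assume relative $(WSSP)$ and let $\{a_m\}$ be admissible. The key observation is that every subsequence $\{a_{m_k}\}$ is again admissible, since $a_{m_k}/\|a_{m_k}\|$ still converges to the same point of $D(A)$; applying relative $(WSSP)$ to $\{a_{m_k}\}$ produces a further subsequence along which $c$ tends to $0$. Hence every subsequence of $(c_m)$ has a further subsequence converging to $0$, so $c_m \to 0$, i.e. relative $(SSP)$ holds; equivalently, by contradiction, if $c_m \not\to 0$ pick $\varepsilon>0$ and a subsequence with $c_{m_k}\ge\varepsilon$, then admissibility of $\{a_{m_k}\}$ and relative $(WSSP)$ give a sub-subsequence with $c\to0$, contradicting $c_{m_k}\ge\varepsilon$. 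The reverse implication (relative $(SSP)$ implies relative $(WSSP)$) is immediate. I do not expect a genuine obstacle here: the only slightly delicate points are the passage from $\dist(a_m,A)\ll\|a_m\|$ to a companion sequence in $A$ realizing the required two-sided smallness, and the stability of admissibility under passing to subsequences — the latter being exactly what lets relative $(WSSP)$ be bootstrapped to relative $(SSP)$.
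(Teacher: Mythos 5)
Your proof is correct and follows essentially the same route as the paper's (which argues the contrapositive: a sequence witnessing failure of relative $(SSP)$ has, after passing to a subsequence, $\dist(a_m,A)/\|a_m\|\to\alpha>0$, so no subsequence can satisfy the $(WSSP)$ requirement). Your version merely writes out the details the paper's sketch leaves implicit — in particular the equivalence with the condition $\dist(a_m,A)/\|a_m\|\to 0$, including the observation that the smallness relative to $\|b_m\|$ comes for free — which is exactly the content of Remark 2.7(1).
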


\begin{proof}
Assume that $A$ does not satisfy condition $(SSP)$.
Then there is a sequence of points $\{ a_m \in B\}$
tending to $0 \in \R^n$ such that
$\lim_{m \to \infty} {a_m \over \| a_m \|} \in D(A)$
and $\lim_{m \to \infty} {d(a_m,A) \over \| a_m \|} = \alpha > 0$,
where $d(a_m,A)$ denotes the distance between $a_m$ and $A$.
This implies that there does not exist a sequence of points
$\{ b_m \} \subset A$ such that $\| a_m - b_m \| \ll \| a_m \|$.
Therefore $A$ does not satisfy condition $(WSSP)$.
\end{proof}

We make  some remarks on $(SSP)$: 

\begin{rem}\label{remark1} 
\noindent

\begin{enumerate}

\item  

In fact one can easily see that  $A$ satisfies condition 
$(SSP)$-relative 
to $B$ if and only if  for any sequence of points $\{ a_m \}$ of $B$
tending to $0 \in \R^n$ such that 
$\lim_{m \to \infty} {a_m \over \| a_m \| } \in D(A)$, then  
$ d(a_m,A)\over \|a_m\|$ tends to $0 \in \R$. (Or there is a subsequence 
which tends to zero.)

\item  

Condition $(SSP)$ is $C^1$ invariant,
but not bi-Lipschitz invariant (cf. \S 5 in \cite{koikepaunescu}).
Note that condition $(SSP)$ is invariant
under a bi-Lipschitz homeomorphism $h : (\R,0) \to (\R,0)$.
We leave the proof of this fact to the interested reader.
\end{enumerate}
\end{rem} 

As stated in the above remark,  the condition $(SSP)$ is not
bi-Lipschitz invariant.
However if a map $h$ is bi-Lipschitz, we have the following: 

\begin{lem}\label{presdir}
Let $h : (\R^n,0) \to (\R^n,0)$ be a bi-Lipschitz homeomorphism,
and let $A, B $ be two set-germs at $0 \in \R^n$
such that $0 \in \overline{A}\cap \overline{B}$.
Then $A$ satisfies condition $(SSP)$-relative 
to $B$ and $D(B)= D(A)$ if and only if $h(A)$ satisfies
condition $(SSP)$-relative 
to $h(B)$ and $D(h(B))= D(h(A))$. 
From this we can conclude that if $A$ satisfies condition 
$(SSP)$, then $Dh(A)=Dh(LD(A)))$ and $h(A)$ satisfies condition 
$(SSP)$-relative to $h(LD(A))$ ($B=LD(A)$).
\end{lem}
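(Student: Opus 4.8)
The plan is to establish only the implication ``$\Rightarrow$'' for an arbitrary bi-Lipschitz homeomorphism, and to get ``$\Leftarrow$'' from it by applying the forward implication to $h^{-1}$ and to the pair $h(A),h(B)$, using $h^{-1}(h(A))=A$ and $h^{-1}(h(B))=B$. So assume $A$ satisfies $(SSP)$-relative to $B$ and $D(A)=D(B)$, and fix $L\ge 1$ with $L^{-1}\|x-y\|\le\|h(x)-h(y)\|\le L\|x-y\|$ near $0$; since $h(0)=0$ this also gives $L^{-1}\|x\|\le\|h(x)\|\le L\|x\|$. The one elementary fact I will keep reusing is that $h$ preserves ``$\ll$'': if $p_m,q_m\to 0$ and $\|p_m-q_m\|\ll\|p_m\|,\|q_m\|$, then $\|h(p_m)-h(q_m)\|\le L\|p_m-q_m\|=o(\|p_m\|)=o(\|h(p_m)\|)$ and likewise $=o(\|h(q_m)\|)$; hence $\|h(p_m)\|/\|h(q_m)\|\to 1$ and the unit vectors $h(p_m)/\|h(p_m)\|$ and $h(q_m)/\|h(q_m)\|$ have the same cluster set in $S^{n-1}$, so one converges if and only if the other does, and then to the same limit.

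Next I would prove $D(h(B))\subseteq D(h(A))$ using relative $(SSP)$: given $v\in D(h(B))$, choose $y_m\in h(B)\setminus\{0\}$ with $y_m\to 0$, $y_m/\|y_m\|\to v$, put $x_m:=h^{-1}(y_m)\in B$, and after passing to a subsequence assume $x_m/\|x_m\|\to a$, so $a\in D(B)=D(A)$; then $(SSP)$-relative to $B$ supplies $z_m\in A$ with $\|x_m-z_m\|\ll\|x_m\|,\|z_m\|$, whence $h(z_m)/\|h(z_m)\|\to v$ with $h(z_m)\in h(A)$ by the reused fact, so $v\in D(h(A))$. Granting the reverse inclusion (below) one then has $D(h(A))=D(h(B))$, so the standing requirement $D(h(A))\subseteq D(h(B))$ in the definition of ``$h(A)$ satisfies $(SSP)$-relative to $h(B)$'' holds, and that property follows in the same spirit: by Proposition~\ref{SSPWSSP} it is enough to verify relative $(WSSP)$, so for $u_m\in h(B)$, $u_m\to 0$, $u_m/\|u_m\|\to w\in D(h(A))$, set $t_m:=h^{-1}(u_m)\in B$, pass to a subsequence with $t_m/\|t_m\|\to s\in D(B)=D(A)$, apply $(SSP)$-relative to $B$ to get $t_m'\in A$ with $\|t_m-t_m'\|\ll\|t_m\|,\|t_m'\|$, and push forward: $h(t_m')\in h(A)$ and $\|u_m-h(t_m')\|\ll\|u_m\|,\|h(t_m')\|$.

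The step I expect to be the main obstacle is the reverse inclusion $D(h(A))\subseteq D(h(B))$ — informally, the portion of $A$ that ``sticks out'' of $B$ must not manufacture new limit directions after applying $h$. In the case $B=LD(A)$, which is precisely the case the final assertion needs, it is painless: for $v\in D(h(A))$ take $x_m\in A$ with $h(x_m)/\|h(x_m)\|\to v$ and, after a subsequence, $x_m/\|x_m\|\to a\in D(A)$; then $x_m':=\|x_m\|\,a$ lies on the semiline $L(\{a\})\subseteq L(D(A))=LD(A)=B$, and $\|x_m-x_m'\|=\|x_m\|\cdot\|x_m/\|x_m\|-a\|\ll\|x_m\|=\|x_m'\|$, so by the reused fact $h(x_m')/\|h(x_m')\|\to v$ with $h(x_m')\in h(B)$, giving $v\in D(h(B))$. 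For a general $B$ one cannot project, and must instead relocate the sequence $\{x_m\}\subset A$ onto $B$ so as to preserve its image direction, using that $A$ satisfies $(SSP)$-relative to $B$ together with $D(A)=D(B)$; this is the heart of the matter and the point where the two hypotheses are used in full force.

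Granting the equivalence, the concluding assertion is an immediate specialisation to $B=LD(A)$: $A$ satisfies $(SSP)$ if and only if it satisfies $(SSP)$-relative to $LD(A)$ (item (2) of the remark following Proposition~\ref{transitivity}), and $D(LD(A))=D(A)$ since $D(A)$ is closed and $D(L(S))=\overline{S}$ for $S\subseteq S^{n-1}$; hence $(A,LD(A))$ satisfies the left-hand side, and the equivalence yields $D(h(A))=D(h(LD(A)))$ together with the assertion that $h(A)$ satisfies $(SSP)$-relative to $h(LD(A))$.
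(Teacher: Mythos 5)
Your forward/backward-by-symmetry skeleton, the ``$h$ preserves $\ll$'' estimate, the inclusion $D(h(B))\subseteq D(h(A))$, and the $(WSSP)$-to-$(SSP)$ upgrade are all correct, and your treatment of the case $B=LD(A)$ (the only case the paper subsequently uses, e.g.\ in Theorem \ref{2SSP}) is complete. Note that the paper's own one-line proof takes a different route: it invokes the characterisation in Remark \ref{remark1}(1), namely that relative $(SSP)$ is equivalent to $d(a_m,A)/\|a_m\|\to 0$, a quantity that is manifestly bi-Lipschitz invariant; that argument transfers the relative-$(SSP)$ part cleanly but, like yours, says nothing about the directional equality.

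The step you flagged as ``the heart of the matter'' --- $D(h(A))\subseteq D(h(B))$ for general $B$ --- is a genuine gap, and it cannot be filled by ``relocating $\{x_m\}\subset A$ onto $B$'' as you hope. The hypothesis that $A$ satisfies $(SSP)$-relative to $B$ controls how close points of $B$ are to $A$, not the reverse, and $D(A)=D(B)$ gives only directional agreement, not metric proximity of $A$ to $B$ at comparable radii. Concretely, take $n=2$, let $A$ be the positive $x$-axis, let $B=\{(e^{-2\pi k},0)\ |\ k\in\N\}\subset A$, and let $h$ be the logarithmic-spiral bi-Lipschitz homeomorphism of Figure \ref{logspiral} (Example 3.3 of \cite{koikepaunescu}), $h(re^{i\alpha})=re^{i(\alpha+\log(1/r))}$. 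Then $D(A)=D(B)=\{(1,0)\}$ and $A$ trivially satisfies $(SSP)$-relative to $B$ (take $b_m=a_m$), yet $h(b_k)=b_k$ so $D(h(B))=\{(1,0)\}$ while $D(h(A))=S^1$. So the literal ``if and only if'' fails for arbitrary $B$; what survives is exactly the situation where every point of $A$ is relatively close to $B$ as well --- e.g.\ $B\supseteq LD(A)$, or more generally when one additionally assumes $B$ satisfies $(SSP)$-relative to $A$ (automatic for $B=LD(A)$ via your radial-projection argument). You should either add such a hypothesis or restrict the statement to the case $B=LD(A)$ that you proved; as it stands, neither your argument nor the paper's establishes the general directional equality, and the example above shows no argument can.
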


\begin{proof} Use (1) of remark \ref{remark1}.
\end{proof}
Below we give several examples of sets satisfying the condition (SSP).

\begin{rem}\label{remark2}
Let $A, B \subseteq \R^n$ be  set-germs at $0 \in \R^n$
such that $0 \in \overline{A}\cap \overline{B}$.
\begin{enumerate}
\item The cone $LD(A)$ satisfies condition $(SSP)$.

\item If $A$ is subanalytic or definable in an o-minimal structure, then it satisfies condition $(SSP)$.

\item If $A$ is a finite union of sets, all of which satisfy 
condition $(SSP)$, then $A$ satisfies condition $(SSP)$.

\item If $0\in A$, a $C^1$ manifold, then it satisfies condition 
$(SSP)$ and $LD(A)=T_0(A)$. 
(This is not necessarily true for $C^0$ manifolds or if 
$0\notin A$.)

\item If $A\subseteq B, D(A)=D(B), A$ satisfies condition  $(SSP)$, then $B$ 
satisfies 
condition  $(SSP)$.

\item  If $A\cup \{0\}$ is path connected with $D(A)$ a point, then $A$ satisfies 
condition  $(SSP)$. The trajectories of the gradient flow of an analytic function 
satisfy 
this property; this is the famous gradient conjecture of R. Thom, proven in 
\cite{KMP}. 
They may not be always subanalytic.  

\item  If $D(A)=\{a_1,...,a_k\}$  and there are  subsets 
$A_i\subseteq A,  D(A_i)=\{a_i\}$  and  $A_i\cup\{0\}, i=1,...,k,$ are 
path connected, 
then $A$ satisfies condition  $(SSP)$.
\end{enumerate}
\end{rem}

We give one more important example satisfying condition $(SSP)$.

\begin{prop}\label{h(LD(A))}(Proposition 6.3 in \cite{koikepaunescu})
Let $h : (\R^n,0) \to (\R^n,0)$ be a bi-Lipschitz homeomorphism,
and let $A$, $h(A) \subset \R^n$ be 
subanalytic set-germs at $0 \in \R^n$ such that
$0\in \overline{A}$.
Then the set $h(LD(A))$ satisfies condition $(SSP)$.
\end{prop}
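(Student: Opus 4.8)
The plan is to reduce the statement to the preservation lemma already established, namely Lemma~\ref{presdir}, applied to the pair of set-germs $(LD(A), A)$. The starting observation is that the cone $LD(A)$ always satisfies condition $(SSP)$ (this is (1) of Remark~\ref{remark2}), and that $A$ itself satisfies condition $(SSP)$-relative to $LD(A)$; indeed this is precisely the content of (2) of Remark~\ref{remark2} under the subanalyticity hypothesis, or alternatively it is the ``if'' direction of (2) in the first Remark on the relative condition together with the definition. More useful for us is the reverse relation: I want to record that $LD(A)$ satisfies condition $(SSP)$-relative to $A$. This is true because $D(LD(A)) = D(A)$ (the direction set of the half-cone on $D(A)$ is $D(A)$ again, since $D(L(D(A))) = \overline{D(A)} = D(A)$ as $D(A)$ is already closed), and because for any sequence $\{a_m\} \subset A$ with $a_m/\|a_m\| \to a \in D(LD(A)) = D(A)$, the point $b_m := \|a_m\| \, (a_m/\|a_m\|) \in LD(A)$... wait, that is just $a_m$ itself, which need not lie in $LD(A)$; instead one takes $b_m := \|a_m\|\, a_m' $ where $a_m'$ is a point of $D(A)$ with $\|a_m/\|a_m\| - a_m'\|$ as small as we like, and then $\|a_m - b_m\| \ll \|a_m\|$. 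Hence $LD(A)$ satisfies condition $(SSP)$-relative to $A$, and $D(A) = D(LD(A))$.

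With this in hand I apply Lemma~\ref{presdir} with the roles ``$A$'' $\leftarrow LD(A)$ and ``$B$'' $\leftarrow A$: the lemma says that since $LD(A)$ satisfies $(SSP)$-relative to $A$ and $D(A) = D(LD(A))$, we conclude that $h(LD(A))$ satisfies condition $(SSP)$-relative to $h(A)$ and $D(h(A)) = D(h(LD(A)))$. Now I invoke the chain: $h(LD(A))$ satisfies $(SSP)$-relative to $h(A)$, and—crucially—$h(A)$, being a subanalytic set-germ by hypothesis, satisfies condition $(SSP)$, which by (2) of the first Remark on the relative condition means $h(A)$ satisfies $(SSP)$-relative to $LD(h(A)) = L(D(h(A)))$. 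Since $D(h(A)) = D(h(LD(A)))$, the cone $LD(h(A))$ is the same as $LD(h(LD(A)))$, and it satisfies condition $(SSP)$ outright (Remark~\ref{remark2}(1)).

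To finish, I need $h(LD(A))$ to satisfy $(SSP)$, i.e. $(SSP)$-relative to its own real tangent cone $LD(h(LD(A)))$. By transitivity of the relative condition (Proposition~\ref{transitivity}): $h(LD(A))$ is $(SSP)$-relative to $h(A)$, and $h(A)$ is $(SSP)$-relative to $LD(h(A)) = LD(h(LD(A)))$; therefore $h(LD(A))$ is $(SSP)$-relative to $LD(h(LD(A)))$, which by Remark on the relative condition (2) is exactly the assertion that $h(LD(A))$ satisfies condition $(SSP)$. The one point requiring care—the main obstacle—is the bookkeeping of direction sets: I must be sure that $D(h(A)) = D(h(LD(A)))$ so that the tangent cone $LD(h(A))$ genuinely coincides with $LD(h(LD(A)))$, otherwise the transitivity step connects $h(LD(A))$ to the wrong cone. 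That equality, however, is delivered directly by Lemma~\ref{presdir} as applied above, so the argument closes. (This is, of course, essentially the proof given in \cite{koikepaunescu}; we include it for completeness.)
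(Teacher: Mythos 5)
The paper does not actually prove this proposition here; it is quoted verbatim from \cite{koikepaunescu} (Proposition 6.3 there), so there is no in-paper argument to compare against. Your derivation from the machinery of Section 2 --- namely $LD(A)$ satisfies $(SSP)$ and is $(SSP)$-relative to $A$ with $D(LD(A))=D(A)$, transport this through $h$, use subanalyticity of $h(A)$ to get $h(A)$ $(SSP)$-relative to $LD(h(A))$, and close by transitivity --- is structurally sound, and the transitivity step at the end is handled correctly (including the identification $LD(h(A))=LD(h(LD(A)))$ needed so that the chain lands on the right cone).

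There is, however, one step you should not leave resting on Lemma~\ref{presdir} alone: the equality $D(h(A))=D(h(LD(A)))$. The robust half of that lemma's conclusion is the relative-$(SSP)$ statement (it follows from the metric reformulation in Remark~\ref{remark1}(1), which is manifestly bi-Lipschitz transportable), and with it the inclusion $D(h(A))\subseteq D(h(LD(A)))$. The reverse inclusion $D(h(LD(A)))\subseteq D(h(A))$ requires points of $LD(A)$ to be approximated by points of $A$ at rate $o(\|x\|)$ --- i.e.\ it requires $A$ itself to satisfy $(SSP)$ --- and this does not follow from ``$LD(A)$ is $(SSP)$-relative to $A$ and $D(A)=D(LD(A))$'' (take $A$ a sparse sequence on a ray and $B$ the ray to see that these hypotheses alone cannot control $D(h(A))$ from below). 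As written, your proof never invokes the subanalyticity of $A$, only that of $h(A)$; since the proposition genuinely needs both hypotheses, that is a warning sign. The repair is immediate: cite Lemma~\ref{directionrel}, which gives $D(h(A))=D(h(LD(A)))$ precisely when $A$ satisfies $(SSP)$, and $A$ does because it is subanalytic (Remark~\ref{remark2}(2)). With that substitution the argument is complete.
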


Concerning the condition (SSP) it is important to remember  that $LD(A)$ satisfies  
condition (SSP) for any subset $A, 0\in \overline A$. Accordingly we will try to 
replace $A$ by its real tangent  cone $LD(A)$ whenever possible and convenient. The 
remaining results of this subsection are in this spirit.  We recall the following  
lemma.

\begin{lem}\label{directionrel}(Lemma 5.6 in \cite{koikepaunescu})
Let $h : (\R^n,0) \to (\R^n,0)$ be a bi-Lipschitz homeomorphism,
and  let $A\subset \R^n$ be 
a set-germ at $0\in \R^n$ such that $0\in \overline A$.
Then $D(h(A)) \subset D(h(LD(A))).$
If $A$ satisfies condition $(SSP)$ or if $h$ is a $C^1-$diffeomorphism 
the equality holds.
\end{lem}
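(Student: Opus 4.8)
The plan is to derive both the inclusion and the two equality statements from a single elementary property of $h$. Fix a bi-Lipschitz constant $K$ for $h$, so $K^{-1}\|p-q\|\le\|h(p)-h(q)\|\le K\|p-q\|$ for all $p,q$, and in particular $\|h(p)\|\ge K^{-1}\|p\|$ since $h(0)=0$. The engine of the proof is the following \emph{comparison fact}: if $\{p_i\},\{q_i\}$ tend to $0$, $p_i\neq 0$, and $\|p_i-q_i\|\ll\|p_i\|$, then $\|h(p_i)-h(q_i)\|\le K\|p_i-q_i\|\ll\|p_i\|\le K\|h(p_i)\|$, hence $\|h(p_i)-h(q_i)\|\ll\|h(p_i)\|$; consequently $h(q_i)\to 0$, $h(q_i)\neq 0$ for large $i$ whenever $q_i\neq 0$, and $\lim_i h(q_i)/\|h(q_i)\|=\lim_i h(p_i)/\|h(p_i)\|$ whenever the latter limit exists.

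For the inclusion $D(h(A))\subseteq D(h(LD(A)))$, take $a\in D(h(A))$ realised by a sequence $y_i=h(x_i)$ with $x_i\in A\setminus\{0\}$, $x_i\to 0$, $y_i/\|y_i\|\to a$. Passing to a subsequence we may assume $x_i/\|x_i\|\to v$ for some $v\in D(A)$; then $z_i:=\|x_i\|\,v\in LD(A)$ and $\|x_i-z_i\|=\|x_i\|\cdot\|x_i/\|x_i\|-v\|\ll\|x_i\|$. Applying the comparison fact with $(p_i,q_i)=(x_i,z_i)$, and using that $h(x_i)/\|h(x_i)\|=y_i/\|y_i\|\to a$, we obtain $h(z_i)\in h(LD(A))\setminus\{0\}$, $h(z_i)\to 0$, and $h(z_i)/\|h(z_i)\|\to a$; hence $a\in D(h(LD(A)))$.

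Next, suppose $A$ satisfies $(SSP)$; I prove the reverse inclusion $D(h(LD(A)))\subseteq D(h(A))$. Take $a\in D(h(LD(A)))$ realised by $w_i\in LD(A)\setminus\{0\}$, $w_i\to 0$, $h(w_i)/\|h(w_i)\|\to a$. Each $w_i/\|w_i\|$ belongs to $D(A)$; passing to a subsequence, $w_i/\|w_i\|\to a_0$, and since $D(A)$ is closed (a routine diagonal argument) we have $a_0\in D(A)$. Thus $\{w_i\}$ tends to $0$ with normalised directions converging to a point of $D(A)$, so condition $(SSP)$ for $A$ furnishes $q_i\in A$ with $\|w_i-q_i\|\ll\|w_i\|$. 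The comparison fact with $(p_i,q_i)=(w_i,q_i)$, together with $h(w_i)/\|h(w_i)\|\to a$, yields $h(q_i)\in h(A)\setminus\{0\}$, $h(q_i)\to 0$, $h(q_i)/\|h(q_i)\|\to a$, so $a\in D(h(A))$.

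Finally, if $h$ is a $C^1$-diffeomorphism then $h(x)=Dh_0(x)+o(\|x\|)$ with $Dh_0$ invertible, so for any set-germ $S$ with $0\in\overline S$, dividing $h(x)$ by $\|x\|$ along sequences in $S$ gives $D(h(S))=\psi(D(S))$, where $\psi(v):=Dh_0(v)/\|Dh_0(v)\|$ is a well-defined self-map of $S^{n-1}$ (a homeomorphism, the reverse inclusion using that $h^{-1}$ is $C^1$ as well). Since $LD(A)=L(D(A))$ is a half-cone and the direction set of a half-cone $L(T)$ equals $\overline T$, we get $D(LD(A))=\overline{D(A)}=D(A)$, whence $D(h(LD(A)))=\psi(D(A))=D(h(A))$. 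The only place where something genuinely happens is the $(SSP)$ step: the comparison fact pins down the direction of $h(q_i)$ only once we know $\|w_i-q_i\|\ll\|w_i\|$, which is exactly what $(SSP)$ provides (via Remark \ref{remark1}(1)), and it must be invoked after first passing to a subsequence along which $w_i/\|w_i\|$ converges so that the hypothesis of $(SSP)$ is met; everything else is bookkeeping with the bi-Lipschitz inequalities and the definition of the direction set.
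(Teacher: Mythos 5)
Your proof is correct; the paper itself gives no argument for this lemma (it is quoted as Lemma 5.6 of \cite{koikepaunescu}), and your "comparison fact" is precisely the standard transport-of-nearby-sequences technique that the paper uses in its other proofs (e.g.\ of Theorem \ref{2SSP} and Proposition \ref{closed cone}). All three parts — the inclusion via $z_i=\|x_i\|v$, the reverse inclusion via $(SSP)$ after passing to a subsequence so that $w_i/\|w_i\|$ converges in the closed set $D(A)$, and the $C^1$ case via the induced map $\psi$ on $S^{n-1}$ together with $D(LD(A))=D(A)$ — are sound.
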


Using the above lemmas we can improve Proposition \ref{h(LD(A))}.
In fact, we gave an improvement in the non-relative case
in \cite{koikeloipaunescushiota}.
Here we generalise it to the relative case.  

\begin{thm}\label{2SSP}
Let $h : (\R^n,0) \to (\R^n,0)$ be a bi-Lipschitz homeomorphism, and let
$A \subset \R^n$ be a set-germ at $0\in \R^n$
such that $0\in \overline{A}$,  and  $B \subset \R^n$ a set-germ at $0\in \R^n$ 
such that $0\in \overline{B}$.
Assume that  $A$ satisfies condition $(SSP)$. 
Then  $h(A)$ satisfies condition $(SSP)$-relative to $B$ 
if and only if  $h(LD(A))$ satisfies condition $(SSP)$-relative to $B$. 
\end{thm}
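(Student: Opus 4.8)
The plan is to deduce Theorem \ref{2SSP} from the transitivity of the relative $(SSP)$ property (Proposition \ref{transitivity}) together with the equality $D(h(A)) = D(h(LD(A)))$, which is available because $A$ satisfies condition $(SSP)$ (Lemma \ref{directionrel}). Write $E := D(h(A)) = D(h(LD(A)))$ and note $LD(h(A)) = L(E) = LD(h(LD(A)))$; so the two sets $h(A)$ and $h(LD(A))$ have the same direction set and hence the same real tangent cone. The key preliminary observation is that $h(A)$ satisfies condition $(SSP)$-relative to $h(LD(A))$, and symmetrically $h(LD(A))$ satisfies condition $(SSP)$-relative to $h(A)$. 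The first of these is exactly the last sentence of Lemma \ref{presdir} (applied with $B = LD(A)$, using that $A$ satisfies $(SSP)$, so $D(LD(A)) = D(A)$ and $A$ satisfies $(SSP)$-relative to $LD(A)$ by Remark after Definition \ref{SSP}). For the reverse direction one uses Proposition \ref{h(LD(A))} (or rather its relative strengthening via Lemma \ref{presdir} again): since $LD(A)$ trivially satisfies $(SSP)$-relative to $A$ when $D(A) = D(LD(A))$ — because $A \subseteq LD(A)$ is not quite what we need, so instead one argues that $\overline{A}$ and $A$ are mutually $(SSP)$-relative and $LD(A)$, being the cone on $D(A)$, is $(SSP)$-relative to $\overline{A}$ by an approximation-by-rays argument — and then transports this across $h$ by Lemma \ref{presdir}.

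Granting these two mutual relative-$(SSP)$ statements, the theorem is immediate from Proposition \ref{transitivity}: if $h(A)$ satisfies $(SSP)$-relative to $B$, then since $h(LD(A))$ satisfies $(SSP)$-relative to $h(A)$, transitivity gives that $h(LD(A))$ satisfies $(SSP)$-relative to $B$; conversely, if $h(LD(A))$ satisfies $(SSP)$-relative to $B$, then since $h(A)$ satisfies $(SSP)$-relative to $h(LD(A))$, transitivity gives that $h(A)$ satisfies $(SSP)$-relative to $B$. One should double-check the inclusion hypotheses $D(\cdot) \subseteq D(\cdot)$ required in Definition \ref{SSP} at each application of transitivity: all the relevant direction sets involving $h(A)$ and $h(LD(A))$ coincide (both equal $E$), and $D(h(A)) \subseteq D(B)$ is precisely the standing hypothesis packaged inside "$h(A)$ satisfies $(SSP)$-relative to $B$", so the chain is consistent.

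The main obstacle I anticipate is establishing cleanly that $h(LD(A))$ satisfies condition $(SSP)$-relative to $h(A)$ — i.e. the "wrong" direction, approximating points of $h(A)$ by points of the image of the cone. The subtlety is that $LD(A)$ need not contain $A$, so one cannot simply quote Remark \ref{remark2}(5); instead the argument must go through $\overline{A}$: first show $LD(A)$ satisfies $(SSP)$-relative to $\overline{A}$ (a sequence $a_m \in \overline{A}$ with $a_m/\|a_m\| \to a \in D(LD(A)) = D(A)$ can be approximated by the points $\|a_m\|\, a \in LD(A)$, since $\|a_m - \|a_m\| a\| = \|a_m\|\,\|a_m/\|a_m\| - a\| \ll \|a_m\|$), then use that $\overline{A}$ satisfies $(SSP)$-relative to $A$ (Remark after Definition \ref{SSP}(1)) and transitivity to get $LD(A)$ satisfies $(SSP)$-relative to $A$, and finally apply Lemma \ref{presdir} with the pair $(LD(A), A)$ — whose direction sets agree — to transport this to $h(LD(A))$ satisfying $(SSP)$-relative to $h(A)$. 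Once this bookkeeping is done, the rest is a two-line application of transitivity in each direction.
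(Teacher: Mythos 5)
Your proof is correct, and it is in substance the paper's own argument, repackaged through the transitivity proposition. The paper argues by hand: given a sequence in $B$ directed along $D(h(LD(A)))=D(h(A))$, it uses the hypothesis to produce nearby points of $A$ (pulled back through $h$), replaces those by points of $LD(A)$ using the (SSP) of the cone, pushes forward by $h$, and chains the estimates by the triangle inequality, finishing with the $(WSSP)\Rightarrow(SSP)$ reduction of Proposition \ref{SSPWSSP}. You factor exactly this chaining through Proposition \ref{transitivity}, after isolating the two facts that $h(A)$ and $h(LD(A))$ are each $(SSP)$-relative to the other: the first is literally the last sentence of Lemma \ref{presdir}, and the second follows from Lemma \ref{presdir} applied to the pair $(LD(A),A)$ once one knows $LD(A)$ is $(SSP)$-relative to $A$ --- which, as your own computation $\| a_m - \|a_m\|a\| = \|a_m\|\,\| a_m/\|a_m\| - a\| \ll \|a_m\|$ shows, is immediate because $LD(A)$ satisfies the full condition $(SSP)$ and $D(LD(A))=D(A)$; the detour through $\overline{A}$ and a second appeal to transitivity is unnecessary, though harmless. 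The direction-set inclusions needed for each application of transitivity hold (with equality where relevant, by Lemma \ref{directionrel}), as you verify. The one caveat is that your argument leans on Lemma \ref{presdir}, for which the paper gives only a one-line proof; unwinding it lands you back on the paper's explicit sequence-chasing, so the two proofs buy the same thing --- yours is just the more modular presentation.
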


\begin{proof}
Let us assume that $h(A)$ satisfies condition $(SSP)$-relative 
to $B$.
By assumption, $A$ satisfies condition $(SSP)$.
Therefore it follows from Lemma \ref{directionrel} that
$D(h(LD(A))) = D(h(A))$.
Let $\{ y_m \}$ be an arbitrary sequence of points of $B$
tending to $0 \in \R^n$ such that
$$
\lim_{m \to \infty} {y_m \over \| y_m \|} \in D(h(LD(A))) = D(h(A)).
$$
Let $y_m = h(x_m)$ for each $m$.
Since $h(A)$ satisfies condition $(SSP)$-relative $B$, 
there is a sequence of points
$\{ z_m \} \subset A$ such that
$$
\| h(x_m) - h(z_m) \| \ll \| h(x_m) \| , \ \| h(z_m) \| .
$$

On the other hand, there is a subsequence $\{ z_{m_j} \}$
of $\{ z_m \}$ such that 
$\lim_{{m_j} \to \infty} {z_{m_j} \over \| z_{m_j} \|} \in D(A)$.
Since $LD(A)$ satisfies condition $(SSP)$, there is a sequence 
of points $\{ \theta_{m_j} \} \subset LD(A)$ such that
$$
\| z_{m_j} - \theta_{m_j} \| \ll \| z_{m_j} \| , \ \| \theta_{m_j} \| .
$$
It follows from  $h$ being bi-Lipschitz  that
$$
\| h(z_{m_j}) - h(\theta_{m_j}) \| \ll \| h(z_{m_j}) \| , \ 
\| h(\theta_{m_j}) \| .
$$
Then we have
$$
\| h(x_{m_j}) - h(\theta_{m_j}) \| \le
\| h(x_{m_j}) - h(z_{m_j}) \| + \| h(z_{m_j}) - h(\theta_{m_j}) \| 
\ll \| h(z_{m_j}) \| .
$$
Therefore we have
$$
\| h(x_{m_j}) - h(\theta_{m_j}) \| \ll \| h(x_{m_j}) \| , \ 
\| h(\theta_{m_j}) \| .
$$
Thus $h(LD(A))$ satisfies condition $(WSSP)$-relative to $B$, 
and also condition $(SSP)$-relative to $B$ by Proposition \ref{SSPWSSP}. 
The other claim can be proved in a similar way.
\end{proof}

Note that even if both $h(A)$ and $ h(LD(A))$  satisfy condition $(SSP)$, 
it does not imply that $A$ satisfies condition $(SSP)$ 
(the spiral example, Figure 1 below).

\begin{prop}\label{conicclosure}
Let $h : (\R^n,0) \to (\R^n,0)$ be a bi-Lipschitz homeomorphism,
and let $A \subset \R^n$ be a set-germ at $0\in \R^n$ such that $0\in \overline{A}$.
Then $LD(h(A))=LD(h(LD(A)))$ and $h(LD(A))$ satisfy condition $(SSP)$ 
if and only if $LD(h^{-1}(LD(h(A))))=LD(A)$ and 
$h^{-1}(LD(h(A)))$ satisfy condition $(SSP)$.
\end{prop}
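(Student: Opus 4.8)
The plan is to reduce the biconditional in \ref{conicclosure} to a single implication by a symmetry substitution, and then to prove that implication by combining Lemma \ref{directionrel} (for the tangent cone equality) with Theorem \ref{2SSP} (for the $(SSP)$ part). Write $(\mathrm I)$ for the left-hand condition ``$LD(h(A))=LD(h(LD(A)))$ and $h(LD(A))$ satisfies $(SSP)$'' and $(\mathrm{II})$ for the right-hand one ``$LD(h^{-1}(LD(h(A))))=LD(A)$ and $h^{-1}(LD(h(A)))$ satisfies $(SSP)$''. First I would note that, since $h^{-1}(h(A))=A$, the three expressions $LD(k(A'))$, $LD(k(LD(A')))$, $k(LD(A'))$ appearing in $(\mathrm I)$, evaluated at $(k,A')=(h^{-1},h(A))$, become $LD(A)$, $LD(h^{-1}(LD(h(A))))$, $h^{-1}(LD(h(A)))$; hence $(\mathrm I)$ for $(h^{-1},h(A))$ is literally $(\mathrm{II})$ for $(h,A)$, and symmetrically $(\mathrm{II})$ for $(h^{-1},h(A))$ is $(\mathrm I)$ for $(h,A)$. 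So it suffices to prove $(\mathrm I)\Rightarrow(\mathrm{II})$ for all admissible $(h,A)$; the converse is then the same implication applied to $(h^{-1},h(A))$.

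For the implication itself I would use the elementary facts that $D(LD(X))=D(X)$ and $LD(LD(X))=LD(X)$ for any set-germ $X$ with $0\in\overline X$ (because $D(X)$ is closed and $L$ of a subset of $S^{n-1}$ is recovered by intersecting with $S^{n-1}$), so that an equality of real tangent cones amounts to the equality of the corresponding direction sets. Assume $(\mathrm I)$ and put $E:=LD(A)$, $F:=h(E)=h(LD(A))$. Then $E$ satisfies $(SSP)$ by Remark \ref{remark2}(1), $D(E)=D(A)$, and by hypothesis $F$ satisfies $(SSP)$ and $LD(F)=LD(h(LD(A)))=LD(h(A))$. Applying Lemma \ref{directionrel} to the bi-Lipschitz map $h^{-1}$ and the $(SSP)$-set $F$ gives $D(h^{-1}(F))=D(h^{-1}(LD(F)))$; since $h^{-1}(F)=E$ and $LD(F)=LD(h(A))$ this reads $D(A)=D(h^{-1}(LD(h(A))))$, that is, $LD(h^{-1}(LD(h(A))))=LD(A)$, the first half of $(\mathrm{II})$. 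For the second half I would apply Theorem \ref{2SSP} to the map $h^{-1}$, the $(SSP)$-set $F$, and the cone $B:=E$ (which satisfies $0\in\overline E$): it gives that $h^{-1}(F)$ satisfies $(SSP)$-relative to $E$ iff $h^{-1}(LD(F))$ does. Now $h^{-1}(F)=E$ satisfies $(SSP)$, hence a fortiori $(SSP)$-relative to $E$, so $h^{-1}(LD(h(A)))=h^{-1}(LD(F))$ satisfies $(SSP)$-relative to $E$. Finally, from $D(h^{-1}(LD(h(A))))=D(A)=D(E)$ we get $LD(h^{-1}(LD(h(A))))=E$, so ``$(SSP)$-relative to $E$'' is precisely ``$(SSP)$-relative to its own real tangent cone'', which by the characterisation recorded in the remarks following Proposition \ref{transitivity} is equivalent to $(SSP)$. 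Thus $h^{-1}(LD(h(A)))$ satisfies $(SSP)$ and $(\mathrm{II})$ holds.

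The point to be careful about — and the only place I expect the argument could derail — is that one must not assume $A$ itself satisfies $(SSP)$: as the spiral example discussed after Theorem \ref{2SSP} shows, this may fail even when $(\mathrm I)$ holds. Accordingly, every use of Lemma \ref{directionrel} and Theorem \ref{2SSP} above is made with $F=h(LD(A))$ or $E=LD(A)$ playing the role of the $(SSP)$-set, never $A$. Granting that bookkeeping, the core of the proof is a short direction-set computation, and the symmetry reduction in the first paragraph removes the need to redo it for the reverse implication.
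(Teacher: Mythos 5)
Your proof is correct, and it reaches the conclusion by a different decomposition than the paper's. You both begin with the same symmetry reduction (you prove $(\mathrm I)\Rightarrow(\mathrm{II})$, the paper proves $(\mathrm{II})\Rightarrow(\mathrm I)$; these are interchanged by the substitution $(h,A)\mapsto(h^{-1},h(A))$, so the choice is immaterial). The difference is in how the single implication is established. The paper handles the cone equality by a chain of identities obtained from two applications of Lemma \ref{directionrel} (one with equality, one with the unconditional inclusion), and then proves the $(SSP)$ assertion by a self-contained sequence chase: reduce to points of $LD(h(A))$ using that cones satisfy $(SSP)$, pull back by $h^{-1}$, use the cone-equality hypothesis to land in $LD(A)$, approximate there, and push forward. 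You instead get the cone equality from a single application of Lemma \ref{directionrel} to $h^{-1}$ and the $(SSP)$ set $F=h(LD(A))$, and you obtain the $(SSP)$ assertion by invoking Theorem \ref{2SSP} (applied to $h^{-1}$, $F$, and $B=LD(A)$) together with the characterisation that $(SSP)$-relative to one's own real tangent cone is equivalent to $(SSP)$. This buys you a shorter, more modular argument that reuses results already proved, at the cost of having to check the bookkeeping of the relative-$(SSP)$ definition (the containments $D(\cdot)\subseteq D(B)$), which you do correctly and in the right order -- the cone equality must come first so that the relative condition is relative to the correct tangent cone. Your cautionary remark that $A$ itself need not satisfy $(SSP)$ is exactly the right thing to watch, and your argument indeed never uses it.
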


\begin{proof}
As our conditions are symmetric in $h$ (our bi-Lipschitz homeomorphism) it 
suffices to prove only the ``if'' part implication.
Since $h^{-1}(LD(h(A)))$ satisfies condition $(SSP)$ it follows that 
$$LD(h(A))=LD(h(h^{-1}(LD(h(A)))))=LD(h(LD(h^{-1}(LD(h(A)))))),$$ 
and because we always have
$$LD(h(LD(A)))=LD(h(LD(h^{-1}(h(A)))))\subseteq LD(h(LD(h^{-1}(LD(h(A))))))$$ 
it follows that $LD(h(A))=LD(h(LD(A)))$.

Assume that  $\{ h(y_m) \}$ is an arbitrary sequence of points of $\R^n$
tending to $0 \in \R^n$ such that
$$
\lim_{m \to \infty} {h(y_m) \over \|h( y_m) \|} \in D(h(LD(A))) = D(h(A)).
$$
As cones satisfy condition $(SSP)$ we can assume that 
$ h(y_m) \in LD(h(LD(A)))=LD(h(A))$,
so $y_m \in h^{-1}(LD(h(A)))$. Passing to a subsequence, 
if necessary, we may assume that in fact 
$\lim_{m \to \infty} {y_m \over \| y_m \|} \in  D(h^{-1}(LD(h(A))))=LD(A)$. 
Again as cones satisfy  condition $(SSP)$ we can claim 
the existence of a sequence
$x_i \in LD(A)$ such that  
$$
\| y_i - x_i \| \ll \| x_i \| , \ \| y_i \| .
$$
The fact that  $h$ is bi-Lipschitz implies that
$$
\| h(x_i) - h(y_i) \| \ll \| h(x_i) \| , \ 
\| h(y_i) \| .
$$
As $h(x_i)\in h(LD(A))$ we proved that $h(LD(A))$ satisfies 
condition $(SSP)$.
\end{proof}

\begin{rem}\label{slowspiral}
In order to show $h(LD(A))$ satisfies condition $(SSP)$, 
we cannot drop the assumption $LD(h^{-1}(LD(h(A))))=LD(A)$. 
Indeed if $h$ is the spiral bi-Lipschitz homeomorphism of Example 3.3 
in \cite{koikepaunescu}, we put $A=\R \times 0$ so that 
$h(LD(A))=h(A)$ is a spiral which does not satisfy condition
$(SSP)$ ( Figure 1 above). 
Clearly $h^{-1}(LD(h(A)))=\R^2$ so it satisfies condition $(SSP)$, 
and $LD(h^{-1}(LD(h(A))))=\R^2 \neq LD(A)$. 
\end{rem}

\begin{figure}
  \begin{tikzpicture}[scale=.5]
    \draw[->] (-7,0) -- (7,0) node[below] {$x$};
    \draw[->] (0,-4) -- (0,8) node[left] {$y$};
    \clip (-7,-4) rectangle (7,7.5);
    \draw plot[samples=1000,parametric,domain=-30:10,id=log-spiral]
    function {exp(.25*t)*cos(t),exp(.25*t)*sin(t)};
  \end{tikzpicture}
\caption{}
\label{logspiral}
\end{figure}


In  the same spirit we have the following.

\begin{prop}Let $h : (\R^n,0) \to (\R^n,0)$ be a bi-Lipschitz homeomorphism,
and let $A \subset \R^n$ be a set-germ at $0\in \R^n$ such that $0\in \overline{A}$.
The following are equivalent:
\begin{enumerate}
\item  $A, h(A)$ both satisfy condition $(SSP)$.
\item  $A, h^{-1}(LD(h(A)))$  both satisfy condition $(SSP)$ and 
$LD(h^{-1}(LD(h(A))))=LD(A)$.
\item $h(A), h(LD(A))$  both satisfy condition $(SSP)$ and 
$LD(h(LD(A)))=LD(h(A))$.
\end{enumerate}
\end{prop}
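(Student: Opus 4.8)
The plan is to establish the equivalence $(1)\Leftrightarrow(3)$ in full, and then to get $(1)\Leftrightarrow(2)$ for free by symmetry. Once $(1)\Leftrightarrow(3)$ is available, I apply it to the bi-Lipschitz homeomorphism $h^{-1}$ and the set-germ $h(A)$ in place of $h$ and $A$: condition $(1)$ for the pair $(h^{-1},h(A))$ reads ``$h(A)$ and $h^{-1}(h(A))=A$ satisfy $(SSP)$'', i.e. exactly $(1)$; condition $(3)$ for $(h^{-1},h(A))$ reads ``$A$ and $h^{-1}(LD(h(A)))$ satisfy $(SSP)$ and $LD(h^{-1}(LD(h(A))))=LD(h^{-1}(h(A)))=LD(A)$'', i.e. exactly $(2)$. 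So everything reduces to proving $(1)\Leftrightarrow(3)$.

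For $(1)\Rightarrow(3)$, assume $A$ and $h(A)$ both satisfy $(SSP)$. Since $A$ satisfies $(SSP)$, Lemma \ref{directionrel} gives $D(h(LD(A)))=D(h(A))$, and taking half-cones yields $LD(h(LD(A)))=LD(h(A))$. Again because $A$ satisfies $(SSP)$, Theorem \ref{2SSP} applied with $B=\R^n$ shows that $h(A)$ satisfies $(SSP)$ if and only if $h(LD(A))$ does; as $h(A)$ does by hypothesis, so does $h(LD(A))$. This is exactly $(3)$.

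For $(3)\Rightarrow(1)$: the clause ``$h(A)$ satisfies $(SSP)$'' is already part of $(3)$, so it remains only to show $A$ satisfies $(SSP)$. First, $(3)$ together with Proposition \ref{conicclosure} gives that $E:=h^{-1}(LD(h(A)))$ satisfies $(SSP)$ and that $LD(E)=LD(A)$, hence $D(E)=D(A)$. The heart of the argument is to prove that $A$ satisfies $(SSP)$-relative to $E$. Given a sequence $\{a_m\}\subset E$ with $a_m\to 0$ and $a_m/\|a_m\|\to v\in D(A)$, observe that $h(a_m)\in h(E)=LD(h(A))$, so $h(a_m)/\|h(a_m)\|\in D(h(A))$ for every $m$; passing to a subsequence $\{m_j\}$ we may assume $h(a_{m_j})/\|h(a_{m_j})\|\to w\in D(h(A))$ (the direction set being a closed subset of $S^{n-1}$). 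Now $h(A)$ satisfies $(SSP)$, hence also $(SSP)$-relative to $LD(h(A))$; applying this to $\{h(a_{m_j})\}\subset LD(h(A))$ produces points $c_j\in h(A)$ with $\|h(a_{m_j})-c_j\|\ll\|h(a_{m_j})\|,\ \|c_j\|$. Setting $b_j:=h^{-1}(c_j)\in A$ and using the bi-Lipschitz bounds for $h$ (which comparably relate $\|h(a_{m_j})\|$ to $\|a_{m_j}\|$, $\|c_j\|$ to $\|b_j\|$, and $\|h(a_{m_j})-c_j\|$ to $\|a_{m_j}-b_j\|$), the two $\ll$-estimates transport to $\|a_{m_j}-b_j\|\ll\|a_{m_j}\|,\ \|b_j\|$. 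Hence $A$ satisfies $(WSSP)$-relative to $E$, and therefore $(SSP)$-relative to $E$ by Proposition \ref{SSPWSSP}. Finally, recall that a set satisfies $(SSP)$ precisely when it satisfies $(SSP)$-relative to its own real tangent cone; so $E$ satisfies $(SSP)$-relative to $LD(E)=LD(A)$, and transitivity (Proposition \ref{transitivity}) yields that $A$ satisfies $(SSP)$-relative to $LD(A)$, i.e. $A$ satisfies $(SSP)$.

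The one genuinely substantive step — everything else being bookkeeping with the lemmas recalled above — is the production of the approximating points $b_m\in A$ in $(3)\Rightarrow(1)$: one feeds the sequence $h(a_m)$, which lies in $h(E)=LD(h(A))$ precisely by virtue of the tangent-cone identity in $(3)$ and Proposition \ref{conicclosure}, into the $(SSP)$ property of $h(A)$ relative to its own cone, and then pulls the resulting approximation back through $h^{-1}$ while keeping the $\ll$-relations under control via the Lipschitz constants; the unavoidable passage to a subsequence is exactly what forces the detour through relative $(WSSP)$ and Proposition \ref{SSPWSSP}.
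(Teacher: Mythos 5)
Your proof is correct. Note that the paper states this proposition without any proof at all (it is offered ``in the same spirit'' immediately after Proposition \ref{conicclosure}), so there is no printed argument to compare against; what you have written is precisely the kind of assembly of the surrounding results that the authors evidently intend. Your reduction of $(1)\Leftrightarrow(2)$ to $(1)\Leftrightarrow(3)$ by replacing $(h,A)$ with $(h^{-1},h(A))$ is clean and correct, the implication $(1)\Rightarrow(3)$ is exactly Lemma \ref{directionrel} plus Theorem \ref{2SSP} with $B=\R^n$, and in $(3)\Rightarrow(1)$ your use of Proposition \ref{conicclosure} to extract that $E=h^{-1}(LD(h(A)))$ satisfies $(SSP)$ with $LD(E)=LD(A)$, followed by the chain $A$ rel $E$, $E$ rel $LD(A)$ and Proposition \ref{transitivity}, is sound; the subsequence detour through relative $(WSSP)$ and Proposition \ref{SSPWSSP} is indeed unavoidable and handled correctly. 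The only remark worth making is that your ``heart of the argument'' (that $A$ satisfies $(SSP)$-relative to $E$, together with $D(E)=D(A)$) is already an instance of Lemma \ref{presdir} applied to the pair $(A,E)$: since $h(E)=LD(h(A))$ and $h(A)$ satisfies $(SSP)$, hence $(SSP)$-relative to $LD(h(A))$ with $D(LD(h(A)))=D(h(A))$, the lemma transports this back through $h^{-1}$ in one step. Using it would shorten your write-up, but your explicit bi-Lipschitz transport of the $\ll$-estimates is a perfectly valid (and self-contained) substitute.
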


\begin{example} For instance, the situation in the above result happens  in the 
following two general cases.

\begin{enumerate}

\item If both $A, h(A)$ are subanalytic or definable in an $o$-minimal structure over $\R$,

\item If $A$ satisfies condition $(SSP)$ and $h$ 
is a $C^1-$diffeomorphism.
\end{enumerate}
\end{example}


\subsection{Condition semiline-(SSP)}\label{lSSP}
Our general purpose is to provide a large class of examples of homeomorphisms which 
preserve the condition (SSP).
In this subsection we introduce the condition
semiline-$(SSP)$, and we use it to give some characterisations
of the condition $(SSP)$.
In particular, in the bi-Lipschitz case, we prove that the condition 
semiline-(SSP) is equivalent to 
preserving 
the condition (SSP) (Corollary \ref{linecor2}). Furthermore we prove that a 
semiline-(SSP) bi-Lipschitz homeomorphism $h$, induces a ``positive homogeneous'' 
bi-Lipschitz homeomorphism which corresponds the real cones of arbitrary sets $A$ 
and their 
images $h(A)$ (Theorem \ref{samedimension}).
\begin {defn}
We say that a homeomorphism $h : (\R^n,0) \to (\R^n,0)$ satisfies
condition {\em semiline}-(SSP), if $h(\ell)$ has a unique direction
for all semilines $\ell$.
\end{defn}

\begin{rem}
The bi-Lipschitz homeomorphisms satisfying condition semiline-(SSP)
are bi-Lipschitz homeomorphisms which are  Gateaux right differentiable. 
\end{rem}
\begin{prop}\label{semiline}
  Let $h : (\R^n,0) \to (\R^n,0)$  be a bi-Lipschitz 
homeomorphism.
Suppose that $h^{-1}(\tau)$ satisfies condition (SSP) for all semilines $\tau$.
Then $LD(h(\ell))$ is a semiline for all semilines $\ell$, that is $h$ satisfies 
condition
{\em semiline}-(SSP). (In particular $h(\ell)$ satisfies condition (SSP).)

\end{prop}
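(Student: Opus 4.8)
The plan is to argue by contradiction: suppose some semiline $\ell$ has $LD(h(\ell))$ containing (at least) two distinct directions, and derive a contradiction from the hypothesis that every $h^{-1}(\tau)$ satisfies condition $(SSP)$. First I would fix two distinct directions $a, b \in D(h(\ell))$, witnessed by sequences $h(x_m), h(y_m) \in h(\ell)$ with $x_m, y_m \in \ell$, $x_m, y_m \to 0$, and $h(x_m)/\|h(x_m)\| \to a$, $h(y_m)/\|h(y_m)\| \to b$. Let $\tau_a := L(\{a\})$ be the semiline in direction $a$. By hypothesis $h^{-1}(\tau_a)$ satisfies condition $(SSP)$; note also that $\ell$, being a semiline, satisfies condition $(SSP)$ (it is a cone, cf. Remark \ref{remark2}(1)), and $D(\ell)$ is a single point, say the direction of $\ell$.

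The core of the argument is a directional comparison between $\ell$ and $h^{-1}(\tau_a)$ inside $\R^n$. Since $h(x_m) \in h(\ell)$ and $h(x_m)/\|h(x_m)\| \to a \in D(\tau_a) = D(h(h^{-1}(\tau_a)))$, and since $h^{-1}(\tau_a)$ satisfies $(SSP)$, Lemma \ref{directionrel} gives $D(h(h^{-1}(\tau_a))) = D(h(LD(h^{-1}(\tau_a))))$; more usefully, applying the $(SSP)$ property of $h^{-1}(\tau_a)$ relative to the sequence $\{x_m\}$ — after checking $x_m/\|x_m\| \to$ (the direction of $\ell$) $\in D(h^{-1}(\tau_a))$, which holds because $h^{-1}$ maps $\tau_a$-approaching sequences to $\ell$-approaching sequences and $\ell$ has a single direction — I obtain points $w_m \in h^{-1}(\tau_a)$ with $\|x_m - w_m\| \ll \|x_m\|$. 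Then bi-Lipschitzness of $h$ forces $\|h(x_m) - h(w_m)\| \ll \|h(x_m)\|$, with $h(w_m) \in \tau_a$, so $h(x_m)/\|h(x_m)\|$ and $h(w_m)/\|h(w_m)\| = a$ have the same limit; this is consistent so far. The contradiction must instead come from running the same reasoning with $y_m$ and $\tau_a$: since $x_m, y_m$ both lie on the single semiline $\ell$, $h^{-1}(\tau_a)$ sees both sequences, and the $(SSP)$ property produces $h$-images converging to $a$ from both; but $h(y_m)/\|h(y_m)\| \to b \ne a$, contradiction — provided one can legitimately feed $\{y_m\}$ into the $(SSP)$ property of $h^{-1}(\tau_a)$, i.e. provided $y_m/\|y_m\|$ also tends to a direction in $D(h^{-1}(\tau_a))$. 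This is exactly the point where I would invoke that $\ell$ has a unique direction (so $x_m/\|x_m\|$ and $y_m/\|y_m\|$ share the same limit) together with the fact, to be established, that that common direction lies in $D(h^{-1}(\tau_a))$.

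So the key intermediate claim is: the direction of $\ell$ belongs to $D(h^{-1}(\tau_a))$ whenever $a \in D(h(\ell))$. I would prove this by taking the sequence $h(x_m) \in h(\ell)$ with $h(x_m)/\|h(x_m)\| \to a$; since $a \in D(\tau_a)$, and since $\tau_a$ (a semiline, hence a cone) satisfies $(SSP)$, there are points $v_m \in \tau_a$ with $\|h(x_m) - v_m\| \ll \|h(x_m)\|$; bi-Lipschitzness gives $\|x_m - h^{-1}(v_m)\| \ll \|x_m\|$ with $h^{-1}(v_m) \in h^{-1}(\tau_a)$, and since $x_m/\|x_m\| \to$ (direction of $\ell$), we get (direction of $\ell$) $\in D(h^{-1}(\tau_a))$. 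Once this claim is in hand, the contradiction above closes: feeding both $\{x_m\}$ and $\{y_m\}$ into the relative $(SSP)$ of $h^{-1}(\tau_a)$ yields $A$-approximants whose $h$-images converge to $a$, contradicting $b \ne a$. Finally, $LD(h(\ell))$ being a single semiline means $D(h(\ell))$ is a point, which is the defining condition semiline-$(SSP)$; the parenthetical remark that $h(\ell)$ then satisfies $(SSP)$ follows from Remark \ref{remark2}(6), since $h(\ell) \cup \{0\}$ is path connected with $D(h(\ell))$ a point.

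The main obstacle I anticipate is handling the quantifiers in the relative $(SSP)$ hypothesis cleanly — specifically, verifying that the sequences $\{x_m\}, \{y_m\} \subset \ell$ are admissible inputs for the $(SSP)$ property of $h^{-1}(\tau_a)$ (which requires their normalized limits to lie in $D(h^{-1}(\tau_a))$, not merely in $D(\ell)$), and ensuring the $\ll$ estimates propagate through $h$ and $h^{-1}$ in the right direction. Everything else is a routine application of the bi-Lipschitz inequality $\tfrac1L\|u-v\| \le \|h(u)-h(v)\| \le L\|u-v\|$ together with the stated lemmas.
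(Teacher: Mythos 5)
Your argument is correct and follows essentially the same route as the paper's proof: use the $(SSP)$ of the target semiline $\tau_a$ (a cone) to pull a sequence back into $h^{-1}(\tau_a)$ and conclude that the direction of $\ell$ lies in $D(h^{-1}(\tau_a))$, then invoke the hypothesis that $h^{-1}(\tau_a)$ satisfies $(SSP)$ to push the second sequence forward into $\tau_a$ and force the two limit directions to agree. The only differences are cosmetic (contradiction framing, and applying the final $(SSP)$ step directly to $\{y_m\}$ rather than to an intermediate approximant), and your verification of the admissibility of $\{y_m\}$ as input to the relative $(SSP)$ condition is exactly the point the paper's proof also has to establish.
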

\begin{proof}
Indeed, take a semiline $\ell$ and sequences of points
$\{ b_i\}$, $\{ c_i\} \subset \ell$ tending to $0 \in \R^n$ such that 
$LD(h(\{ b_i \} )) = \ell_1$ and  $LD(h(\{ c_i \} )) = \ell_2$,
where $\ell_1$, $\ell_2$ are semilines.
Since $\ell_1$ (resp. $\ell_2$) satisfies condition (SSP), there is
a sequence of points $\{ b_i^{\prime}\}$ with $\{ h(b_i^{\prime})\}
\subset \ell_1$ (resp. $\{ c_i^{\prime}\}$ with $\{ h(c_i^{\prime})\}
\subset \ell_2$) such that
$$
\| h(b_i ) - h(b_i^{\prime}) \| \ll \| h(b_i )\|, \ \| h(b_i^{\prime})\|
\ \ (\text{resp.} \ \| h(c_i ) - h(c_i^{\prime}) \| \ll \| h(c_i )\|, 
\ \| h(c_i^{\prime})\|).
$$ 
It follows that
\begin{equation}\label{sspineq1}
\| b_i - b_i^{\prime} \| \ll \| b_i \|, \ \| b_i^{\prime}\| \ \
(\text{resp.} \ \| c_i - c_i^{\prime}\| \ll \| c_i\|, \ \| c_i^{\prime}\|).
\end{equation}

On the other hand, we have
$$
\{\lim_{i \to \infty} {b_i \over \| b_i \|}\} =
\{\lim_{i \to \infty} {c_i \over \| c_i \|}\} = D(\ell )
$$
and $\{ c_i^{\prime}\} \subset h^{-1}(\ell_2)$. 
By (\ref{sspineq1}), we have
$$
\{\lim_{i \to \infty} {b_i^{\prime} \over \| b_i^{\prime} \|}\} =
\{\lim_{i \to \infty} {b_i \over \| b_i \|}\} = D(\ell )
\subset  D(h^{-1}(\ell_2)). 
$$
Since $h^{-1}(\ell_2)$ satisfies condition (SSP), there is a sequence
of points $\{ c_i''\}$ with $h(\{ c_i''\}) \subset \ell_2$ such that 
$$
\| b_i^{\prime} - c_i''\| \ll \| b_i^{\prime}\|, \ \| c_i''\|.
$$ 
This implies that
$$
D(\ell_1 ) = 
\{\lim_{i \to \infty} {h(b_i^{\prime}) \over \| h(b_i^{\prime})\|}\}
= \{\lim_{i \to \infty} {h(c_i'') \over \| h(c_i'') \|}\} = D(\ell_2),
$$
that is $\ell_1=\ell_2$.
\end{proof}

We have the following corollaries.

\begin{cor}
In the case of a bi-Lipschitz homeomorphism, the condition semiline-(SSP) is 
equivalent 
with asking that $h(\ell)$ satisfies condition (SSP) for all semilines $\ell$. 
Moreover 
in the bi-Lipschitz  case it follows that $h$ satisfies condition semiline-(SSP) is 
equivalent to $h^{-1}$ satisfies condition semiline-(SSP).
\end{cor}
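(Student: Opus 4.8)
The plan is to obtain both equivalences as a purely formal consequence of Proposition~\ref{semiline} and item~(6) of Remark~\ref{remark2}, applied to $h$ and to $h^{-1}$. First I would isolate the easy half. If $h$ satisfies semiline-$(SSP)$ and $\ell$ is a semiline, then $\ell\cup\{0\}$ is path connected, hence so is $h(\ell)\cup\{0\}=h(\ell\cup\{0\})$ since $h$ is a homeomorphism with $h(0)=0$; moreover $D(h(\ell))$ is a single point by the definition of semiline-$(SSP)$, and $0\in\overline{h(\ell)}$. Thus Remark~\ref{remark2}(6) yields that $h(\ell)$ satisfies $(SSP)$. Call this implication $(\mathrm P)$ for $h$; of course it holds equally for $h^{-1}$.

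Next I would reread Proposition~\ref{semiline} as the converse statement. Its hypothesis, ``$h^{-1}(\tau)$ satisfies $(SSP)$ for every semiline $\tau$,'' is precisely the assertion that $h^{-1}$ carries every semiline to a set satisfying $(SSP)$, and its conclusion is that $h$ satisfies semiline-$(SSP)$. Applying it once to $h$ and once to the bi-Lipschitz homeomorphism $h^{-1}$ (in place of $h$) gives two implications: $(\mathrm Q_1)$ if $h^{-1}(\tau)$ satisfies $(SSP)$ for all semilines $\tau$ then $h$ satisfies semiline-$(SSP)$; and $(\mathrm Q_2)$ if $h(\tau)$ satisfies $(SSP)$ for all semilines $\tau$ then $h^{-1}$ satisfies semiline-$(SSP)$.

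Finally I would chain these. Assume $h(\ell)$ satisfies $(SSP)$ for all semilines $\ell$; by $(\mathrm Q_2)$, $h^{-1}$ satisfies semiline-$(SSP)$; by $(\mathrm P)$ applied to $h^{-1}$, $h^{-1}(\ell)$ satisfies $(SSP)$ for all semilines $\ell$; by $(\mathrm Q_1)$, $h$ satisfies semiline-$(SSP)$. Combined with $(\mathrm P)$ this proves the first equivalence. For the second, if $h$ satisfies semiline-$(SSP)$ then $(\mathrm P)$ followed by $(\mathrm Q_2)$ gives that $h^{-1}$ satisfies semiline-$(SSP)$, and the reverse implication is obtained by exchanging the roles of $h$ and $h^{-1}$. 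I do not anticipate a genuine difficulty: the whole content already resides in Proposition~\ref{semiline} and Remark~\ref{remark2}(6), and the only point requiring attention is to keep track of which of $h$, $h^{-1}$ each implication is applied to, using that ``$g^{-1}(\tau)$ satisfies $(SSP)$ for all semilines $\tau$'' is the same as ``$g$ sends every semiline to a set satisfying $(SSP)$.''
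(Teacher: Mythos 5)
Your proposal is correct and follows essentially the same route as the paper: the published proof also chains Proposition~\ref{semiline} applied first to $h^{-1}$ and then to $h$, using in between the fact that a set with a unique direction (the image of a semiline under a semiline-$(SSP)$ map) satisfies $(SSP)$. The only difference is presentational — you spell out via Remark~\ref{remark2}(6) the implication the paper leaves as the parenthetical ``(In particular $h(\ell)$ satisfies condition (SSP))'' in Proposition~\ref{semiline}.
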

\begin{proof}

 Indeed assume that $h(\ell)$ satisfies condition (SSP) for all semilines $\ell$. 
From 
the result above it follows that $h^{-1}$  satisfies  condition semiline-(SSP), and 
therefore it satisfies condition (SSP) as well. This  in turn shows that $h$ 
satisfies 
condition  semiline-(SSP). 
 
 \end{proof}

\begin{cor}\label{linecor1}
Let $h : (\R^n,0) \to (\R^n,0)$  be a bi-Lipschitz homeomorphism,
and let $A \subset \R^n$ be a set-germ at $0\in \R^n$ such that 
$0 \in \overline{A}$.
Suppose that $A$ satisfies condition (SSP), and $h$ satisfies condition 
semiline-(SSP).
Then $h(A)$ satisfies condition (SSP).
\end{cor}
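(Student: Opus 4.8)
The plan is to deduce Corollary \ref{linecor1} directly from Theorem \ref{2SSP} together with the characterisation of condition semiline-(SSP) worked out in this subsection. The strategy in brief: since $A$ satisfies condition $(SSP)$, Theorem \ref{2SSP} (applied with $B = \R^n$) tells us that $h(A)$ satisfies condition $(SSP)$ if and only if $h(LD(A))$ satisfies condition $(SSP)$. So it suffices to show that $h(LD(A))$ satisfies condition $(SSP)$, and here is where the semiline-$(SSP)$ hypothesis on $h$ enters: $LD(A)$ is a union (possibly infinite) of semilines $\ell$ through the origin, and $h$ maps each such semiline to a set $h(\ell)$ with a unique direction.

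First I would make precise the reduction. By Theorem \ref{2SSP} with $B=\R^n$, $h(A)$ satisfies $(SSP)$ iff $h(LD(A))$ does. Next, take an arbitrary sequence $\{y_m\}\subset\R^n$ tending to $0$ with $\lim_{m\to\infty} y_m/\|y_m\| \in D(h(LD(A)))$. By Lemma \ref{directionrel} (using that $A$ satisfies $(SSP)$), $D(h(LD(A))) = D(h(A))$, and by Lemma \ref{presdir} we also have $D(h(LD(A))) = D(h(A))$, so the limiting direction $a := \lim y_m/\|y_m\|$ lies in $D(h(A))$. One then finds a direction $v\in D(LD(A)) = D(A)$ such that $a \in D(h(L(\{v\})))$; this is possible because $D(h(LD(A)))$ is the closure of $\bigcup_{v\in D(A)} D(h(L(\{v\})))$ — every direction in the image tangent cone is approximated by directions coming from individual semilines in $LD(A)$. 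Using that $h$ satisfies condition semiline-$(SSP)$, each $h(L(\{v\}))$ has a unique direction, hence satisfies condition $(SSP)$ (this is precisely the content recorded in Proposition \ref{semiline} and its first corollary, via Remark \ref{remark2}(6) — a path-connected set with one-point direction set satisfies $(SSP)$). The cone $LD(h(L(\{v\}))) = L(a')$ for the unique direction $a'$ of $h(L(\{v\}))$ is a semiline, and since cones satisfy $(SSP)$ (Remark \ref{remark2}(1)), we can pull points of $\{y_m\}$ onto $h(L(\{v\}))$ up to negligible error; composing with the $(SSP)$ property of $h(L(\{v\}))$ itself and then applying $h^{-1}$ and $h$ (bi-Lipschitz) in the now-familiar telescoping manner — exactly the chain of $\ll$-inequalities used in the proofs of Theorem \ref{2SSP} and Proposition \ref{conicclosure} — produces a sequence in $h(LD(A))$ that is $\ll\|y_m\|$-close to $\{y_m\}$. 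This shows $h(LD(A))$ satisfies $(WSSP)$, hence $(SSP)$ by Proposition \ref{SSPWSSP}, and by the reduction $h(A)$ satisfies $(SSP)$.

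The step I expect to be the main obstacle is the passage from the single-semiline statement "each $h(\ell)$ has a unique direction and satisfies $(SSP)$" to the statement about the whole cone $LD(A) = \bigcup_\ell \ell$: one must control the approximation uniformly enough that a sequence $\{y_m\}$ whose directions may drift across many different semilines $h(\ell)$ can still be matched. The clean way around this is to not insist on uniformity but to work sequentially: pass to a subsequence so that the directions stabilise, reduce to matching against a single $h(\ell_0)$, use that $h(\ell_0)$ satisfies $(SSP)$, and then invoke Proposition \ref{SSPWSSP} to upgrade the resulting subsequential ($(WSSP)$) conclusion back to the full $(SSP)$ conclusion. This is the same device already exploited in the proof of Theorem \ref{2SSP}, so no genuinely new idea is required; the corollary is essentially a packaging of Theorem \ref{2SSP} and Proposition \ref{semiline}.
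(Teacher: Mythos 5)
Your argument is correct in substance but follows a genuinely different route from the paper's. The paper proves the statement directly from the definition: it takes a test sequence $\{b_i\}$ whose image has direction on a semiline $\ell\subset LD(h(A))$, approximates $\{h(b_i)\}$ by points of $\ell$ (a cone, hence (SSP)), pulls back by $h^{-1}$, invokes Proposition \ref{semiline} for $h^{-1}$ to see that $LD(h^{-1}(\ell))$ is a single semiline $\ell_1\subset LD(A)$, approximates the pulled-back points by points of $A$ using the (SSP) of $A$, and pushes forward. You instead first reduce to the cone via Theorem \ref{2SSP} with $B=\R^n$ and then push forward individual semilines of $LD(A)$, using that each $h(L(\{v\}))$ is path-connected with a one-point direction set and hence satisfies (SSP) by Remark \ref{remark2}(6). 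Both routes use the semiline-(SSP) hypothesis in essentially equivalent ways; yours buys a cleaner conceptual reduction at the price of invoking the heavier Theorem \ref{2SSP}, while the paper's is self-contained and exhibits the approximating sequence inside $h(A)$ explicitly.

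One step of yours needs tightening. You assert that for $a\in D(h(LD(A)))$ one can find $v\in D(A)$ with $a\in D(h(L(\{v\})))$ ``because $D(h(LD(A)))$ is the closure of $\bigcup_{v}D(h(L(\{v\})))$''. Membership in a closure does not give membership in the union, so as written this does not produce the required $v$. The correct justification is the sequential device you sketch at the end: pick $z_m\in LD(A)$ with $h(z_m)/\|h(z_m)\|\to a$, pass to a subsequence with $z_m/\|z_m\|\to v\in D(A)$, use the (SSP) of the semiline $L(\{v\})$ to replace $z_m$ by $w_m\in L(\{v\})$ with $\|z_m-w_m\|\ll\|z_m\|,\ \|w_m\|$, and conclude from bi-Lipschitzness that $h(w_m)/\|h(w_m)\|\to a$, i.e. $a\in D(h(L(\{v\})))$. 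With that repair the rest goes through: $h(L(\{v\}))$ satisfies (SSP), so the test sequence is approximated by points of $h(L(\{v\}))\subset h(LD(A))$, giving (WSSP) and hence (SSP) for $h(LD(A))$ by Proposition \ref{SSPWSSP}, and Theorem \ref{2SSP} finishes the argument.
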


\begin{proof}
Let $\ell$ be an arbitrary semiline contained in $LD(h(A))$.
Then there is a sequence of points $\{ a_i \} \subset A$
tending to $0 \in \R^n$ such that $LD(\{ h(a_i)\}) = \ell$.
Since $\ell$ satisfies condition (SSP), there is a sequence of points
$\{ c_i \}$ with $\{ h(c_i)\} \subset \ell$ such that
$$
\| h(a_i) - h(c_i)\| \ll \| h(a_i)\|, \ \|h(c_i)\|.
$$
It follows that 
$$
\| a_i - c_i\| \ll \| a_i\|, \ \| c_i\|.
$$
Therefore we have $LD(\{ a_i\} ) = LD(\{ c_i\}) \subset LD(h^{-1}(\ell))$.
We can use the previous proposition to claim that
$LD(\{ a_i\} ) = LD(\{ c_i\}) = LD(h^{-1}(\ell))$ is a semiline 
$\ell_1 \subset LD(A)$. 

Let $\{ b_i\}$ be an arbitrary sequence of points tending to
$0 \in \R^n$ such that $LD(\{ h(b_i)\} ) = \ell \subset LD(h(A))$.
Since $\ell$ satisfies condition (SSP), there is a sequence
of points $\{ b_i^{\prime}\}$ with 
$\{ h(b_i^{\prime})\} \subset \ell$ such that
$$
\| h(b_i) - h(b_i^{\prime})\| \ll \| h(b_i)\| , \
\| h(b_i^{\prime})\|.
$$
It follows that
\begin{equation}\label{sspineq2}
\| b_i - b_i^{\prime}\| \ll \| b_i\| , \ \| b_i^{\prime}\|.
\end{equation}
Note that $\{ b_i^{\prime} \} \subset h^{-1}(\ell )$.
Therefore we have
$$
LD(\{ b_i^{\prime}\} ) = LD(h^{-1}(\ell )) = \ell_1 \subset LD(A).
$$
Since $A$ satisfies condition (SSP), there is a sequence of points
$\{ b_i'' \} \subset A$ such that
\begin{equation}\label{sspineq3}
\| b_i^{\prime} - b_i''\| \ll \| b_i^{\prime}\| , \ \| b_i''\| .
\end{equation}
By (\ref{sspineq2}) and (\ref{sspineq3}), we have
$$
\| b_i - b_i''\| \ll \| b_i\| , \ \| b_i''\|.
$$
It follows that
$$
\| h(b_i) - h(b_i'')\| \ll \| h(b_i)\| , \ \| h(b_i'')\|.
$$
Since $\{ h(b_i'')\} \subset h(A)$, $h(A)$ satisfies
condition (SSP).
\end{proof}

Using the above corollary, we can see the following:

\begin{cor}\label{linecor2}
Let $h : (\R^n,0) \to (\R^n,0)$  be a bi-Lipschitz 
homeomorphism.
Then the following are equivalent:

(1) $h$ has the property that for any set-germ at $0\in \R^n, A \subset \R^n$
such that $0 \in 
\overline A,$ we have that $ A$ satisfies condition (SSP) if and only if h(A) 
satisfies 
condition (SSP).

(2) $h$ ( so $h^{-1}$ ) satisfies condition semiline-(SSP).
\end{cor}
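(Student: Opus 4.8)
The plan is to prove the equivalence (1) $\Leftrightarrow$ (2) by reducing everything to Corollary \ref{linecor1} and its symmetric counterpart for $h^{-1}$.

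First I would establish the direction (2) $\Rightarrow$ (1). Assume $h$ satisfies condition semiline-(SSP). By the first corollary above, $h^{-1}$ also satisfies condition semiline-(SSP). Now given any set-germ $A$ at $0$ with $0 \in \overline{A}$: if $A$ satisfies condition (SSP), then Corollary \ref{linecor1} applied to $h$ gives that $h(A)$ satisfies condition (SSP); conversely, if $h(A)$ satisfies condition (SSP), then Corollary \ref{linecor1} applied to $h^{-1}$ (with the set-germ $h(A)$, so that $h^{-1}(h(A)) = A$) gives that $A$ satisfies condition (SSP). This yields (1).

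For the direction (1) $\Rightarrow$ (2), I would argue by contraposition, or directly: suppose $h$ has property (1). Take an arbitrary semiline $\ell$. Since $\ell$ is a half-cone, it satisfies condition (SSP) by Remark \ref{remark2}(1) (the cone $LD(\ell) = \ell$ satisfies (SSP)). Applying property (1) to $A = \ell$, we conclude that $h(\ell)$ satisfies condition (SSP). Since this holds for every semiline $\ell$, the first corollary above tells us that $h$ satisfies condition semiline-(SSP). (Alternatively one can invoke Proposition \ref{semiline}: from $h(\ell)$ satisfying (SSP) for all $\ell$, apply the argument to $h^{-1}$ to get $h^{-1}$ is semiline-(SSP), hence so is $h$.) This completes the proof.

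The main obstacle, such as it is, lies in the (1) $\Rightarrow$ (2) direction: one must be careful that the hypothesis of property (1) is genuinely being used with $A$ ranging over \emph{all} set-germs, and in particular over semilines, which is exactly the source of the defining condition for semiline-(SSP). The real content has already been packaged into Corollary \ref{linecor1} and the first corollary above, so the present statement is essentially a clean restatement; the work consists only in correctly pairing the two implications of the biconditional in (1) with applications of Corollary \ref{linecor1} to $h$ and to $h^{-1}$ respectively, and in observing that semilines satisfy condition (SSP) because they are half-cones.
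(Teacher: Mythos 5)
Your proposal is correct and follows exactly the route the paper intends: the paper gives no explicit proof, stating only that the corollary follows from Corollary \ref{linecor1}, and your argument supplies precisely that — (2) $\Rightarrow$ (1) by applying Corollary \ref{linecor1} to $h$ and to $h^{-1}$ (using the earlier corollary that semiline-(SSP) for $h$ and for $h^{-1}$ are equivalent in the bi-Lipschitz case), and (1) $\Rightarrow$ (2) by noting that semilines, being half-cones, satisfy (SSP), so $h(\ell)$ satisfies (SSP) for every semiline and the characterisation after Proposition \ref{semiline} applies.
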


\begin{rem}\label{inducedmap}
Take a germ of a semiarc  $\gamma : ([0,\epsilon),0) \to (\R^n,0)$ 
with a unique direction, say $\ell = LD(\gamma)$. 
(It is not difficult to see that $\gamma$ satisfies condition (SSP).) 
It follows from Proposition \ref{semiline} that for a bi-Lipschitz
homeomorphism $h : (\R^n,0) \to (\R^n,0)$ where $h^{-1}$ satisfies condition 
semiline-(SSP), we do have that $h(\gamma)$ has also a unique direction. 
Indeed, we can easily see that $LD(h(\gamma))=LD(h(LD(\gamma)))=LD(h(\ell ))$ is 
also a semiline. 
Let
$$
\mathscr{SL} := \{\gamma :([0,\epsilon),0) \to (\R^n,0) \ | \
LD(\gamma) \ \text{is a semiline}\}.
$$
The above argument implies that if $h^{-1}$ satisfies condition
semiline-(SSP), then the map $h : \mathscr{SL} \to \mathscr{SL}$
induces a map $\overline{h} : S^{n-1} \to S^{n-1}$ defined by
$\overline{h}(D(\gamma )) = D(h(\gamma ))$ for
$\gamma \in \mathscr{SL}$. 
If both $h, h^{-1}$ satisfy condition semiline-(SSP),
then $\overline{h} : S^{n-1} \to S^{n-1}$ is a one-to-one correspondence,
in other words, $\overline{h} : S^{n-1} \to S^{n-1}$ is bijective. 

Note that in the case where $\gamma :([0,\epsilon),0) \to (\C^n,0), \
\gamma \in \mathscr{SL}$, we have that the complex tangent cone, $LD^*(\gamma)$,
 is a complex line, 
and all complex lines can be obtained in this way (see \ref{CSSP} for a definition of the complex tangent cone).

\end{rem}

\begin{thm}\label{samedimension}
Let $h: (\R^n,0) \to (\R^n,0)$ be a bi-Lipschitz homeomorphism
such that $h$ ( so $h^{-1}$)  satisfies condition semiline-(SSP). 
Then the induced map $\overline{h} : S^{n-1} \to S^{n-1}$ given
in Remark \ref{inducedmap}  extends to  a bi-Lipschitz homeomorphism  
$\overline h:\R^n\to \R^n$, and for any set-germ at $0\in \R^n, A \subset \R^n$
  such that $0 \in \overline A,$ we have 
$\overline{h}(D(A))=D(h(LD(A)))=D(h(A))$.
In particular we have $\dim\, D(A) = \dim\, D(h(A)).$
\end{thm}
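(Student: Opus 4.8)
The plan is to build $\overline h$ directly as a positively homogeneous map and verify it has all the required properties. First I would use Remark \ref{inducedmap}: since both $h$ and $h^{-1}$ satisfy condition semiline-(SSP), the assignment $\ell \mapsto LD(h(\ell))$ carries semilines to semilines bijectively, giving a bijection $\overline h : S^{n-1} \to S^{n-1}$ via $\overline h(a) = D(h(L(\{a\})))$. I would then extend $\overline h$ to all of $\R^n$ by positive homogeneity: set $\overline h(0)=0$ and $\overline h(ta) = t\,\overline h(a)$ for $t\ge 0$, $a\in S^{n-1}$ — but this only works once we know the ``radial speed'' is controlled, so more precisely I would \emph{define} $\overline h(x)$ for $x=\|x\|a \neq 0$ by $\overline h(x) := \|x\|\,\overline h(a)$, i.e. I normalize the radial component to be the identity. (Any genuinely bi-Lipschitz radial reparametrization could be absorbed, but normalizing is cleaner and suffices for the statement, which only concerns directions and dimensions.)

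The next step is to show $\overline h$ so defined is bi-Lipschitz. Let $L\ge 1$ be a bi-Lipschitz constant for $h$. The key estimate I expect to need is that for two unit vectors $a,b$, the angular distance between $\overline h(a)$ and $\overline h(b)$ is comparable to that between $a$ and $b$, with constants depending only on $L$. To get this, take a point $x$ on the semiline through $a$ and $y$ on the semiline through $b$ with $\|x\|=\|y\|=r$; then $\|h(x)-h(y)\| \le L\|x-y\|$ and $\|h(x)-h(y)\| \ge L^{-1}\|x-y\|$, while $\|h(x)\|, \|h(y)\|$ are each between $L^{-1}r$ and $Lr$. Dividing by $r$ and letting $r\to 0$ along the sequences defining the directions, one controls $\|\overline h(a)-\overline h(b)\|$ (after the radial normalization) from above and below by a constant multiple of $\|a-b\|$. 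Combined with the radial normalization $\|\overline h(x)\| = \|x\|$, this yields the two-sided Lipschitz bound $c^{-1}\|x-y\| \le \|\overline h(x)-\overline h(y)\| \le c\|x-y\|$ for a constant $c=c(L)$; bijectivity is already in hand, and continuity of the inverse follows from the lower bound, so $\overline h$ is a bi-Lipschitz homeomorphism of $\R^n$.

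Then I would prove the identity $\overline h(D(A)) = D(h(LD(A))) = D(h(A))$. The equality $D(h(LD(A)))=D(h(A))$ is not automatic in general, but here it follows from Lemma \ref{directionrel} together with Corollary \ref{linecor1} (or directly from Corollary \ref{linecor2}): since $h$ satisfies semiline-(SSP), $h$ preserves condition (SSP), and $LD(A)$ always satisfies (SSP), so $h(LD(A))$ satisfies (SSP); then Lemma \ref{directionrel} gives $D(h(LD(A)))=D(h(A))$ provided one also checks $D(h(A))\subseteq D(h(LD(A)))$, which is the general inclusion in that lemma. For $\overline h(D(A)) = D(h(LD(A)))$: if $a\in D(A)$ with $x_i\in A$, $x_i\to 0$, $x_i/\|x_i\|\to a$, then the semiline $\ell=L(\{a\})$ lies in $LD(A)$, so $D(h(\ell)) = \overline h(a) \subseteq D(h(LD(A)))$; conversely any direction in $D(h(LD(A)))$ is a limit of directions $h(tb)/\|h(tb)\|$ for $b\in D(A)$, $t\to 0$, and since $LD(A)$ is a finite... (in general, using that $LD(A)$ is a cone and $\overline h$ is continuous) one identifies this limit with $\overline h(b)$ for some $b\in D(A)$; continuity of $\overline h$ closes the argument, using also property (4) of the direction-set list, $\overline{\cup_i D(U_i)} = D(\cup U_i)$ for cones. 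Finally $\dim D(A) = \dim D(h(A))$ is immediate, since $\overline h$ restricts to a bi-Lipschitz (indeed just bi-Hölder would suffice) homeomorphism $D(A) \to D(h(A))$ between subsets of $S^{n-1}$, and bi-Lipschitz maps preserve Hausdorff (hence topological) dimension.

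The main obstacle I anticipate is the two-sided Lipschitz estimate for $\overline h$ on $S^{n-1}$ — specifically, extracting the \emph{lower} bound $\|\overline h(a)-\overline h(b)\| \gtrsim \|a-b\|$ uniformly, since naively the direction sets only record limits and one must rule out that $h$ collapses angular separation in the limit. This is where injectivity of $h$ on shrinking spheres, quantified via the lower bi-Lipschitz bound of $h$ and the comparability $\|h(x)\|\asymp\|x\|$, must be used carefully: the point is that $\|h(x)-h(y)\| \ge L^{-1}\|x-y\|$ together with $\|h(x)\|,\|h(y)\| \le Lr$ forces the chord between the normalized images to be bounded below, and one must check this passes to the limit along the defining sequences rather than degenerating. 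Once that uniform bi-Lipschitz-on-the-sphere estimate is secured, the homogeneous extension and the dimension conclusion are routine.
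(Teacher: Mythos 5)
Your construction is essentially the paper's: define $\overline h$ on $S^{n-1}$ by $\overline h(a)=\lim_{t\to 0^+}h(ta)/\|h(ta)\|$, extend it radially by $\overline h(tx)=t\,\overline h(x)$, get the upper Lipschitz bound from the chord estimate $\bigl\| \tfrac{h(ta)}{\|h(ta)\|}-\tfrac{h(tb)}{\|h(tb)\|}\bigr\| \le \tfrac{\|h(ta)-h(tb)\|}{\min(\|h(ta)\|,\|h(tb)\|)}\le \tfrac{K_2}{K_1}\|a-b\|$, and prove $\overline h(D(A))=D(h(LD(A)))=D(h(A))$ by writing $LD(A)=\cup_{a\in D(A)}\ell_a$ and invoking Lemma \ref{directionrel} and Corollary \ref{linecor1}. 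So the route is the same; the issue is with one step.

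The step that fails as written is your direct derivation of the lower Lipschitz bound for $\overline h$ on the sphere. The inequalities $\|h(x)-h(y)\|\ge K_1\|x-y\|$ and $K_1 r\le\|h(x)\|,\|h(y)\|\le K_2 r$ do \emph{not} force the normalized images $h(x)/\|h(x)\|$ and $h(y)/\|h(y)\|$ apart, because the displacement $h(x)-h(y)$ may be essentially radial: if $h(x)$ and $h(y)$ lie on the same ray at distances $K_1r$ and $K_2r$, then $\|h(x)-h(y)\|=(K_2-K_1)r\ge K_1\|x-y\|$ whenever $\|a-b\|\le K_2/K_1-1$ (so for all $a,b$ once $K_2/K_1\ge 3$), yet the normalized images coincide. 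Hence no pointwise angular lower bound survives the limit this way. The correct fix --- which is what the paper's terse ``therefore $\bar h$ is bi-Lipschitz'' relies on --- is symmetry: $h^{-1}$ also satisfies semiline-(SSP), so the \emph{upper} estimate applied to $h^{-1}$ makes $\overline{h^{-1}}$ Lipschitz with constant $K_2/K_1$, and since $\overline{h^{-1}}=\overline h^{\,-1}$ on $S^{n-1}$ (bijectivity from Remark \ref{inducedmap}, plus $LD(h^{-1}(LD(h(\ell))))=\ell$ from Proposition \ref{semiline}), this is exactly the lower bound $\|\overline h(a)-\overline h(b)\|\ge \tfrac{K_1}{K_2}\|a-b\|$. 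The same symmetry cleans up your slightly vague closing step for $D(h(LD(A)))\subseteq\overline h(D(A))$: rather than identifying a limit of directions with some $\overline h(b)$ by continuity/compactness, prove only $\overline h(D(A))\subseteq D(h(LD(A)))=D(h(A))$ directly (using the inclusion $\cup_a LD(h(\ell_a))\subseteq LD(h(LD(A)))$) and obtain the reverse inclusion by applying the same inclusion to $h^{-1}$ and $h(A)$. With those two repairs your argument coincides with the paper's proof.
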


\begin{proof} First we  prove the result for $A$ which satisfies condition  (SSP). 
Indeed $D(A)=D(LD(A))$ and the latest satisfies condition (SSP).
 
Let us put $\ell_a := \{ta \ | \ t \geq 0\}$ 
for $a \in S^{n-1}$. 
Then we have $LD(A) = \cup_{a\in D(A)} \ell_a$.
Let us assume that $A$ satisfies condition (SSP), 
then we have the following:

\begin{eqnarray*}
\overline{h}(D(A))
&=& (\cup_{a\in D(A)} LD(h(\ell_a))) \cap S^{n-1} \\
&\subseteq& LD(\cup_{a\in D(A)} h(\ell_a)) \cap S^{n-1} \\
&=& LD(h(\cup_{a\in D(A)} \ell_a)) \cap S^{n-1} \\
&=& LD(h(LD(A))) \cap S^{n-1} = D(h(A)).
\end{eqnarray*}

By Corollary \ref{linecor1}, $h(A)$ also satisfies condition (SSP).
Using the same argument as above, we have 
$$
(\overline{h})^{-1}(D(h(A))) \subset D(h^{-1}(h(A))) = D(A).
$$ 
It follows that 
$$
D(h(A)) \subset \overline{h}(D(A)) \subset D(h(A)).
$$
Therefore we have $\overline{h}(D(A)) = D(h(A))$.

Since $h : (\R^n,0) \to (\R^n,0)$ is a bi-Lipschitz homeomorphism,
there are positive numbers $K_1, \ K_2 \in \R$
with $0 < K_1 \le K_2$, called {\em Lipschitz constants}, such that
$$
K_1 \| x_1 - x_2 \| \le \| h (x_1) - h (x_2) \|
\le K_2 \| x_1 - x_2 \|
$$
in a small neighbourhood of $0 \in \R^n$.
Let $\overline{h} : S^{n-1} \to S^{n-1}$, $S^{n-1} \subset \R^n$,
be the mapping defined by
$$
\overline{h}(a) = \lim_{t \to 0} {h(ta) \over \| h(ta) \|}.
$$
Let $a, b \in S^{n-1}$.
Then for sufficiently small, arbitrary $t > 0$, we have
\[\begin{array}{llll}
\| {h(ta) \over \| h(ta)\|} - {h(tb) \over \| h(tb)\|} \|
&\leq {\|h(ta) - h(tb)\| \over \min(\|h(ta)\|,\|h(tb)\|)}
 \end{array}
\]
\[\begin{array}{llll}
 {\|h(ta) - h(tb)\| \over \min(\|h(ta)\|,\|h(tb)\|)}
&\leq {K_2 \| ta - tb\| \over \min(K_1 \|ta\|,K_1 \|tb\|)}
\leq {K_2 \over K_1} \| a - b\| .
 \end{array}
\]

\medskip
Taking the limit as $t \to 0^+$,
we have $\| \bar{h}(a)-\bar{h}(b)\| \leq {K_2 \over K_1}
\| a - b \|$.
Therefore it follows that $\bar{h}$ is a  bi-Lipschitz
homeomorphism.
It is not difficult to extend $\overline h$ to a global bi-Lipschitz homeomorphism, 
we 
put $\overline h(tx)
=t\overline h(x), \, x\in S^{n-1}$ (its radial extension).

Our proof  shows that in fact  $D(h(A))\subseteq D(h(LDA))=\overline{h}(D(A))$  for 
any 
$A$. Because 
$\overline {h}^{-1}=\overline{h^{-1}}$,  the equality 
$D(h(A))=D(h(LDA))=\overline{h}(D(A))$ holds in general.

\end{proof}
\begin{rem}
In particular the above property holds for any definable bi-Lipschitz 
homeomorphism, 
and 
for any subanalytic bi-Lipschitz homeomorphism (for the subanalytic case see
\cite {andreas}).
\end{rem}
\begin{rem}The assumption on $h$ cannot be much relaxed. 
Indeed,  consider a bi-Lipschitz zig-zag  homeomorphism $h: \R \to \R$ 
(in particular it preserves the $(SSP)$ property) whose graph is like 
in example \ref{example31}, Figure 3 below. 
Then $H := 1 \times h :\R \times \R \to \R \times \R$ is a bi-Lipschitz 
homeomorphism and for the semiline $ \ A = \{(t,t) \ | \ t\geq 0\}, \, H(A)$ is 
exactly 
that part of the graph of $h$ which is a zigzag.
Therefore $\dim D(H(A)) = 1$ (even $A$ satisfies condition (SSP)),
but $D(A)$ is only a point. Clearly $H$ does not satisfy semiline-$(SSP)$.
\end{rem}

\begin{cor}
Let $h: (\R^n,0) \to (\R^n,0)$ be a bi-Lipschitz homeomorphism
such that $h$ ($ h^{-1}$)  satisfies condition semiline-(SSP), and let 
$A\subset \R^n$ be a set-germ at $0\in \R^n$
such that $0\in \overline A$. 
Then $LD(A)$ and $LD(h(A))$ are bi-Lipschitz homeomorphic.

\end{cor}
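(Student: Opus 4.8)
The plan is to read off the result directly from Theorem \ref{samedimension}. That theorem, applied to our $h$ (which together with $h^{-1}$ satisfies condition semiline-(SSP)), produces a globally defined bi-Lipschitz homeomorphism $\overline h : \R^n \to \R^n$, namely the radial extension of the induced sphere map $\overline h : S^{n-1} \to S^{n-1}$. In particular $\overline h$ is positively homogeneous, $\overline h(tx) = t\,\overline h(x)$ for $t \ge 0$ and $x \in S^{n-1}$, and it satisfies $\overline h(D(A)) = D(h(LD(A))) = D(h(A))$ for \emph{every} set-germ $A$ at $0$ with $0 \in \overline A$.

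First I would note that a positively homogeneous homeomorphism carries half-cones to half-cones: for any $B \subseteq S^{n-1}$ one has $\overline h(L(B)) = L(\overline h(B))$. Indeed, if $a \in B$ and $t \ge 0$ then $\overline h(ta) = t\,\overline h(a) \in L(\overline h(B))$ since $\overline h(a) \in S^{n-1}$, and conversely every point $s\,\overline h(a)$ with $a \in B$, $s \ge 0$ equals $\overline h(sa)$. Applying this with $B = D(A)$ and invoking the displayed identity from Theorem \ref{samedimension} gives
$$
\overline h(LD(A)) = \overline h(L(D(A))) = L(\overline h(D(A))) = L(D(h(A))) = LD(h(A)).
$$

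Finally, since $\overline h : \R^n \to \R^n$ is bi-Lipschitz, the Lipschitz bounds $K_1 \| x - y \| \le \| \overline h(x) - \overline h(y) \| \le K_2 \| x - y \|$ hold for all points, hence in particular for all $x, y \in LD(A)$; thus $\overline h|_{LD(A)} : LD(A) \to LD(h(A))$ is a bi-Lipschitz homeomorphism, which is the claim.

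I do not expect any genuine obstacle: all the hard work is already contained in Theorem \ref{samedimension}. The single point worth a word of care is that the equality $\overline h(D(A)) = D(h(A))$ there is valid for \emph{arbitrary} $A$, not merely for sets satisfying condition (SSP); this is exactly what the closing paragraph of that theorem records, using $\overline h^{-1} = \overline{h^{-1}}$ together with the fact that $h^{-1}$ also satisfies condition semiline-(SSP).
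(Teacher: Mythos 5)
Your proof is correct and follows essentially the same route as the paper: the paper's own argument simply invokes Theorem \ref{samedimension} and the radial extension of $\overline h$, and you have merely written out the routine verification that a positively homogeneous bi-Lipschitz map sends $L(D(A))$ onto $L(D(h(A)))$ and restricts to a bi-Lipschitz homeomorphism between these cones.
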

\begin{proof}
Indeed by the previous result we have that $D(A)$ and $D(h(A))$ are bi-Lipschitz 
homeomorphic, and the radial extension of $\overline h$ gives the result.

\end{proof}

\medskip


\subsection{Directional properties of intersection sets}

In this subsection we treat some directional
properties of intersections.
 Even if $A$, $B$ satisfy condition $(SSP)$,
$A \cap B$ does not always satisfy condition $(SSP)$.

\begin{prop}\label{closed cone}
Let $h : (\R^n,0) \to (\R^n,0)$ be a bi-Lipschitz homeomorphism,
and let $U$, $V \subset \R^n$ be closed cones with $0 \in \R^n$
as the vertex.
Suppose that $h(U)$ satisfies condition $(SSP)$.
Then $D(h(U \cap V)) = D(h(U)) \cap D(h(V))$.
\end{prop}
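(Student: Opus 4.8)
The inclusion $D(h(U\cap V)) \subseteq D(h(U))\cap D(h(V))$ is immediate and uses neither the cone hypothesis nor $(SSP)$: since $h$ is a bijection, $h(U\cap V) = h(U)\cap h(V)$, and since $D(U'\cap V')\subseteq D(U')\cap D(V')$ for arbitrary subsets $U',V'$ (property $(5)$ in the list recalled near the start of this section), we obtain $D(h(U)\cap h(V))\subseteq D(h(U))\cap D(h(V))$. So the whole content lies in the reverse inclusion, and the plan is as follows.

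Fix $a \in D(h(U))\cap D(h(V))$. From $a \in D(h(V))$ I would pick a sequence $\{z_m\}\subset h(V)$ with $z_m \to 0$ and $z_m/\|z_m\| \to a$. Since $h(U)$ satisfies condition $(SSP)$ and $a \in D(h(U))$, Definition \ref{SSP} (applied with $B=\R^n$ to the sequence $\{z_m\}$) produces $\{w_m\}\subset h(U)$ with $\|z_m - w_m\| \ll \|z_m\|,\|w_m\|$; in particular $w_m \to 0$ and $w_m/\|w_m\| \to a$ as well. The next move — and this is the real idea — is to transport everything back to the source space via $h^{-1}$, which is again bi-Lipschitz: set $b_m := h^{-1}(w_m) \in U$ and $c_m := h^{-1}(z_m) \in V$, so that the bi-Lipschitz estimates convert $\|z_m - w_m\| \ll \|z_m\|,\|w_m\|$ into $\|b_m - c_m\| \ll \|b_m\|,\|c_m\|$, with $b_m, c_m \to 0$.

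Now $U$ and $V$ are genuine closed cones, which is what makes a rescaling argument possible. From $\|b_m - c_m\| \ll \|b_m\|,\|c_m\|$ one checks directly that $\|b_m/\|b_m\| - c_m/\|c_m\|\| \to 0$; after passing to a subsequence I may assume $b_m/\|b_m\| \to \alpha$ for some $\alpha \in S^{n-1}$, and then also $c_m/\|c_m\| \to \alpha$. Because $U$ and $V$ are closed and contain $b_m/\|b_m\|$, resp. $c_m/\|c_m\|$, the limit $\alpha$ lies in $U\cap V$; and because $U\cap V$ is a cone, the points $d_m := \|b_m\|\,\alpha$ lie in $U\cap V$, tend to $0$, and satisfy $\|b_m - d_m\| = \|b_m\|\,\|b_m/\|b_m\| - \alpha\| \ll \|b_m\| = \|d_m\|$. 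Applying $h$ once more, $\|h(b_m) - h(d_m)\| \ll \|h(b_m)\|,\|h(d_m)\|$; since $h(b_m)=w_m$ and $w_m/\|w_m\|\to a$, it follows that $h(d_m)/\|h(d_m)\| \to a$, while $h(d_m)\to 0$ and $h(d_m)\in h(U\cap V)$. Hence $a \in D(h(U\cap V))$, which is the reverse inclusion. (The same reasoning covers the degenerate cases $U\cap V=\{0\}$ or $U\cap V=\emptyset$, where it shows $D(h(U))\cap D(h(V))$ is empty.)

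The step I expect to be the main obstacle is precisely the one highlighted above: one should not try to build points of $U\cap V$ out of the images $h(U),h(V)$, because those are not cones and $h$ may distort their direction sets wildly; the correct device is to pull the whole configuration back to the source, where the cone structure survives, the common limiting direction $\alpha$ is forced into $U\cap V$ by closedness, and a full comparison sequence $\{d_m\}$ can be placed inside $U\cap V$ by homogeneity. Everything else — the two passages through the bi-Lipschitz inequalities and the observation that $\|x-y\|\ll\|x\|$ forces $x/\|x\|$ and $y/\|y\|$ to share a limit along a subsequence — is routine. Note finally that the hypothesis singles out $h(U)$, not $h(V)$, as the set satisfying $(SSP)$, and this asymmetry is used in an essential way at the very first step.
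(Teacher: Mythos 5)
Your proof is correct and follows essentially the same route as the paper's: fix a common direction $\alpha$, use $(SSP)$ of $h(U)$ to produce nearby points of $h(U)$ and $h(V)$, pull back through the bi-Lipschitz inverse so that the common limiting unit vector is forced into $U\cap V$ by closedness, place a comparison sequence on the resulting semiline inside $U\cap V$ by homogeneity, and push it forward again. The only cosmetic difference is that you build the semiline sequence explicitly as $\|b_m\|\alpha$, where the paper invokes the $(SSP)$ property of the half-line $\tilde\beta$; these are the same step.
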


\begin{proof}
Since the inclusion $\subset$ is obvious,
we show $\supset$ here.
Let $\alpha$ be an arbitrary element of $D(h(U)) \cap D(h(V))$.
Then there is a sequence of points $\{ a_m \} \subset V$
tending to $0 \in \R^n$ such that 
$\lim_{m \to \infty} {h(a_m) \over \| h(a_m) \| } = \alpha$.
Since $h(U)$ has condition $(SSP)$, there is a sequence of points 
$\{ b_m \} \subset U$ tending to $0 \in \R^n$ such that
$$
\| h(a_m) - h(b_m) \| \ll \| h(a_m) \|, \ \| h(b_m) \|. 
$$
It follows that
\begin{equation}\label{seq}
\| a_m - b_m \| \ll \| a_m \|, \ \| b_m \|. 
\end{equation}

On the other hand, there is a subsequence $\{ a_{m_j} \}$
of $\{ a_m \}$ such that
$$
\lim_{{m_j} \to \infty} {a_{m_j} \over \| a_{m_j} \|} 
= \beta \in D(V).
$$
By (\ref{seq}) we have
$$
\lim_{{m_j} \to \infty} {b_{m_j} \over \| b_{m_j} \|} 
= \beta \in D(U).
$$
Since $U$, $V$ are closed cones,
$\beta \in D(U) \cap D(V) \subset U \cap V$.
Let $\tilde{\beta}$ denote the real half line through
$0$ and $\beta$.
Then $\tilde{\beta} \subset U \cap V$.
Note that $\tilde{\beta}$ satisfies condition $(SSP)$.
Therefore there is a sequence of points 
$\{ c_{m_j} \} \subset \tilde{\beta}$ tending to $0 \in \R^n$ such that
$$
\| a_{m_j} - c_{m_j} \| \ll \| a_{m_j} \|, \ \| c_{m_j} \|. 
$$
This implies
$$
\| h(a_{m_j}) - h(c_{m_j}) \| \ll \| h(a_{m_j}) \|, \ \| h(c_{m_j}) \|. 
$$
Thus
$$
\lim_{{m_j} \to \infty} {h(c_{m_j}) \over \| h(c_{m_j}) \|} 
= \lim_{{m_j} \to \infty} {h(a_{m_j}) \over \| h(a_{m_j}) \|} 
= \alpha .
$$
It follows that $\alpha \in D(h(U \cap V))$.
Thus
$D(h(U)) \cap D(h(V)) \subset D(h(U \cap V))$.
\end{proof}

Using a similar argument to the above proposition,
we can generalise it as follows:

\begin{thm}\label{UV}
Let $h : (\R^n,0) \to (\R^n,0)$ be a bi-Lipschitz homeomorphism,
and let $U$, $V \subset \R^n$ be  set-germs at $0\in \R^n$such that 
$0 \in \overline{U} \cap \overline{V}$.
Suppose that $D(U \cap V) = D(U) \cap D(V)$, and 
$U \cap V$ and $h(U)$ satisfy condition $(SSP)$.
Then $D(h(U \cap V)) = D(h(U)) \cap D(h(V))$.
\end{thm}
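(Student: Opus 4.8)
The plan is to imitate the proof of Proposition \ref{closed cone}, replacing the places where the ``closed cone'' hypothesis was used by the two new hypotheses $D(U\cap V)=D(U)\cap D(V)$ and ``$U\cap V$ satisfies condition $(SSP)$''. The inclusion $D(h(U\cap V))\subset D(h(U))\cap D(h(V))$ is immediate from property (5) of the direction set and monotonicity, so the work is entirely in the reverse inclusion. Fix $\alpha\in D(h(U))\cap D(h(V))$. Since $\alpha\in D(h(V))$, choose $\{a_m\}\subset V$ with $a_m\to 0$ and $h(a_m)/\|h(a_m)\|\to\alpha$. Since $h(U)$ satisfies $(SSP)$ and $\alpha\in D(h(U))$, there is $\{b_m\}\subset U$ with $\|h(a_m)-h(b_m)\|\ll\|h(a_m)\|,\|h(b_m)\|$; applying the bi-Lipschitz estimate for $h^{-1}$ gives $\|a_m-b_m\|\ll\|a_m\|,\|b_m\|$.

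Next, pass to a subsequence $\{m_j\}$ so that $a_{m_j}/\|a_{m_j}\|\to\beta$ for some $\beta\in D(V)$; by the estimate just obtained, $b_{m_j}/\|b_{m_j}\|\to\beta$ as well, so $\beta\in D(U)$. Hence $\beta\in D(U)\cap D(V)$, and here we invoke the hypothesis $D(U\cap V)=D(U)\cap D(V)$ to conclude $\beta\in D(U\cap V)$. This is the step that in Proposition \ref{closed cone} was handled by the trivial observation $D(U)\cap D(V)\subset U\cap V$ for closed cones; now it is precisely the assumption we are making. Since $\beta\in D(U\cap V)$ and $U\cap V$ satisfies condition $(SSP)$, and since $a_{m_j}\in V$ with $a_{m_j}/\|a_{m_j}\|\to\beta\in D(U\cap V)$ — but wait, the $a_{m_j}$ lie in $V$, not necessarily in a set for which we know $(SSP)$ relative behaviour. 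To handle this cleanly I would instead use that $\beta$ lies in $D(U\cap V)$ together with condition $(SSP)$ of $U\cap V$ applied to the sequence $\{c_{m_j}\}$ chosen as follows: first use $(SSP)$ of $U\cap V$ to find $\{c_{m_j}\}\subset U\cap V$ approximating the points of $U\cap V$ along direction $\beta$; more directly, since $b_{m_j}\in U$ with $b_{m_j}/\|b_{m_j}\|\to\beta\in D(U\cap V)$ and $U\cap V$ satisfies $(SSP)$, we need $(SSP)$ of $U\cap V$ \emph{relative to} $U$ — which does not follow automatically. The correct route, matching the proposition's logic, is: the sequence $\{a_{m_j}\}\subset V$ has direction $\beta\in D(U\cap V)$, so we would like $(SSP)$ of $U\cap V$ relative to $V$; again not automatic.

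The clean fix — and this is how I expect the authors proceed — is to apply condition $(SSP)$ of $U\cap V$ to produce, for the sequence $\{a_{m_j}\}$, an approximating sequence only after first noting that $(SSP)$ is a statement about \emph{arbitrary} sequences $\{a_m\}$ of $\R^n$ converging to $0$ with direction in $D(U\cap V)$ (the non-relative case, $B=\R^n$). Indeed $U\cap V$ satisfies $(SSP)$ means $(SSP)$ relative to $\R^n$, so applied to $\{a_{m_j}\}\subset V\subset\R^n$ with $a_{m_j}/\|a_{m_j}\|\to\beta\in D(U\cap V)$, there is $\{c_{m_j}\}\subset U\cap V$ with $\|a_{m_j}-c_{m_j}\|\ll\|a_{m_j}\|,\|c_{m_j}\|$. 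Then $\|h(a_{m_j})-h(c_{m_j})\|\ll\|h(a_{m_j})\|,\|h(c_{m_j})\|$ by the bi-Lipschitz property of $h$, whence $h(c_{m_j})/\|h(c_{m_j})\|\to\alpha$; since $c_{m_j}\in U\cap V$, this gives $\alpha\in D(h(U\cap V))$, completing the reverse inclusion. The main obstacle, then, is purely bookkeeping: keeping straight that the relevant instance of $(SSP)$ is the non-relative one (so that the test sequence is allowed to live in the ambient $V$ rather than in $U\cap V$), and verifying that the $\beta\in D(U\cap V)$ step genuinely needs the hypothesis $D(U\cap V)=D(U)\cap D(V)$ and nothing more. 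Once those points are in place the argument is a line-by-line transcription of Proposition \ref{closed cone}.
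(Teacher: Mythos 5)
Your proposal is correct and is exactly the adaptation of Proposition \ref{closed cone} that the paper intends (the paper gives no separate proof, saying only ``using a similar argument to the above proposition''): the hypothesis $D(U\cap V)=D(U)\cap D(V)$ replaces the closed-cone step $\beta\in D(U)\cap D(V)\subset U\cap V$, and the non-relative $(SSP)$ of $U\cap V$ — which by definition applies to \emph{any} sequence in $\R^n$ with direction in $D(U\cap V)$, hence to $\{a_{m_j}\}\subset V$ — replaces the use of the semiline $\tilde\beta$. The self-doubt in your middle paragraph about needing a relative version of $(SSP)$ is unfounded for precisely the reason you then give, so that digression can simply be deleted.
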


\begin{rem}\label{remark21}
We cannot drop any assumption from the above theorem.
\begin{enumerate}
\item  $D(U \cap V) = D(U) \cap D(V)$: 
Let $h : (\R,0) \to (\R,0)$ be the identity map,
and let $V = \{ {1 \over m} \ | \ m \in \N \}$ and 
$U = \R \setminus V$.
Then $D(U \cap V) \ne D(U) \cap D(V)$, and
$U \cap V = \emptyset$ and $h(U)= U$ satisfy condition $(SSP)$.
But $D(h(U) \cap h(V)) \ne D(h(U)) \cap D(h((V))$.

\item  $(SSP)$ of $U \cap V$: Let $h : (\R^2,0) \to (\R^2,0)$ be
the inverse of the slow spiral bi-Lipschitz homeomorphism given 
in Example 3.3 of \cite{koikepaunescu} (see Figure 1 and 
Remark \ref{slowspiral}), 
and let $A$, $B$ be spirals on the source space mapped 
by $h$ to two lines $\ell_1$, $\ell_2$ through the origin 
on the target space, respectively.
We set  $U = A \cup (B \setminus m)$ and $V = A \cup (B \cap m )$,
where $m$ is a half line with $0 \in \R^2$ as an end point.
Then $D(U \cap V) = D(U) \cap D(V) = S^1$, $h(U) = \ell_1 
\cup (\ell_2 \setminus C)$, where $C$ is a sequence of points on 
$\ell_2$ convergent to $0 \in \R^2$, satisfies condition $(SSP)$
and $U \cap V = A$ does not satisfy condition $(SSP)$.
On the other hand, we can see that 
$D(h(U) \cap h(V)) = \ell_1 \cap S^1$ and
$D(h(U)) \cap D(h(V)) = (\ell_1 \cup \ell_2) \cap S^1$.

\item  $(SSP)$ of $h(U)$: Let $h : (\R^2,0) \to (\R^2,0)$ be the zigzag
bi-Lipschitz homeomorphism given in Example 3.4 of \cite{koikepaunescu}, 
and let $U = \{ y = 0 \}$ and $V = \{ y = a x \}$ 
for a sufficiently small positive number $a > 0$.
Then $D(U \cap V) = D(U) \cap D(V) = \emptyset$, $U \cap V =
\{ 0 \}$ satisfies condition $(SSP)$ and $h(U)$ does not satisfy 
condition $(SSP)$ (see Remark 5.4 in \cite{koikepaunescu}).
On the other hand,  we can see that $D(h(U \cap V)) = \emptyset$ 
but $D(h(U)) \cap D(h(V)) \ne \emptyset$.  
\end{enumerate}  
\end{rem}

\begin{rem}\label{example1}(Example 5.2 (2) in \cite{koikepaunescu}.)
Let $T$ be an angle with vertex at $O\in \R^2$.
We choose sequences of points $\{ P_m \}$ and $\{ Q_m \}$ 
on the edges of $T$ such that $\overline{OP_m} = {1 \over m^2}$ and 
$\overline{OQ_m} = {1 \over 2}({1 \over m^2} + {1 \over (m+1)^2})$,
and let $C_2$ be the zigzag curve connecting $P_m$'s and $Q_m$'s.
Then $C_2$ satisfies condition $(SSP)$.


 \begin{figure}[htb]
 \centering
 \includegraphics[width=.3\linewidth]{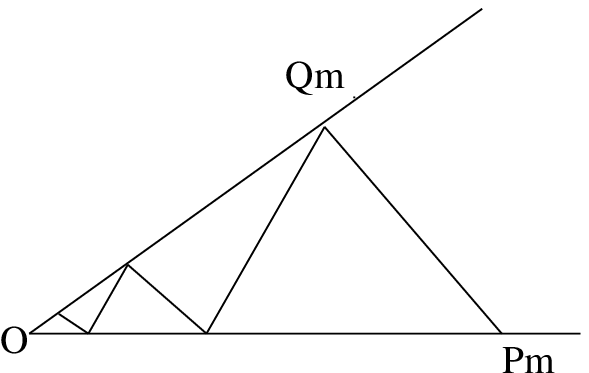}
 \hfil
 \caption{}
 \label{fig:zigzag}
 \end{figure}
Suppose that there are a subanalytic curve $U$ and
a bi-Lipschitz homeomorphism $h : (\R^2,0) \to (\R^2,0)$
such that $h(U) = C_2$ satisfies condition (SSP).
Let $V$ be a half line arbitrarily close to $LD(U)$, therefore close to $U$ as well, such that $U \cap V = \{ 0 \}$ and $D(U \cap V) = D(U) \cap D(V)=\emptyset$. 
On the other hand, $D(h(U \cap V)) = \emptyset$ 
and $D(h(U)) \cap D(h(V)) \neq \emptyset$, as the image $h(V)$ has to be arbitrarily close to the zigzag $C_2$.
By Theorem \ref{UV}, we see 
that $C_2$ cannot be the image of any subanalytic
curve by any bi-Lipschitz homeomorphism.
\end{rem}

\subsection{Directional properties of product sets}

We give some elementary set-theoretical properties 
concerning the  condition $(SSP)$.

\begin{prop}\label{productssp} $(Product)$ 
Let $A \subset \R^m$ be a set-germ at $0\in \R^m$ such that $0\in \overline{A}$ 
and let $B \subset \R^n$ be a set-germ at $0\in \R^n$ such that $0\in \overline{B}$.
Then $A$, $B$ satisfy condition $(SSP)$ at $0 \in \R^m$,
$0 \in \R^n$ respectively if and only if
$A \times B$ satisfies condition $(SSP)$ at 
$(0,0) \in \R^m \times \R^n$.
\end{prop}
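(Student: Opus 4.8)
The plan is to translate condition $(SSP)$ into the distance form of Remark \ref{remark1}(1): a set-germ $C$ at the origin satisfies $(SSP)$ exactly when $d(c_m,C)/\|c_m\|\to 0$ for every sequence $c_m\to 0$, $c_m\neq 0$, with $c_m/\|c_m\|$ converging to a point of $D(C)$. Equipping $\R^m\times\R^n$ with its Euclidean metric, the distance to a product splits,
\[
d\big((a,z),A\times B\big)^2=d(a,A)^2+d(z,B)^2 ,
\]
so, after recording a couple of elementary facts about $D(A\times B)$, the whole statement becomes bookkeeping with norm ratios. The one point that needs care is the case of limiting directions aligned with a single factor; there $(SSP)$ of that factor is not directly applicable and one uses instead $0\in\overline{A}$ (resp.\ $0\in\overline{B}$) to approximate by a point of negligible norm.

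First I would record the facts about $D(A\times B)$ that are needed, writing a point of $S^{m+n-1}\subset\R^m\times\R^n$ as $\gamma=(\gamma^{(1)},\gamma^{(2)})$. If $\gamma\in D(A\times B)$ and $\gamma^{(1)}\neq 0$, then $\gamma^{(1)}/\|\gamma^{(1)}\|\in D(A)$: taking $(x_i,y_i)\subset A\times B$ with $(x_i,y_i)\to 0$ and $(x_i,y_i)/\|(x_i,y_i)\|\to\gamma$, one has $\|x_i\|/\|(x_i,y_i)\|\to\|\gamma^{(1)}\|>0$, so $x_i\neq 0$ eventually and $x_i/\|x_i\|\to\gamma^{(1)}/\|\gamma^{(1)}\|$; symmetrically $\gamma^{(2)}\neq 0$ forces $\gamma^{(2)}/\|\gamma^{(2)}\|\in D(B)$. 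Conversely $(\alpha,0)\in D(A\times B)$ for every $\alpha\in D(A)$: choose $x_i\in A$ with $x_i\to 0$, $x_i/\|x_i\|\to\alpha$, and, using $0\in\overline{B}$, choose $y_i\in B$ with $\|y_i\|\le\|x_i\|/i$; then $(x_i,y_i)/\|(x_i,y_i)\|\to(\alpha,0)$. Likewise $(0,\beta)\in D(A\times B)$ for $\beta\in D(B)$, using $0\in\overline{A}$.

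For the implication ``$A,B$ satisfy $(SSP)$ $\Rightarrow$ $A\times B$ satisfies $(SSP)$'', I would take $w_m=(a_m,z_m)\to 0$, $w_m\neq 0$, with $w_m/\|w_m\|\to\gamma\in D(A\times B)$. If $\gamma^{(1)}\neq 0$, then $\|a_m\|/\|w_m\|\to\|\gamma^{(1)}\|>0$ and $a_m/\|a_m\|\to\gamma^{(1)}/\|\gamma^{(1)}\|\in D(A)$, so $(SSP)$ of $A$ gives $d(a_m,A)/\|a_m\|\to 0$, whence $d(a_m,A)/\|w_m\|\to 0$. If $\gamma^{(1)}=0$, then $\|a_m\|/\|w_m\|\to 0$, and picking $p_m\in A$ with $\|p_m\|\le\|w_m\|/m$ (possible since $0\in\overline{A}$) gives $d(a_m,A)\le\|a_m\|+\|p_m\|$, so again $d(a_m,A)/\|w_m\|\to 0$. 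The same argument applied to the second coordinate (using $(SSP)$ of $B$, or $0\in\overline{B}$) gives $d(z_m,B)/\|w_m\|\to 0$; by the split distance formula $d(w_m,A\times B)/\|w_m\|\to 0$, which is exactly $(SSP)$ for $A\times B$.

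For the converse I would assume $A\times B$ satisfies $(SSP)$ and take $a_m\to 0$, $a_m\neq 0$, in $\R^m$ with $a_m/\|a_m\|\to\alpha\in D(A)$. Attaching $z_m\in\R^n$ with $\|z_m\|\le\|a_m\|/m$ produces $w_m=(a_m,z_m)\to 0$, $w_m\neq 0$, with $\|a_m\|\le\|w_m\|\le\sqrt{2}\,\|a_m\|$ and $w_m/\|w_m\|\to(\alpha,0)\in D(A\times B)$ by the facts above; hence $d(w_m,A\times B)/\|w_m\|\to 0$, and since $d(a_m,A)\le d(w_m,A\times B)$ we get $d(a_m,A)/\|a_m\|\le\sqrt{2}\,d(w_m,A\times B)/\|w_m\|\to 0$, so $A$ satisfies $(SSP)$. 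Exchanging the two factors (and using $0\in\overline{A}$ to attach a small point of the first factor) shows $B$ satisfies $(SSP)$ as well. The main obstacle, as noted, is purely the degenerate-direction case; everything else is a routine ratio estimate built on the two displayed identities.
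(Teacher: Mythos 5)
Your proof is correct. It reaches the same conclusion by a somewhat different route than the paper. For the forward implication the underlying content is identical -- a case split on whether a component of the limiting direction vanishes, with the degenerate component handled by attaching points of negligible norm from the corresponding factor using only $0\in\overline{A}$ or $0\in\overline{B}$ -- but you organize it through the distance-ratio form of $(SSP)$ from Remark \ref{remark1}(1) together with the exact Pythagorean identity $d((a,z),A\times B)^2=d(a,A)^2+d(z,B)^2$, which turns the paper's explicit construction of approximating sequences in $A\times B$ into a per-coordinate ratio estimate. The more substantive divergence is in the converse: you lift the test sequence $a_m$ to $w_m=(a_m,z_m)$ with $\|z_m\|\ll\|a_m\|$ and work with the pure direction $(\alpha,0)$, whose membership in $D(A\times B)$ you verify from scratch using only $0\in\overline{B}$; the paper instead pairs $a_k$ with a sequence realizing a direction $b\in D(B)$, normalizes so that $\|b_k\|\le\|a_k\|$, and invokes membership of the mixed direction $(pa,\sqrt{1-p^2}\,b)$ in $D(A\times B)$ without spelling out why that particular $p$ is realized by points of $A\times B$. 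Your choice of $(\alpha,0)$ makes that membership immediate, so on this point your argument is actually tighter than the published one. The only thing worth making explicit if you write this up is the (easy, and already implicit in Remark \ref{remark1}(1)) equivalence between $d(a_m,A)/\|a_m\|\to 0$ and the existence of $b_m\in A$ with $\|a_m-b_m\|\ll\|a_m\|,\ \|b_m\|$: having chosen $b_m$ with $\|a_m-b_m\|$ close to $d(a_m,A)$, the bound against $\|b_m\|$ follows from $\|b_m\|\ge\|a_m\|-\|a_m-b_m\|$.
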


\begin{proof} We first show the ``only if'' part.
Let $\{ (a_k,b_k)\}$ be an arbitrary sequence of points of 
$\R^m \times \R^n$ 
tending to $(0,0) \in \R^m \times \R^n$ such that 
$$
\lim_{k \to \infty} {(a_k,b_k) \over \| (a_k,b_k) \|} 
= (a,b) \in D(A \times B).
$$

In the case where $\| a \| , \ \| b \| \ne 0$,
$\lim_{k \to \infty} {a_k \over \| a_k \|} = {a \over \| a \|} 
\in D(A)$ and $\lim_{k \to \infty} {b_k \over \| b_k \|} 
= {b \over \| b \|} \in D(B)$. 
Therefore it is easy to see that there exists a sequence of points
$\{ (c_k,d_k)\}$ of $A \times B$ tending to $(0,0) \in \R^m \times \R^n$ 
such that
$$
\| (a_k,b_k) - (c_k,d_k)\| \ll \| (a_k,b_k) \|, \ \| (c_k,d_k)\| .
$$

Let us assume that $\| a \| = 0$ and $\| b \| = 1$.
Then $\| a_k \| \ll \| b_k \|$  
and $\lim_{k \to \infty} {b_k \over \| b_k \|} \in D(B)$.
Since $B$ satisfies condition $(SSP)$ at $0 \in \R^n$,
there is a sequence of points
$\{ d_k\}$ of $B$ tending to $0 \in \R^n$ such that
$$
\| b_k - d_k\| \ll  \| b_k \|, \ \| d_k\| .
$$
Let $\{ c_j \}$ be a sequence of points of $A$ tending to $0 \in \R^m$ 
such that $\lim_{j \to \infty} {c_j \over \| c_j \|} \in D(A)$.
Take a subsequence $\{ c_{j_k} \}$ of $\{ c_j \}$ so that
$\| c_{j_k} \| < {1 \over k} \| d_k\|$.  
Then $\{ (c_{j_k},d_k)\}$ is a sequence of points of $A \times B$ 
tending to $(0,0) \in \R^m \times \R^n$ such that
$$
\| (a_k,b_k) - (c_{j_k},d_k)\| \ll \| b_k \|, \ \| d_k\| .
$$
It follows that
$$
\| (a_k,b_k) - (c_{j_k},d_k)\| \ll \| (a_k,b_k)\| , \  \| (c_{j_k},d_k)\| .
$$

The case where $\| a \| = 1$ and $\| b \| = 0$ follows 
similarly to the above.
Thus $A \times B$ satisfies condition $(SSP)$
at $(0,0) \in \R^m \times \R^n$.

We next show the ``if'' part.
Since the proof of the other part is the same, it suffices to show 
that $A$ satisfies condition $(SSP)$ at $0 \in \R^m$.
Let $\{ a_k \}$ be an arbitrary sequence of points of $\R^m$ 
tending to $0 \in \R^m$ such that 
$$
\lim_{k \to \infty} {a_k \over \| a_k \|} = a \in D(A).
$$
We take a sequence of points $\{ b_k \}$ of $\R^n$ 
tending to $0 \in \R^n$ such that 
$$
\lim_{k \to \infty} {b_k \over \| b_k \|} = b \in D(B).
$$
Taking a subsequence if necessary, we may assume that
$\| b_k \| \le \| a_k \|$ for any $k \in \N$, and
$$
\lim_{k \to \infty} {(a_k,b_k) \over \| (a_k,b_k) \|} 
= (pa,\sqrt{1-p^2}b) \in D(A \times B)
$$
where $0 < p \le 1$.
Since $A \times B$ satisfies condition $(SSP)$ at 
$(0,0) \in \R^m \times \R^n$,
there is a sequence of points $\{ (c_k,d_k)\}$ of $A \times B$ 
tending to $(0,0) \in \R^m \times \R^n$ such that
$$
\| (a_k,b_k) - (c_k,d_k)\| \ll 
\| (a_k,b_k) \|, \ \| (c_k,d_k) \| .
$$
It follows that
$$
\| a_k - c_k \| \ll \| a_k \|, \ \| c_k \| .
$$
Thus $A$ satisfies condition $(SSP)$ at $0 \in \R^m$.
\end{proof}

\begin{prop}\label{dproduct}
Let $A \subseteq \R^m, B \subseteq \R^n$ be  set-germs at $0\in \R^m$ and 
$0\in \R^n$ respectively, such that 
$0\in \overline{A}, 0\in \overline{B}$.
Then 
$$
D(A\times B)\subseteq \{(ta,\sqrt{1-t^2}b) \ | \ a\in D(A), \ 
b\in D(B), \ t\in [0,1]\}.
$$
Moreover if both $A$ and $B$ satisfy condition $(SSP)$,
then the equality holds.
\end{prop}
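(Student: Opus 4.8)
The plan is to prove the two inclusions separately; the first one will require no hypothesis on $A,B$, and only the reverse inclusion will use condition $(SSP)$.

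For the inclusion $D(A\times B)\subseteq\{(ta,\sqrt{1-t^2}b)\mid a\in D(A),\ b\in D(B),\ t\in[0,1]\}$, I would take $(u,v)\in D(A\times B)\subset S^{m+n-1}$ together with a sequence $(a_k,b_k)\in(A\times B)\setminus\{0\}$ with $(a_k,b_k)\to 0$ and $(a_k,b_k)/\|(a_k,b_k)\|\to(u,v)$, set $t_k:=\|a_k\|/\|(a_k,b_k)\|\in[0,1]$, and pass to a subsequence along which $t_k\to t\in[0,1]$. Then $\|u\|=t$ and $\|v\|=\sqrt{1-t^2}$, since $\|b_k\|/\|(a_k,b_k)\|=\sqrt{1-t_k^2}$. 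If $t\in(0,1)$ then $a_k\neq 0$ and $b_k\neq 0$ for large $k$, and $a_k/\|a_k\|\to u/t=:a\in D(A)$, $b_k/\|b_k\|\to v/\sqrt{1-t^2}=:b\in D(B)$, so $(u,v)=(ta,\sqrt{1-t^2}b)$. If $t=0$ then $u=0$ and eventually $b_k\neq 0$ with $b_k/\|b_k\|\to v\in D(B)$, so $(u,v)=(0\cdot a,\ 1\cdot b)$ for any $a\in D(A)$ (which exists because $0\in\overline A$); the case $t=1$ is symmetric.

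For the reverse inclusion I would fix $a\in D(A)$, $b\in D(B)$, $t\in[0,1]$ and construct a sequence in $(A\times B)\setminus\{0\}$ tending to $0$ whose directions converge to $(ta,\sqrt{1-t^2}b)$. The boundary values $t=0,1$ are handled directly, with no appeal to $(SSP)$: for $t=0$ one picks $y_k\in B\setminus\{0\}$ with $y_k\to 0$, $y_k/\|y_k\|\to b$, and $c_k\in A$ with $\|c_k\|\le\|y_k\|/k$ (possible since $0\in\overline A$), and checks $(c_k,y_k)/\|(c_k,y_k)\|\to(0,b)$. The essential case is $t\in(0,1)$, and this is where condition $(SSP)$ enters: one cannot in general expect a mere convergent sequence in $A$ to contain points at the scale prescribed by $\|y_k\|$, but $(SSP)$ of $A$ provides exactly that. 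Concretely, with $y_k$ as above, I would set $e_k:=\frac{t}{\sqrt{1-t^2}}\|y_k\|\,a$, so $e_k\to 0$ and $e_k/\|e_k\|=a\in D(A)$, and invoke $(SSP)$ of $A$ to get $c_k\in A$ with $\|c_k-e_k\|\ll\|e_k\|,\ \|c_k\|$. This forces $\|c_k\|/\|e_k\|\to 1$ and $c_k/\|c_k\|\to a$, hence $\|(c_k,y_k)\|=\|y_k\|(1-t^2)^{-1/2}(1+o(1))$, and a short computation then gives $c_k/\|(c_k,y_k)\|\to ta$ and $y_k/\|(c_k,y_k)\|\to\sqrt{1-t^2}\,b$, i.e. $(c_k,y_k)/\|(c_k,y_k)\|\to(ta,\sqrt{1-t^2}b)$, so $(ta,\sqrt{1-t^2}b)\in D(A\times B)$.

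I expect the only mildly technical point to be the final limit computation in the interior case, which is routine bookkeeping starting from $\|c_k-e_k\|\ll\|e_k\|$; there should be no genuine obstacle. The one idea worth isolating is that $(SSP)$ is precisely the tool that lets one place points of $A$ at an arbitrary small scale along a prescribed direction of $D(A)$, something a plain choice of convergent sequence in $A$ cannot guarantee. (In fact the argument uses only $(SSP)$ of $A$; the symmetric hypothesis on $B$ is not needed, though it is of course consistent with the statement.)
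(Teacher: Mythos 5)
Your proof is correct and follows essentially the same strategy as the paper: the first inclusion is the same subsequence argument on $t_k=\|a_k\|/\|(a_k,b_k)\|$, and the reverse inclusion uses $(SSP)$ to place points of the factors at the scales prescribed by $(ta,\sqrt{1-t^2}\,b)$, exactly as the paper does when it applies $(SSP)$ of $A$ and of $B$ to the model sequences $t_ia$ and $st_ib$ with $s=\sqrt{1-t^2}/t$. The one genuine difference is your closing parenthetical: by anchoring the scale to an actual sequence $y_k\in B$ realising the direction $b$ (which exists by the mere definition of $D(B)$) and invoking $(SSP)$ only for $A$, you show that the equality already holds when just one of the two factors satisfies $(SSP)$ --- a slight sharpening of the hypothesis that the paper's symmetric argument does not record. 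Everything else (the boundary cases $t=0,1$ handled without $(SSP)$, and the limit bookkeeping starting from $\|c_k-e_k\|\ll\|e_k\|,\ \|c_k\|$) checks out; the only implicit assumption, shared with the paper, is that $D(A)$ and $D(B)$ are nonempty, i.e.\ that $A\setminus\{0\}$ and $B\setminus\{0\}$ accumulate at the origin, which is the standing convention for set-germs in this context.
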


\begin{proof}

Let $\{ (a_k,b_k)\}\in A\times B$ be an arbitrary sequence of points   
tending to $(0,0) \in \R^m \times \R^n$ such that 
$$
\lim_{k \to \infty} {(a_k,b_k) \over \| (a_k,b_k) \|} 
= (a,b) \in D(A \times B).
$$

We must have at least one of $\| a \| \ne 0$ or $ \| b \| \ne 0$, hence we get that
$\lim_{k \to \infty} {a_k \over \| a_k \|} = {a \over \| a \|} 
\in D(A)$ or $\lim_{k \to \infty} {b_k \over \| b_k \|} 
= {b \over \| b \|} \in D(B)$. In any case we  take $t=\| a \|=
\sqrt{1-\| b \|^2}$. In the case $a=0$ then $t=0$ and $b\in D(B)$ so we can write 
$(0,b)$ as required.

For the other inclusion, let 
$(ta,\sqrt{1-t^2}b), \, a \in D(A), \,  b \in D(B),$ for some $ t\in [0,1]$. If 
$t\ne 0,1,$ 
then take $s= {\sqrt{1-t^2}\over t}$ and consider a sequence of points
$(t_ia, st_ib),\, t_i\to 0,$ such that $\frac{(t_ia, st_ib)}{\|(t_ia, st_ib)\|}\to
(ta,stb).$
Using the fact that $A$ and $B$ satisfy the condition $(SSP)$ we can find 
$a_i\in A, b_i\in B$ such that $\|a_i-t_ia\|\ll t_i, \|b_i-st_ib\|\ll st_i$ and
this implies that $\frac{(a_i,b_i)}{\|(a_i,b_i)\|}\to (ta,\sqrt{1-t^2}b)$. The 
case when $t=0,1$ is 
trivial.
(We can always reduce the $(SSP)$ 
property to the case when the points are on a line.)

\end{proof}

\subsection{Complex Sequence Selection Property}\label{CSS}

We next consider the complex tangent cone and introduce a complex analogue for the 
condition (SSP).
Let $A \subset \C^n$ be a set-germ at $0 \in \C^n$ such that
$0 \in \overline{A}$.
The {\em complex tangent cone} of $A$ is defined as follows:
$$
LD^*(A) := \biggl\{v \in \C^n \ | \
\begin{matrix}
\exists  \{ c_i \} \subset \C, \
\exists  \{ v_i \} \subset A \setminus \{ 0 \} \to 0 \in \C^n \\
\ \text{s.t.} \ 
\lim_{i \to \infty} c_i v_i = v 
\end{matrix}\biggr\}.
$$
Note that if $A$ is a real (resp. complex) vector space, then 
$LD(A)=A, \, LD^*(A)=A+iA$ 
(resp. $LD^*(A)=A$). 

Let $\C D(A) = \{ c v \in \C^n \ | \ c \in \C, v \in D(A) \}$.
Then we have following:

\begin{lem}\label{complex cone}
$LD^*(A) = \C D(A)$.
\end{lem}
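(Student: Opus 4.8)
The plan is to prove the two inclusions $LD^*(A) \subseteq \C D(A)$ and $\C D(A) \subseteq LD^*(A)$ separately, working directly from the definitions of the complex tangent cone and of the direction set $D(A)$.

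First I would show $\C D(A) \subseteq LD^*(A)$, which is the easier direction. Take $v = cw$ with $c \in \C$ and $w \in D(A)$. By definition of $D(A)$ there is a sequence $\{x_i\} \subset A \setminus \{0\}$ with $x_i \to 0$ and $x_i/\|x_i\| \to w$. Setting $c_i := c/\|x_i\|$, we get $c_i x_i = c \cdot x_i/\|x_i\| \to cw = v$, so $v \in LD^*(A)$. The case $c = 0$ is immediate since $0 \in LD^*(A)$ trivially (take $c_i = 0$), and one should note $D(A) \neq \emptyset$ as soon as $0 \in \overline{A}$ and $A \neq \{0\}$ (the degenerate case $A = \{0\}$ being vacuous).

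Next, for $LD^*(A) \subseteq \C D(A)$, take $v \in LD^*(A)$, so there are $c_i \in \C$ and $v_i \in A \setminus \{0\}$ with $v_i \to 0$ and $c_i v_i \to v$. If $v = 0$ then $v = 0 \cdot w \in \C D(A)$ for any $w \in D(A)$, so assume $v \neq 0$; then $\|c_i v_i\| \to \|v\| > 0$, hence $|c_i| \|v_i\| \to \|v\|$, so $|c_i| \neq 0$ for large $i$ and $|c_i| = \|v\|/\|v_i\| \cdot (1 + o(1)) \to \infty$. Write $c_i = |c_i| e^{i\theta_i}$. Passing to a subsequence, $e^{i\theta_i} \to e^{i\theta}$ for some $\theta$ (compactness of the unit circle) and, passing to a further subsequence, $v_i/\|v_i\| \to w \in S^{2n-1}$; by definition $w \in D(A)$. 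Then $c_i v_i = |c_i| \|v_i\| \cdot e^{i\theta_i} \cdot v_i/\|v_i\| \to \|v\| e^{i\theta} w$, and since the left side converges to $v$ we get $v = (\|v\| e^{i\theta}) w \in \C D(A)$.

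The only mildly delicate point — and the step I would be most careful about — is the passage to subsequences for the unit complex scalars $e^{i\theta_i}$: one must extract $e^{i\theta_i} \to e^{i\theta}$ and $v_i/\|v_i\| \to w$ along a common subsequence before concluding, and then observe that the full original sequence $c_i v_i$ still converges to $v$, so the limit computed along the subsequence must equal $v$. Everything else is a routine rearrangement of norms and directions.
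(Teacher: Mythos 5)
Your proof is correct and follows essentially the same route as the paper: the inclusion $\C D(A) \subseteq LD^*(A)$ is treated as immediate, and for the converse one extracts a common subsequence along which the unit scalars $c_i/|c_i|$ and the unit vectors $v_i/\|v_i\|$ both converge, yielding $v = \|v\|\, e^{i\theta} w$ with $w \in D(A)$ (the paper writes this as $v = c\|v\|w$ with $c = \lim c_{i_j}/\|c_{i_j}\|$). The extra observations you make (that $|c_i| \to \infty$, and the handling of the degenerate cases $v=0$ and $c=0$) are harmless refinements of the same argument.
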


\begin{proof}
Since the inclusion $LD^*(A) \supset \C D(A)$ is obvious, 
we show the converse inclusion.
Note that $0 \in LD^*(A) \cap \C D(A)$.
Take an element $v \in LD^*(A) \setminus \{ 0 \}$.
By definition, there exist  $\{ c_i \} \subset \C$
and $\{ v_i \} \subset A \setminus \{ 0 \} \to 0 \in \C^n$
such that $\lim_{i \to \infty} c_i v_i = v$.
Then there are subsequences $\{ c_{i_j} \}$ of $\{ c_i \}$
and $\{ v_{i_j} \}$ of $\{ v_i \}$
such that $\lim_{j \to \infty} { c_{i_j} \over \| c_{i_j} \| } = c
\in \C \setminus \{ 0 \}$ and
$\lim_{j \to \infty} { v_{i_j} \over \| v_{i_j} \| } = w \in D(A)$.
Then we have $v = c \| v \| w \in \C D(A)$.
It follows that $LD^*(A) \subset \C D(A)$.
\end{proof}

We define the {\em complex projective direction set} $D^*(A) \subset P\C^{n-1}$ 
of $A$ as the quotient set of $LD^*(A) \setminus \{ 0 \}$
by $\C \setminus \{ 0 \}$.   
Then we have

\begin{lem}\label{complex variety}
(Lemma 8.1 in H. Whitney \cite{whitney})
Let $A \subset \C^n$ be an analytic variety such that $0 \in A$.
Then $LD^*(A)$ is also an analytic variety in $\C^n$ and
$D^*(A)$ is a projective variety.
In addition, we have
$$
\dim_{\C} A = \dim_{\C} LD^*(A) = \dim_{\C} D^*(A) + 1.
$$
\end{lem}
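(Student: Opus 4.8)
The plan is to recognise $LD^{*}(A)$ as the classical algebraic tangent cone of $A$ at $0$ and then read off the analyticity and dimension statements from standard local/commutative algebra. Let $\mathcal O_{n}=\mathcal O_{\C^{n},0}$ denote the ring of germs of holomorphic functions at $0$, let $I\subset\mathcal O_{n}$ be the ideal of the germ $A$, and let $\mathrm{in}(I)\subset\C[z_{1},\dots,z_{n}]$ be the homogeneous ideal generated by the leading forms (lowest-degree homogeneous parts) of all $f\in I$. Its zero locus $C\subset\C^{n}$ is an affine algebraic cone with vertex $0$; in particular $C$ is an analytic variety and its image in $P\C^{n-1}$ is a projective variety. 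The first and principal step is to prove $LD^{*}(A)=C$ as germs at $0$, equivalently that $D^{*}(A)$ is the projectivisation of $C$.

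For the inclusion $LD^{*}(A)\subseteq C$ I would use Lemma \ref{complex cone}: it suffices to take $v=cw$ with $c\in\C$ and $w=\lim_{i}v_{i}/\|v_{i}\|\in D(A)$ for some $v_{i}\in A\setminus\{0\}$, $v_{i}\to 0$. For any $f\in I$ with leading form $f_{d}$ of degree $d$ one has $0=f(v_{i})=f_{d}(v_{i})+O(\|v_{i}\|^{d+1})$; dividing by $\|v_{i}\|^{d}$ and letting $i\to\infty$ gives $f_{d}(w)=0$, hence $f_{d}(v)=c^{d}f_{d}(w)=0$ for every $f\in I$, so $v\in C$. For the reverse inclusion $C\subseteq LD^{*}(A)=\C D(A)$ one invokes the complex curve selection lemma (parametrising the branches of $C$ by Puiseux series): every point of the projectivised cone is realised as $\lim_{t}\gamma(t)/\|\gamma(t)\|$ for some analytic arc $\gamma$ with $\gamma(0)=0$ and $\gamma(t)\in A$, so the direction lies in $D(A)$, and scaling gives $C\subseteq LD^{*}(A)$. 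This equivalence of the ``secant'' tangent cone with the algebraic tangent cone is precisely Whitney's Lemma 8.1, so one may alternatively simply cite \cite{whitney} at this point.

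Granting $LD^{*}(A)=C$, the dimension assertion is purely algebraic. The coordinate ring of $C$ is the associated graded ring $\mathrm{gr}_{\mathfrak m}(\mathcal O_{A,0})$ of the analytic local ring $\mathcal O_{A,0}=\mathcal O_{n}/I$ with respect to its maximal ideal $\mathfrak m$, and Hilbert--Samuel theory gives $\dim_{\C}C=\dim\,\mathrm{gr}_{\mathfrak m}(\mathcal O_{A,0})=\dim\,\mathcal O_{A,0}=\dim_{\C}A$, the local dimension of $A$ at $0$. Since $C$ is a cone with vertex $0$ and $\dim_{\C}A\ge 1$ --- the case $\dim_{\C}A=0$, i.e. $A=\{0\}$ near $0$, being trivial with $LD^{*}(A)=\{0\}$ and $D^{*}(A)=\emptyset$ --- passing to the projectivisation drops the dimension by exactly one, so $\dim_{\C}D^{*}(A)=\dim_{\C}C-1$. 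Combining the displayed equalities yields $\dim_{\C}A=\dim_{\C}LD^{*}(A)=\dim_{\C}D^{*}(A)+1$.

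The main obstacle is the first step: showing that the geometric tangent cone built from limits of secants agrees with the algebraic tangent cone defined by leading forms. The inclusion into the algebraic cone is the elementary estimate above; the reverse inclusion is the substantive point and genuinely requires a parametrisation/curve-selection argument to realise every algebraic tangent direction by an honest sequence of points of $A$. Because this is exactly Whitney's Lemma 8.1, the most economical route in the present paper is to record the easy inclusion and cite \cite{whitney} for the converse, then finish with the commutative-algebra dimension count above.
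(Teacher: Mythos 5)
Your proposal is mathematically sound, but note that the paper offers no proof of this lemma at all: it is stated purely as a citation of Lemma 8.1 of Whitney's \emph{Tangents to an analytic variety}, so your reconstruction is doing work the authors deliberately outsourced. What you supply is the standard argument: identify $LD^{*}(A)$ with the algebraic tangent cone $C=V(\mathrm{in}(I))$, where the inclusion $LD^{*}(A)\subseteq C$ is the elementary leading-form estimate (correct as written, and Lemma \ref{complex cone} reduces it to directions $w\in D(A)$), and the dimension count $\dim_{\C}C=\dim\mathrm{gr}_{\mathfrak m}(\mathcal O_{A,0})=\dim\mathcal O_{A,0}=\dim_{\C}A$ plus the drop by one under projectivisation is exactly right (with the degenerate case $A=\{0\}$ correctly set aside). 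The one place your sketch is loose is the reverse inclusion $C\subseteq LD^{*}(A)$: Puiseux-parametrising the branches \emph{of $C$} does not by itself produce points \emph{of $A$} realising a prescribed tangent direction; the standard fix is to work in the blowup of $A$ at $0$ (the exceptional fibre of the strict transform is the projectivised tangent cone and is proper over the point, so every point of it is hit by an arc in $A$), or to use Whitney's own projection-to-generic-linear-subspace argument. You correctly identify this as the only substantive step and propose citing Whitney for it, which in effect collapses your proof back to what the paper does; either route is acceptable, and your version has the merit of making visible where analyticity of $A$ (as opposed to mere subanalyticity) is actually used.
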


The next lemma follows also from Remark 8.2 
and Theorem 11.8 in \cite{whitney}:

\begin{lem}\label{coincidence}
For an analytic variety $0 \in A \subset \C^n$,
$LD^*(A) = LD(A)$.
\end{lem}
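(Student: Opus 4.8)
Since $LD^*(A)=\C D(A)$ by Lemma~\ref{complex cone}, while $LD(A)=L(D(A))=\{t v : t\ge 0,\ v\in D(A)\}$ by definition, the inclusion $LD(A)\subseteq LD^*(A)$ is immediate. For the reverse inclusion, writing $cv=|c|\,(e^{i\arg c}v)$ shows $\C D(A)=\{t w : t\ge 0,\ w\in S^1\!\cdot\! D(A)\}$, where $S^1\!\cdot\! D(A):=\{e^{i\varphi}v:\varphi\in\R,\ v\in D(A)\}\subseteq S^{2n-1}$. Intersecting a set of the form $\{ts:t\ge 0,\ s\in S\}$ with the unit sphere $S^{2n-1}$ returns $S$ precisely when $S\subseteq S^{2n-1}$; hence $LD^*(A)=LD(A)$ if and only if $S^1\!\cdot\! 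D(A)=D(A)$. So the whole lemma reduces to showing that $D(A)$ is invariant under multiplication by unit complex scalars.

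To prove this invariance I would realise each direction by a holomorphic arc. Given $v\in D(A)$, the structure theory of complex analytic varieties supplies a holomorphic map $\psi$ on a disc $\{|t|<\epsilon\}\subset\C$ with $\psi(0)=0$, $\psi\not\equiv 0$, $\psi(t)\in A$ for all small $t\ne 0$, and leading term $\psi(t)=a_d t^d+(\text{higher order})$ with $a_d\ne 0$ and $a_d/\|a_d\|=v$; this is exactly what Remark~8.2 and Theorem~11.8 of Whitney~\cite{whitney} provide (equivalently, one normalises a suitable curve section of $A$). Granting such an arc, fix $\zeta\in S^1$: then $t\mapsto\psi(\zeta t)$ is again holomorphic on a disc and maps the punctured disc into $A$ (since $|\zeta|=1$ keeps $\zeta t$ in the domain of $\psi$), and its leading term is $a_d\zeta^d t^d+\cdots$, so its leading direction is $\zeta^d v$; hence $\zeta^d v\in D(A)$. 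As $\zeta$ runs over $S^1$ so does $\zeta^d$, so $S^1\!\cdot\! v\subseteq D(A)$, and since $v$ was arbitrary, $D(A)$ is $S^1$-invariant and the lemma follows.

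The only real obstacle is the existence of a genuinely holomorphic arc through $0$ in $A$ with prescribed leading direction $v\in D(A)$ --- this is where complex analyticity is indispensable (a real analytic germ generally has a direction set that is not $S^1$-invariant). If one wants to avoid quoting Whitney here, I would instead reduce to the curve case: since $\C v\subseteq\C D(A)=LD^*(A)$, choosing a complex affine subspace $H\ni 0$ with $\C v\subseteq H$ and $\dim_\C H=n-\dim_\C A+1$ generically makes $A\cap H$ a complex analytic curve germ with $LD^*(A\cap H)=LD^*(A)\cap H\supseteq\C v$; applying the Puiseux parametrisation of each irreducible branch of $A\cap H$, whose image in $S^{2n-1}$ is exactly the $S^1$-orbit of that branch's leading direction, gives $v\in LD^*(A\cap H)\cap S^{2n-1}=D(A\cap H)\subseteq D(A)$ and likewise $S^1\!\cdot\! v\subseteq D(A)$. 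The delicate point of this alternative is the generic-hyperplane-section theory (that $A\cap H$ is a curve and that the complex tangent cone commutes with such sections), which can be controlled using Lemma~\ref{complex variety}.
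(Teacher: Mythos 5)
Your argument is correct, and it rests on the same external input as the paper, which offers no proof at all beyond the citation of Remark 8.2 and Theorem 11.8 of Whitney; your reduction of the lemma to the $S^1$-invariance of $D(A)$ is exactly the paper's own Remark \ref{remark0}, and the rotation-of-parameter trick $t\mapsto\zeta t$ applied to a holomorphic arc realising a given tangent line correctly supplies the invariance. The only point you lean on Whitney (or, in your alternative, on Puiseux plus generic sections) for — that every line of the complex tangent cone is the tangent line of a holomorphic arc in $A$ — is precisely the content the paper also delegates to \cite{whitney}, so there is no gap.
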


\begin{rem}\label{remark0}
Let $S^1 = \{ e^{i \theta} \ | \ \theta \in \R \}$.
For $A \subset \C^n$ such that $0 \in \overline{A}$,
$LD^*(A) = LD(A)$ if and only if $S^1 D(A) = D(A)$.
Note that $D^*(A)$ is the quotient of $D(A)$ by $S^1$.
\end{rem}

One can also consider the sequence selection property over the complex numbers,
which we denote by  $(CSSP)$.

\begin{defn}\label{CSSP}
Let $A \subset \C^n$ be a set-germ at $0 \in \C^n$
such that $0 \in \overline{A}$.
We say that $A$ satisfies {\em condition} $(CSSP)$,
if for any sequence of points $\{ a_m \}$ of $\C^n$
tending to $0 \in \C^n$ such that 
$\lim_{m \to \infty} {a_m \over \| a_m \| } \in LD^*(A)$,
there is a sequence of points $\{ b_m \} \subset A$ such that
$$
 \| a_m - b_m \| \ll \| a_m \|, \ \| b_m \| . 
$$ 
\end{defn}

\begin{rem}
If  $A$ satisfies {\em condition} $(CSSP)$, then $LD^*(A)= LD(A)$, 
and it is clear that it also satisfies {\em condition} $(SSP)$. 
In particular, Lemma \ref{directionrel} implies that 
$D(h(A))= D(h(LD(A)))=D(h(LD^*(A)))$ and  $LD^*(h(A))=LD^*(h(LD^*(A))).$ 
In general it is not true that 
$D(h(A))=D(h(LD^*(A)))$ implies $A$ satisfies {\em condition} $(SSP)$. 
Amongst the examples of sets satisfying {\em condition} $(CSSP)$ 
we mention the complex tangent cones $LD^*(A)$ and  the complex analytic varieties.
\end{rem}
\begin{prop}
Let $A \subset \C^n$ be a set-germ at $0 \in \C^n$ such that 
$0 \in \overline{A}$. 
Then $A$ satisfies {\em condition} $(CSSP)$ if and only if 
$A$ satisfies {\em condition} $(SSP)$ and $S^1D(A)=D(A)$. Consequently if $A$ 
satisfies 
condition $(SSP)$, then  both  $S^1A \,  \text{and} \, \,  \C A$ satisfy condition 
$(CSSP)$.
\end{prop}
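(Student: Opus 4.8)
The plan is to deduce $(CSSP)$ from ordinary $(SSP)$, the bridge being Lemma~\ref{complex cone} (so that $LD^*(A)=\C D(A)$) together with the observation that the hypothesis $S^1 D(A)=D(A)$ eliminates the gap between $LD^*(A)$ and $LD(A)$ on the unit sphere. For the forward implication I would simply invoke the remark following Definition~\ref{CSSP}, according to which $(CSSP)$ forces both $LD^*(A)=LD(A)$ and $(SSP)$, combined with Remark~\ref{remark0}, by which $LD^*(A)=LD(A)$ is equivalent to $S^1 D(A)=D(A)$. (A self-contained argument is short anyway: since $D(A)\subseteq \C D(A)=LD^*(A)$, condition $(CSSP)$ specializes at once to $(SSP)$; and applying $(CSSP)$ to the sequences $v/m$ for $v\in LD^*(A)\cap S^{2n-1}$ shows $LD^*(A)\cap S^{2n-1}\subseteq D(A)$, i.e. $S^1 D(A)=D(A)$.)

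For the converse, assume $A$ satisfies $(SSP)$ and $S^1 D(A)=D(A)$, and take $\{a_m\}\subset\C^n$ with $a_m\to 0$ and $v=\lim a_m/\|a_m\|\in LD^*(A)$. By Lemma~\ref{complex cone} write $v=cw$ with $c\in\C$ and $w\in D(A)$; comparing norms gives $|c|=1$, hence $v\in S^1 D(A)=D(A)$. Now the ordinary $(SSP)$ of $A$, viewed as a germ in $\R^{2n}$, produces $\{b_m\}\subset A$ with $\|a_m-b_m\|\ll\|a_m\|,\ \|b_m\|$, which is precisely what $(CSSP)$ demands. This direction presents no real difficulty.

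For the last assertion I would apply the equivalence just established to $B=S^1A$ and to $B=\C A$; it then suffices to check that each of these satisfies $(SSP)$ and has $S^1 D(B)=D(B)$. The equality $S^1 D(B)=D(B)$ follows in both cases from the elementary equivariance $D(\Phi(X))=\Phi(D(X))$ for a linear isometry $\Phi$ of $\R^{2n}$, applied to $\Phi=$ (multiplication by $e^{i\theta}$) and to the $S^1$-invariant sets $S^1A$ and $\C A$. For $\C A$, which is literally a half-cone with vertex $0$ — namely $L(\{e^{i\theta}a/\|a\| \ :\ \theta\in\R,\ a\in A\setminus\{0\}\})$ — condition $(SSP)$ is one of the standing facts about cones used repeatedly above. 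For $S^1A$ I would first record that $D(S^1A)=S^1 D(A)$ (the inclusion $\subseteq$ by writing a direction-realizing sequence as $e^{i\theta_m}a_m$ with $a_m\in A$ and passing to a subsequence where $\theta_m\to\theta_0$ and $a_m/\|a_m\|\to w\in D(A)$), and then establish $(SSP)$ by a rotation trick: given $\{y_m\}$ with normalized limit $e^{i\theta_0}w$, apply $(SSP)$ of $A$ to $z_m:=e^{-i\theta_0}y_m$ and rotate the resulting approximants back by $e^{i\theta_0}$, all the $\ll$-estimates surviving because $e^{i\theta_0}$ is an isometry. The only step calling for a little care is this last verification for $S^1A$; everything else is immediate from Lemma~\ref{complex cone}, Remark~\ref{remark0}, and the fact that cones satisfy $(SSP)$.
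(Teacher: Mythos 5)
Your proof of the equivalence is correct and follows essentially the same route as the paper: the forward direction is read off from the remarks preceding the proposition (your self-contained version via the sequences $v/m$ is also fine), and the converse uses exactly the paper's observation that a unit vector in $LD^*(A)=\C D(A)$ must lie in $S^1D(A)=D(A)$, after which ordinary $(SSP)$ finishes. The paper leaves the final assertion about $S^1A$ and $\C A$ unproved; your verification of it — $\C A$ is a half-cone, $D(S^1A)=S^1D(A)$, and the rotation-by-$e^{i\theta_0}$ isometry trick to transport $(SSP)$ from $A$ to $S^1A$ — is a correct and worthwhile supplement.
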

\begin{proof}The direct implication is clear from the comments above. For the other 
implication   
let us consider a sequence  $\{ a_m \}$ of $\C^n$
tending to $0 \in \C^n$, such that 
$\lim_{m \to \infty} {a_m \over \| a_m \| } \in LD^*(A)=\C D(A)$. It follows that 
$\lim_{m \to \infty} {a_m \over \| a_m \| }=ca \in S^1D(A)=D(A)$ by assumption. 
Because 
$A$ satisfies {\em condition} $(SSP)$ it follows that there are $b_m\in A$ such that 
$\| a_m - b_m \| \ll \| a_m \|, \ \| b_m \| $, that is $A$ satisfies 
{\em condition} 
$(CSSP)$.

\end{proof}
\bigskip

\section{Transversality.}\label{transversality}
\subsection{Transversality for singular sets}
\label{singular case}

Let us define the notion of transversality for complex analytic
varieties, using the complex tangent cones.

\begin{defn}\label{general transversality}
Let $0 \in A, \ B \subset \C^n$ be analytic varieties.
Then we say that $A$ and $B$ are {\em transverse at}
$0 \in \C^n$ if the following equality holds:
$$
\dim_{\C} LD^*(A) + \dim_{\C} LD^*(B) - n 
= \dim_{\C} (LD^*(A) \cap LD^*(B))
$$
\end{defn}

Concerning this transversality, we have

\begin{thm}\label{invtransversality}
Let $h : (\C^n,0) \to (\C^n,0)$ be a bi-Lipschitz homeomorphism,
and let $0 \in A, \ B, \ h(A), \ h(B) \subset \C^n$ 
be analytic varieties.
Then $A$ and $B$ are transverse at $0 \in \C^n$
if and only if $h(A)$ and $h(B)$ are transverse at $0 \in \C^n$.
\end{thm}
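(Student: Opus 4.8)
The plan is to reduce the statement about complex-analytic transversality to the real directional-dimension theorem already available. The crucial bridge is the combination of Lemma \ref{complex variety} and Lemma \ref{coincidence}: for an analytic variety $0 \in A \subset \C^n$ we have $LD^*(A) = LD(A)$ and $\dim_{\C} A = \dim_{\C} LD^*(A)$, so the complex dimension of the complex tangent cone equals the complex dimension of its underlying set, which in turn is half its real dimension. Thus $\dim_{\R} LD(A) = 2\dim_{\C} LD^*(A)$, and similarly for $B$, $h(A)$, $h(B)$. The transversality equation
$$
\dim_{\C} LD^*(A) + \dim_{\C} LD^*(B) - n = \dim_{\C}(LD^*(A) \cap LD^*(B))
$$
is therefore equivalent, after multiplying by $2$, to
$$
\dim_{\R} LD(A) + \dim_{\R} LD(B) - 2n = \dim_{\R}(LD(A) \cap LD(B)),
$$
once we know that $\dim_{\R}(LD^*(A) \cap LD^*(B)) = \dim_{\R}(LD(A) \cap LD(B))$, which holds because $LD^*(A) \cap LD^*(B) = LD(A) \cap LD(B)$ is itself a complex analytic variety (an intersection of analytic varieties), so its real dimension is twice its complex dimension.

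Next I would rewrite the left-hand side in terms of direction sets rather than their cones. Since $\dim_{\R} LD(A) = \dim_{\R} D(A) + 1$ for any set-germ $A$ with $0 \in \overline{A}$ (the cone over $D(A)$ adds one to the dimension of $D(A)$, using that $D(A) \neq \emptyset$), and likewise $\dim_{\R}(LD(A) \cap LD(B)) = \dim_{\R}(D(A) \cap D(B)) + 1$ when the intersection is nonempty, the transversality of $A$ and $B$ becomes the numerical identity
$$
\dim_{\R}(D(A) \cap D(B)) = \dim_{\R} D(A) + \dim_{\R} D(B) - (2n-1).
$$
The key point is that $\dim_{\R} D(A)$, $\dim_{\R} D(B)$ are bi-Lipschitz invariants in this analytic setting (by Lemma \ref{complex variety}, $D(A)$ is a real projective variety of dimension $2\dim_{\C} D^*(A)$, whose dimension equals $\dim_{\C} A - 1$ doubled; and $\dim_{\C} A$, being the dimension of the variety, is preserved under a homeomorphism, in particular under $h$). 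So passing from $A$ to $h(A)$ and from $B$ to $h(B)$ leaves the two individual terms $\dim_{\R} D(A)$, $\dim_{\R} D(B)$ unchanged, and by Theorem \ref{main theorem} (the Main Theorem of \cite{koikepaunescu}, applicable since $A$, $B$, $h(A)$, $h(B)$ are all subanalytic, being complex-analytic) also leaves the cross term $\dim(D(A) \cap D(B)) = \dim(D(h(A)) \cap D(h(B)))$ unchanged. Therefore the numerical identity holds for $(A,B)$ iff it holds for $(h(A), h(B))$, which is exactly the asserted equivalence.

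The main obstacle I anticipate is bookkeeping around the degenerate cases — specifically, making sure the reductions to direction sets are valid when one of the intersections is empty or reduces to $\{0\}$. If $LD^*(A) \cap LD^*(B) = \{0\}$ then the transversality equality forces $\dim_{\C} LD^*(A) + \dim_{\C} LD^*(B) = n$, and one must check the direction-set reformulation handles $D(A) \cap D(B) = \emptyset$ consistently (with the convention $\dim \emptyset = -1$, so that the $+1$ offset still works). Similarly one should note that for a complex-analytic variety $A$ containing $0$ with $A \neq \{0\}$, $D(A)$ is automatically nonempty (indeed $D^*(A)$ is a nonempty projective variety by Lemma \ref{complex variety}), so $\dim D(A) \geq 0$; the case $A = \{0\}$ is trivial on both sides. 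Once these edge cases are pinned down, the argument is essentially a dimension count feeding into Theorem \ref{main theorem}, and the substantive content — that the common direction dimension is a bi-Lipschitz invariant — is entirely imported from the earlier result. I would also remark that $h$ need not itself be complex-analytic; only the four sets involved are required to be analytic varieties, which is exactly the hypothesis of Theorem \ref{main theorem}.
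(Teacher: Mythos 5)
Your proposal is correct and follows essentially the same route as the paper: the individual terms are controlled via Lemma \ref{complex variety} (complex dimension of the tangent cone equals that of the variety, a homeomorphism invariant), and the intersection term is controlled by combining Lemma \ref{coincidence} with Theorem \ref{main theorem}, using that $LD^*(A)\cap LD^*(B)$ is itself a complex variety so its real dimension is twice its complex dimension. The only blemish is the parenthetical claim that $\dim_{\R} D(A)=2\dim_{\C}D^*(A)$ --- since $D(A)$ is the $S^1$-saturation of $D^*(A)$ inside $S^{2n-1}$ one has $\dim_{\R}D(A)=2\dim_{\C}D^*(A)+1=2\dim_{\C}A-1$ --- but this off-by-one does not affect your displayed identity or the argument.
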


\begin{proof}
We show only the ``only if'' part.
The ``if'' part follows similarly.

By assumption, 
$$
\dim_{\C} LD^*(A) + \dim_{\C} LD^*(B) - n 
= \dim_{\C} (LD^*(A) \cap LD^*(B)).
$$
By Lemma \ref{complex variety}, we see that
$$
\dim_{\C} LD^*(A) = \dim_{\C} LD^*(h(A)), \ \
\dim_{\C} LD^*(B) = \dim_{\C} LD^*(h(B)).
$$
Then, using Lemma \ref{coincidence} and Theorem \ref{main theorem},
we can compute $\dim_{\C} ((LD^*(A) \cap LD^*(B))$ as follows:

\begin{eqnarray*}
2 \dim_{\C} ((LD^*(A) \cap LD^*(B)) &=& \dim_{\R}((LD^*(A) \cap LD^*(B)) \\
&=& \dim_{\R}((LD(A) \cap LD(B)) \\
&=& \dim_{\R}((LD(h(A)) \cap LD(h(B))) \\
&=& \dim_{\R}((LD^*(h(A)) \cap LD^*(h(B))) \\
&=& 2 \dim_{\C} (LD^*(h(A)) \cap LD^*(h(B)))
\end{eqnarray*}
Therefore we have
$$
\dim_{\C} LD^*(h(A)) + \dim_{\C} LD^*(h(B)) - n
= \dim_{\C} (LD^*(h(A)) \cap LD^*(h(B)).
$$
Thus $h(A)$ and $h(B)$ are transverse at $0 \in \C^n$.
\end{proof}

\subsection{Weak transversality}
\label{weak transversality}

When dealing with singular sets in the real set up, we find more convenient to 
use a 
weaker form of transversality, in terms of  real tangent cones. This is 
analogous to 
the use of semi-arcs in Real Algebraic Geometry.

\begin{defn}Let $A$, $B \subset \R^n$ be set-germs at $0 \in \R^n$
such that $0 \in \overline{A} \cap \overline{B}$.
We say that $A$ and $B$ are {\em weakly transverse at}
$0 \in \R^n$ if $D(A) \cap D(B) = \emptyset$ 
(if and only if $LD(A)$ and $B$ are weakly transverse at $0 \in \R^n$).
\end{defn}
Concerning this weak transversality, we have the following:

\begin{lem}\label{subanalytic}
Let $A$, $B$ be two set-germs at $0 \in \R^n$
such that $0 \in \overline{A} \cap \overline{B}$,
and let $h : (\R^n,0) \to (\R^n,0)$ be a bi-Lipschitz homeomorphism.
Suppose that $h(A)$ (or $h(B)$) satisfies condition $(SSP)$.
If $D(A) \cap D(B) = \emptyset$, then 
$D(h(A)) \cap D(h(B)) = \emptyset$. 
\end{lem}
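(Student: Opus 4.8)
The plan is to argue by contradiction, directly unwinding the definition of condition $(SSP)$ and using the two-sided Lipschitz estimate for $h$. By the symmetry of the hypothesis it suffices to treat the case in which $h(A)$ satisfies condition $(SSP)$. So suppose $D(h(A)) \cap D(h(B)) \neq \emptyset$ and fix a unit vector $\alpha$ in this intersection. Since $\alpha \in D(h(B))$, there is a sequence $\{b_m\} \subset B \setminus \{0\}$ with $b_m \to 0$ (hence $h(b_m) \to 0$, and $h(b_m) \neq 0$ since $h$ is injective) such that $h(b_m)/\|h(b_m)\| \to \alpha$.

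The key step is to apply condition $(SSP)$ for $h(A)$ to the sequence $\{h(b_m)\}$: this sequence of points of $\R^n$ tends to $0$ and its directions converge to $\alpha \in D(h(A))$, so $(SSP)$ yields a sequence $\{h(a_m)\} \subset h(A)$, with $a_m \in A$, satisfying $\|h(b_m) - h(a_m)\| \ll \|h(b_m)\|, \ \|h(a_m)\|$. Invoking the Lipschitz inequalities $K_1\|x_1 - x_2\| \le \|h(x_1) - h(x_2)\| \le K_2\|x_1 - x_2\|$ (as recalled in the proof of Theorem \ref{samedimension}) then converts this into $\|b_m - a_m\| \ll \|b_m\|, \ \|a_m\|$; this is the only place the bi-Lipschitz hypothesis enters, and it is routine.

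To finish, I would pass to a subsequence $\{b_{m_j}\}$ along which $b_{m_j}/\|b_{m_j}\|$ converges to some $\beta \in D(B)$. The relation $\|b_m - a_m\| \ll \|b_m\|$ forces $a_{m_j}/\|a_{m_j}\| \to \beta$ as well, so $\beta \in D(A)$. Hence $\beta \in D(A) \cap D(B)$, contradicting the assumption $D(A) \cap D(B) = \emptyset$. The case where $h(B)$ (rather than $h(A)$) satisfies $(SSP)$ follows by interchanging the roles of $A$ and $B$.

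I do not expect a genuine obstacle here: the whole argument is a short chain of reductions to the definitions, in the same spirit as the proof of Lemma \ref{presdir}. The only point requiring minor care is the bookkeeping with the symbol $\ll$ --- verifying that ``$\|b_m - a_m\|$ is negligible compared with $\|b_m\|$'' is preserved under the bi-Lipschitz change of variables and is compatible with passing to direction limits --- but this is standard and I would simply spell it out, possibly phrasing it via point (1) of Remark \ref{remark1}.
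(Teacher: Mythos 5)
Your argument is correct and is essentially identical to the paper's own proof: both proceed by contradiction, pick a common direction $\alpha\in D(h(A))\cap D(h(B))$ realised by a sequence $\{h(b_m)\}$ with $b_m\in B$, apply condition $(SSP)$ of $h(A)$ to produce $a_m\in A$ with $\|h(a_m)-h(b_m)\|\ll\|h(a_m)\|,\|h(b_m)\|$, transfer this estimate back via the bi-Lipschitz inequalities, and extract a common limit direction in $D(A)\cap D(B)$. No gaps; the subsequence extraction and the use of Remark \ref{remark1}(1) are exactly the bookkeeping the paper performs.
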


As a corollary of this we have the following.

\begin{thm}\label{invweaktransversality}
Let $A$, $B$ be two set-germs at $0 \in \R^n$
such that $0 \in \overline{A} \cap \overline{B}$,
and let $h : (\R^n,0) \to (\R^n,0)$ be a bi-Lipschitz homeomorphism.
Suppose that $A$ or $B$ satisfies condition $(SSP)$,
and $h(A)$ or $h(B)$ satisfies condition $(SSP)$.
Then $A$ and $B$ are weakly transverse at $0 \in \R^n$ if and only 
if $h(A)$ and $h(B)$ are weakly transverse at $0 \in \R^n$.
\end{thm}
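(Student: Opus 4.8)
The plan is to deduce the theorem directly from Lemma \ref{subanalytic} by applying it once to $h$ and once to $h^{-1}$. The point is that the two separate $(SSP)$ hypotheses — one on the pair $\{A,B\}$, one on the pair $\{h(A),h(B)\}$ — are exactly what each of the two implications in the ``if and only if'' requires, so the theorem is really just a symmetrisation of the (non-symmetric) lemma.

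First I would prove the ``only if'' direction. Suppose $A$ and $B$ are weakly transverse at $0$, i.e. $D(A) \cap D(B) = \emptyset$. By hypothesis $h(A)$ or $h(B)$ satisfies condition $(SSP)$, so Lemma \ref{subanalytic} applies verbatim and yields $D(h(A)) \cap D(h(B)) = \emptyset$; that is, $h(A)$ and $h(B)$ are weakly transverse at $0$. This implication uses only the $(SSP)$ assumption on $\{h(A),h(B)\}$.

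For the ``if'' direction I would apply Lemma \ref{subanalytic} to the bi-Lipschitz homeomorphism $g := h^{-1} : (\R^n,0) \to (\R^n,0)$ together with the set-germs $A' := h(A)$ and $B' := h(B)$; since $h$ is a homeomorphism of germs, $0 \in \overline{A'} \cap \overline{B'}$. Here $g(A') = A$ and $g(B') = B$, so the hypothesis demanded by the lemma — that $g(A')$ or $g(B')$ satisfy $(SSP)$ — is precisely the assumption that $A$ or $B$ satisfies $(SSP)$. Hence if $h(A)$ and $h(B)$ are weakly transverse, i.e. $D(A') \cap D(B') = \emptyset$, the lemma gives $D(g(A')) \cap D(g(B')) = D(A) \cap D(B) = \emptyset$, so $A$ and $B$ are weakly transverse. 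Combining the two implications completes the proof.

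Since Lemma \ref{subanalytic} already carries all the analytic content, there is no serious obstacle; the only thing to be careful about is the bookkeeping of which $(SSP)$ hypothesis feeds which direction — the $(SSP)$-ness of $h(A)$ or $h(B)$ is what lets weak transversality push forward under $h$, while the $(SSP)$-ness of $A$ or $B$ is what lets it pull back under $h^{-1}$ — and to record that $h^{-1}$ is again bi-Lipschitz and a homeomorphism of germs, so the domain and closure conditions in Lemma \ref{subanalytic} are still met in the second application.
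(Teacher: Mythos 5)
Your proposal is correct and is exactly the argument the paper intends: the theorem is stated as a corollary of Lemma \ref{subanalytic}, obtained by applying that lemma once to $h$ (using the $(SSP)$ hypothesis on $h(A)$ or $h(B)$) and once to $h^{-1}$ applied to the germs $h(A)$, $h(B)$ (using the $(SSP)$ hypothesis on $A$ or $B$). Your bookkeeping of which hypothesis feeds which implication matches the roles these assumptions play in the paper.
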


\begin{proof}[Proof of Lemma.]

By hypothesis, $LD(A) \cap LD(B) = \{ 0 \}$.

Assume that $h(A)$ and $h(B)$ are not weakly transverse at $0 \in \R^n$.
Namely, there are a half line 
$\ell \subset LD(h(A)) \cap LD(h(B))$ and 
a sequence of points $\{ b_m \} \subset B$
tending to $0 \in \R^n$ such that
$\lim_{m \to \infty} {h(b_m) \over \| h(b_m) \|} = D(\ell )$.
Here $LD(\ell ) = \ell \subset LD(h(A)) \cap LD(h(B))$.

Since $h(A)$ satisfies condition $(SSP)$,
there is a sequence of points $\{ a_m \} \subset A$ such that
$$
\| h(a_m) - h(b_m) \| \ll \| h(a_m) \|, \ \| h(b_m) \|. 
$$
It follows that
\begin{equation}\label{seqdis}
\| a_m - b_m \| \ll \| a_m \|, \ \| b_m \|. 
\end{equation}
Taking a subsequence of $\{ b_m \}$ if necessary,
we may assume that
$\lim_{m \to \infty} {b_m \over \| b_m \| } = \hat{b} \in D(B)$.
By (\ref{seqdis}), 
$\hat{b} = \lim_{m \to \infty} {a_m \over \| a_m \| } \in D(A)$.
Thus it follows that $D(A) \cap D(B) \neq \emptyset$,
which contradicts the hypothesis.
Thus it follows that  $h(A)$ and $h(B)$ are weakly transverse 
at $0 \in \R^n$.
\end{proof}

\begin{rem}\label{remark5}
We cannot drop the assumption of $(SSP)$ from the above theorem.
For instance, consider Figure 1, the ``slow spiral'' bi-Lipschitz homeomorphism
pictured before. 

\end{rem}

As a corollary of Theorem \ref{invweaktransversality},
we have the following:
\begin{cor}\label{conetransversal}
Let $A$, $B$ be two set-germs at $0 \in \R^n$
such that $0 \in \overline{A} \cap \overline{B}$,
and let $h : (\R^n,0) \to (\R^n,0)$ be a bi-Lipschitz homeomorphism.
Suppose that $h(LD(A))$ satisfies condition $(SSP)$.
Then $A$ and $B$ are weakly transverse at $0 \in \R^n$ 
if and only if $h(LD(A))$  and $h(B)$ are weakly transverse 
at $0 \in \R^n$.

\end{cor}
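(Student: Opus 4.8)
The plan is to derive this as a direct consequence of Theorem~\ref{invweaktransversality}, after first recording two elementary facts: that $A$ and $LD(A)$ are weakly transverse to the same sets, and that $LD(A)$ always satisfies condition $(SSP)$. The first fact is built into the definition of weak transversality (the parenthetical remark: $A$ and $B$ are weakly transverse iff $LD(A)$ and $B$ are), and follows because $D(LD(A)) = D(L(D(A))) = D(A)$, so $D(A) \cap D(B) = \emptyset$ iff $D(LD(A)) \cap D(B) = \emptyset$. The second fact is item (1) of Remark~\ref{remark2} (also recalled just after Proposition~\ref{h(LD(A))}).

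With these in hand, the argument is short. First I would apply Theorem~\ref{invweaktransversality} to the pair of set-germs $A' := LD(A)$ and $B$, with the same bi-Lipschitz homeomorphism $h$. To invoke it I need: ($A'$ or $B$ satisfies $(SSP)$) and ($h(A')$ or $h(B)$ satisfies $(SSP)$). The first holds because $A' = LD(A)$ satisfies $(SSP)$ by Remark~\ref{remark2}(1); the second holds by hypothesis, since $h(LD(A)) = h(A')$ is assumed to satisfy condition $(SSP)$. Hence Theorem~\ref{invweaktransversality} gives: $LD(A)$ and $B$ are weakly transverse at $0$ iff $h(LD(A))$ and $h(B)$ are weakly transverse at $0$.

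It remains only to translate both sides back. On the source side, $LD(A)$ and $B$ are weakly transverse iff $A$ and $B$ are weakly transverse, by the first elementary fact above. On the target side there is nothing to translate: $h(LD(A))$ and $h(B)$ being weakly transverse is literally the conclusion we want. Chaining these equivalences yields: $A$ and $B$ are weakly transverse at $0 \in \R^n$ iff $h(LD(A))$ and $h(B)$ are weakly transverse at $0 \in \R^n$, which is the assertion of the corollary.

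I do not anticipate a genuine obstacle here, since the statement is essentially a repackaging of the preceding theorem; the only point requiring a moment's care is making sure the hypotheses of Theorem~\ref{invweaktransversality} are met with the substitution $A \rightsquigarrow LD(A)$, and that is immediate from the fact that real tangent cones always satisfy $(SSP)$. One could alternatively prove it directly by repeating the proof of Lemma~\ref{subanalytic} with $A$ replaced by $LD(A)$ throughout and using $D(LD(A)) = D(A)$, but routing through Theorem~\ref{invweaktransversality} is cleaner and avoids duplicating the sequence-chasing.
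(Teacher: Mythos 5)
Your proposal is correct and is exactly the route the paper intends: the corollary is stated as an immediate consequence of Theorem \ref{invweaktransversality} applied to the pair $(LD(A),B)$, using that $LD(A)$ always satisfies $(SSP)$ and that $D(LD(A))=D(A)$ (the parenthetical in the definition of weak transversality). Nothing is missing.
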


The following is a simple corollary of Theorem \ref{samedimension}.
\begin{cor}
Let $h:(\R^n,0)\to (\R^n,0)$ be a bi-Lipschitz homeomorphism such that $h$ 
satisfies 
condition semiline-(SSP) and $A, B \subset \R^n$ two arbitrary set-germs at 
$0\in \R^n$  such that 
$0 \in \overline{A} \cap \overline{B}$. Then $A$ and $B$ are weakly transverse at 
$0\in \R^n$ if and only if $h(A)$ and $h(B)$ are weakly transverse at $0\in \R^n$.
\end{cor}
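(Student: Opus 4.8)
The plan is to push everything through the induced sphere map $\overline h$ supplied by Theorem \ref{samedimension}. Recall that, by definition, $A$ and $B$ are weakly transverse at $0\in\R^n$ precisely when $D(A)\cap D(B)=\emptyset$, and likewise $h(A),h(B)$ are weakly transverse precisely when $D(h(A))\cap D(h(B))=\emptyset$. So the whole statement is the assertion that $D(A)\cap D(B)=\emptyset$ if and only if $D(h(A))\cap D(h(B))=\emptyset$.

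First I would record that since $h$ satisfies condition semiline-(SSP), so does $h^{-1}$ (the corollary following Proposition \ref{semiline} in the bi-Lipschitz case), so the hypotheses of Theorem \ref{samedimension} are met. That theorem produces a bi-Lipschitz homeomorphism $\overline h:\R^n\to\R^n$ restricting to $\overline h:S^{n-1}\to S^{n-1}$ with $\overline h^{-1}=\overline{h^{-1}}$; in particular $\overline h$ is a bijection of $S^{n-1}$ (cf.\ Remark \ref{inducedmap}). The same theorem gives, for every set-germ $C$ at $0$ with $0\in\overline C$, the identity $\overline h(D(C))=D(h(LD(C)))=D(h(C))$. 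Applying this to $C=A$ and $C=B$ yields $\overline h(D(A))=D(h(A))$ and $\overline h(D(B))=D(h(B))$.

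Now, because $\overline h$ is a bijection of $S^{n-1}$, it commutes with intersections and sends the empty set to the empty set and only the empty set: $\overline h(D(A)\cap D(B))=\overline h(D(A))\cap\overline h(D(B))=D(h(A))\cap D(h(B))$, and the left-hand side is empty if and only if $D(A)\cap D(B)$ is empty. Chaining these equalities proves both implications simultaneously, which is the desired equivalence.

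There is essentially no obstacle here: all the substance is concentrated in Theorem \ref{samedimension}, and once $\overline h$ is known to be an honest bijection intertwining $D(\cdot)$ with $D(h(\cdot))$, the corollary is immediate. The only point worth flagging is that the semiline-(SSP) hypothesis is genuinely needed — without it one controls only $\dim D(A)=\dim D(h(A))$ (or mere inclusions), not the emptiness of $D(A)\cap D(B)$, and the slow-spiral bi-Lipschitz homeomorphism of Figure \ref{logspiral} (see Remark \ref{remark5}) shows weak transversality is not preserved by a general bi-Lipschitz homeomorphism.
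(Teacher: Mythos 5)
Your proof is correct and is exactly the argument the paper intends: the corollary is stated as "a simple corollary of Theorem \ref{samedimension}", and you derive it from that theorem via the identity $\overline h(D(A))=D(h(A))$ together with the fact that $\overline h$ is a bijection of $S^{n-1}$, so it preserves intersections and emptiness. You also correctly handle the one small point needing care, namely that semiline-(SSP) for a bi-Lipschitz $h$ passes to $h^{-1}$, so the hypotheses of Theorem \ref{samedimension} are indeed satisfied.
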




\subsection{Applications to complex analytic varieties}\label{cxvar}

Having developed our transversality theory specifically to  deal with  the singular 
situations, let us apply (illustratively) the above results to arbitrary 
complex analytic 
varieties.
We first give an important proposition.

\begin{prop}\label{drop}
Let $A, \ B \subset \C^n$ be set-germs at $0\in \C^n$such that 
$0 \in \overline{A} \cap \overline{B}$.
If $LD(A) \cap LD^*(B) = \{ 0 \}$,
then $LD^*(A) \cap LD^*(B) = \{ 0 \}$.
\end{prop}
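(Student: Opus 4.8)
The plan is to use the characterization $LD^*(A) = \C D(A) = S^1 D(A) \cdot \R_{\geq 0}$ from Lemma \ref{complex cone} together with Remark \ref{remark0}, and to argue that multiplying $LD^*(B)$ by $S^1$ cannot create a new nonzero intersection with $LD^*(A)$ if none exists with the smaller set $LD(A)$. More precisely, I would first observe that $LD^*(B) = S^1 \cdot LD(B)$: indeed $LD^*(B) = \C D(B)$ by Lemma \ref{complex cone}, and $\C D(B) = \R_{\geq 0} \cdot S^1 \cdot D(B) = S^1 \cdot (\R_{\geq 0} D(B)) = S^1 \cdot LD(B)$, since $S^1$ absorbs the sign. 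Similarly $LD^*(A) = S^1 \cdot LD(A)$.

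Next I would argue by contradiction. Suppose $LD^*(A) \cap LD^*(B) \neq \{0\}$, so there is a nonzero $v$ with $v \in LD^*(A)$ and $v \in LD^*(B)$. Writing $v = \lambda w$ with $\lambda \in S^1$ and $w \in LD(A)$ (using $LD^*(A) = S^1 \cdot LD(A)$ and scaling the modulus into $w$), and separately $v = \mu u$ with $\mu \in S^1$, $u \in LD(B)$ (using $LD^*(B) = S^1 \cdot LD(B)$), we get $w = \lambda^{-1} v \in \lambda^{-1} LD^*(B) = LD^*(B)$ because $LD^*(B)$ is $S^1$-invariant. So $w$ is a nonzero element of $LD(A) \cap LD^*(B)$, contradicting the hypothesis $LD(A) \cap LD^*(B) = \{0\}$. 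Hence $LD^*(A) \cap LD^*(B) = \{0\}$.

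The step requiring the most care is the bookkeeping that $LD^*(B)$ really is invariant under multiplication by $S^1$; this is exactly the content of $LD^*(B) = \C D(B)$ from Lemma \ref{complex cone} (equivalently $S^1 D(B) = D(B)$ as noted in Remark \ref{remark0}), so it is available. Everything else is elementary manipulation of cones, and there are no limiting arguments needed here — the passage to limits was already done in establishing $LD^*(A) = \C D(A)$. I should be a little careful that the decomposition $v = \lambda w$ with $w \in LD(A)$ is legitimate: since $v \in LD^*(A) = \C D(A)$, we have $v = c a$ for some $c \in \C$, $a \in D(A)$; then $w := |c| a \in \R_{\geq 0} D(A) = LD(A)$ and $\lambda := c/|c| \in S^1$ works, so $v = \lambda w$ as claimed. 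This is the only place a nonzero $v$ is used, which is why the origin has to be excluded in the statement.
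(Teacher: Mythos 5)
Your proof is correct and follows essentially the same route as the paper: given a nonzero $v\in LD^*(A)\cap LD^*(B)$, use Lemma \ref{complex cone} to rescale $v$ by a nonzero complex number into $LD(A)$ while staying inside the $\C$-invariant set $LD^*(B)=\C D(B)$, contradicting the hypothesis. (Your parenthetical appeal to $S^1D(B)=D(B)$ from Remark \ref{remark0} is unnecessary and not what that remark asserts, but it plays no role since $\C D(B)$ is automatically $S^1$-invariant.)
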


\begin{proof}
Assume that there exists $v \in LD^*(A) \cap LD^*(B)$
such that $v \neq 0 \in \C^n$.
Then, by Lemma \ref{complex cone}, there is a non-zero
$c \in \C$ such that $c v \in LD(A) \cap LD^*(B)$.
This contradicts the hypothesis.
Thus the statement follows.
\end{proof}

As a corollary of Proposition \ref{drop} and Lemma \ref{coincidence},
we have

\begin{cor}\label{equitransverse}
Let $0 \in V \subset \C^n$ be an analytic variety,
and let $A \subset \C^n$ such that $0 \in \overline{A}$.
Then $LD^*(A) \cap LD^*(V) = \{ 0 \}$ if and only if
$LD(A) \cap LD(V) = \{ 0 \}$.
\end{cor}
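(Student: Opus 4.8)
The plan is to deduce this from Proposition \ref{drop} together with Lemma \ref{coincidence}, which identifies the real and complex tangent cones of an analytic variety. First I would observe that the ``only if'' direction is immediate: since $LD(A) \subseteq LD^*(A)$ and $LD(V) \subseteq LD^*(V)$ (the real tangent cone is always contained in the complex one, as $LD^*(\cdot) = \C D(\cdot)$ by Lemma \ref{complex cone}), the hypothesis $LD^*(A) \cap LD^*(V) = \{0\}$ forces $LD(A) \cap LD(V) = \{0\}$ trivially.

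For the ``if'' direction, suppose $LD(A) \cap LD(V) = \{0\}$. Since $V$ is an analytic variety containing $0$, Lemma \ref{coincidence} gives $LD^*(V) = LD(V)$. Substituting this into the hypothesis yields $LD(A) \cap LD^*(V) = \{0\}$. Now Proposition \ref{drop}, applied with $B = V$, gives exactly $LD^*(A) \cap LD^*(V) = \{0\}$, which is the desired conclusion. So the argument is essentially a two-line chaining of the cited results, the key move being the replacement of $LD^*(V)$ by $LD(V)$ which is legitimate precisely because $V$ is analytic.

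I do not anticipate any real obstacle here; the only subtlety worth flagging is that the symmetry between $A$ and $V$ is broken — we are only assuming $V$ (not $A$) is an analytic variety, so Lemma \ref{coincidence} may be invoked only on the $V$ side, and correspondingly Proposition \ref{drop} must be applied with $A$ playing its ``$A$'' role and $V$ playing its ``$B$'' role (not the other way around). As long as one keeps track of which set carries the analytic hypothesis, the proof goes through verbatim.
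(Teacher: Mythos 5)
Your proof is correct and follows exactly the route the paper intends: the corollary is stated there precisely as a consequence of Proposition \ref{drop} and Lemma \ref{coincidence}, with $V$ in the role of $B$. Your remark about the asymmetry (only $V$ is analytic, so Lemma \ref{coincidence} applies only on that side) is the right point to flag, and the trivial direction via $LD(\cdot)\subseteq LD^*(\cdot)$ is also as expected.
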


Let $0 \in V, \ W \subset \C^n$ be analytic varieties,
and let $A$ be a subset of $\C^n$ such that $0 \in \overline{A}$.
Suppose that there exists a bi-Lipschitz homeomorphism
$h : (\C^n,0) \to (\C^n,0)$ such that $h(V) = W$.
Then, by Lemma \ref{subanalytic}, Lemma \ref{coincidence}
and Proposition \ref{drop}, we can see the following:

\begin{thm}\label{transversalanalytic}
$LD^*(A) \cap LD^*(V) = \{ 0 \}$ if and only if
$LD^*(h(A)) \cap LD^*(W) = \{ 0 \}$.
\end{thm}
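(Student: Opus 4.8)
The plan is to reduce the statement about the complex tangent cones to one about real tangent cones, where the machinery of condition $(SSP)$ (Lemma \ref{subanalytic}) applies directly, and then translate back. The key observation is that $V$ and $W=h(V)$ are analytic varieties, so by Lemma \ref{coincidence} we have $LD^*(V)=LD(V)$ and $LD^*(W)=LD(W)$; moreover, by Lemma \ref{complex variety}, analytic varieties satisfy condition $(CSSP)$, hence condition $(SSP)$, so $LD^*(V)=LD(V)$ and $LD^*(W)=LD(W)$ both satisfy condition $(SSP)$. These are exactly the hypotheses needed to invoke Lemma \ref{subanalytic} with the roles of the two sets played by $A$ (or $LD(A)$) and $V$ (or $W$).

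First I would prove the ``only if'' direction. Assume $LD^*(A)\cap LD^*(V)=\{0\}$. By Corollary \ref{equitransverse} this is equivalent to $LD(A)\cap LD(V)=\{0\}$, i.e. $D(A)\cap D(V)=\emptyset$, so $A$ and $V$ are weakly transverse at $0$. Since $h(V)=W$ is an analytic variety it satisfies condition $(SSP)$, and $LD(W)=LD^*(W)$; applying Lemma \ref{subanalytic} (with $B=V$, noting $h(B)=W$ satisfies $(SSP)$) gives $D(h(A))\cap D(h(V))=\emptyset$, that is $LD(h(A))\cap LD(W)=\{0\}$. Since $W$ is an analytic variety, Corollary \ref{equitransverse} applies again (with the variety $W$ in place of $V$ and the set $h(A)$ in place of $A$) to upgrade this to $LD^*(h(A))\cap LD^*(W)=\{0\}$.

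The ``if'' direction is symmetric: one applies the same argument to the bi-Lipschitz homeomorphism $h^{-1}:(\C^n,0)\to(\C^n,0)$, the analytic variety $W$, and the set $h(A)$, using $h^{-1}(W)=V$ and the fact that $V$ is an analytic variety (hence satisfies condition $(SSP)$ and $LD(V)=LD^*(V)$). This yields $LD^*(A)\cap LD^*(V)=\{0\}$ from $LD^*(h(A))\cap LD^*(W)=\{0\}$, completing the equivalence.

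I do not expect a serious obstacle here: the proof is essentially an orchestration of the three cited results (Lemma \ref{subanalytic}, Lemma \ref{coincidence}, Proposition \ref{drop}, packaged as Corollary \ref{equitransverse}), exactly as the sentence preceding the theorem statement suggests. The one point requiring a little care is checking that the $(SSP)$ hypothesis of Lemma \ref{subanalytic} is met by the \emph{image} set: it is $h(V)=W$ (resp. $h^{-1}(W)=V$) that must satisfy $(SSP)$, and this holds precisely because $W$ (resp. $V$) is assumed to be an analytic variety. One should also note that $LD(A)$ need not satisfy $(SSP)$ for this argument, and is not required to---Lemma \ref{subanalytic} only needs the \emph{image} of one of the two sets to have condition $(SSP)$, which here is supplied by the analytic variety on the other side.
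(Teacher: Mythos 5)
Your proof is correct and is essentially the paper's own argument: the paper gives no written proof, merely stating that the theorem follows from Lemma \ref{subanalytic}, Lemma \ref{coincidence} and Proposition \ref{drop}, and your orchestration (pass to real cones via Corollary \ref{equitransverse}, apply Lemma \ref{subanalytic} using that the image variety $W$, resp.\ $V$ for $h^{-1}$, satisfies $(SSP)$, then pass back) is exactly that orchestration. The only nitpick is a citation slip: the fact that analytic varieties satisfy $(SSP)$ (or $(CSSP)$) is not Lemma \ref{complex variety} but rather the remark following Definition \ref{CSSP}, or simply Remark \ref{remark2}(2) since analytic varieties are subanalytic.
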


We consider also the application to the singular points sets 
of complex analytic varieties.
Let $V$, $W$, $A$ and $h$ be the same as above.
Let us denote by $\Sigma (V)$ (resp. $\Sigma (W)$) the singular points
set of $V$ (resp. $W$). 
Note that $h(\Sigma (V)) = \Sigma (W)$.

By Lemma \ref{subanalytic}, we can easily see the following:

\begin{prop}\label{singular set}
$A$ and $\Sigma (V)$ are weakly transverse at $0 \in \C^n$
if and only if $h(A)$ and $\Sigma (W)$ are weakly transverse
at $0 \in \C^n$.
\end{prop}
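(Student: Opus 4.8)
The plan is to derive Proposition \ref{singular set} directly from Lemma \ref{subanalytic} applied to the pair of sets $A$ and $\Sigma(V)$, using the fact that $\Sigma(V)$, being a complex analytic variety, is subanalytic and hence satisfies condition $(SSP)$ (Remark \ref{remark2} (2)), and that the same holds for its image $\Sigma(W) = h(\Sigma(V))$. The two conditions in Theorem \ref{invweaktransversality} ("$A$ or $B$ satisfies $(SSP)$" and "$h(A)$ or $h(B)$ satisfies $(SSP)$") are thus both met by taking $B = \Sigma(V)$: indeed $\Sigma(V)$ satisfies $(SSP)$ and $h(\Sigma(V)) = \Sigma(W)$, being analytic, also satisfies $(SSP)$.

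First I would recall that $h(\Sigma(V)) = \Sigma(W)$, which holds because a bi-Lipschitz homeomorphism carrying $V$ onto $W$ must carry the locus where $V$ fails to be a manifold onto the corresponding locus of $W$ (the singular set is a bi-Lipschitz-topological invariant of the germ). Next I would invoke that $\Sigma(V)$ and $\Sigma(W)$ are complex analytic (a classical fact: the singular locus of a complex analytic variety is again a complex analytic subvariety), hence subanalytic, hence satisfy condition $(SSP)$ by Remark \ref{remark2} (2). Then I would simply apply Theorem \ref{invweaktransversality} with the roles $A \rightsquigarrow A$, $B \rightsquigarrow \Sigma(V)$: weak transversality of $A$ and $\Sigma(V)$ at $0$ is equivalent to weak transversality of $h(A)$ and $h(\Sigma(V)) = \Sigma(W)$ at $0$. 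Since weak transversality is by definition the statement $D(A) \cap D(\Sigma(V)) = \emptyset$, and this is exactly what Lemma \ref{subanalytic} transfers (in both directions, by applying the lemma to $h$ and to $h^{-1}$), the proposition follows immediately.

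I do not anticipate a genuine obstacle here; the proposition is essentially a specialization of the already-proven Theorem \ref{invweaktransversality}. The only point requiring any care is the bookkeeping of which set plays the role of $B$ and the verification that both the set and its image satisfy $(SSP)$ — both follow at once from subanalyticity of singular loci of analytic varieties. If one wanted a fully self-contained argument avoiding Theorem \ref{invweaktransversality}, one could instead apply Lemma \ref{subanalytic} twice: once to $h$ with $h(\Sigma(V)) = \Sigma(W)$ satisfying $(SSP)$ to get one implication, and once to $h^{-1}$ with $\Sigma(V)$ satisfying $(SSP)$ to get the converse; this is the "easily see" that the authors presumably intend.
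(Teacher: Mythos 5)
Your argument is exactly the paper's: the authors prove this proposition by noting $h(\Sigma(V))=\Sigma(W)$ and then invoking Lemma \ref{subanalytic} (applied to $h$ and to $h^{-1}$, with the singular loci satisfying $(SSP)$ because they are complex analytic, hence subanalytic). The only quibble is your parenthetical gloss that $\Sigma(V)$ is a ``bi-Lipschitz-topological invariant'': it is emphatically not a topological invariant (a cuspidal curve is homeomorphic to a line), and the genuinely Lipschitz-geometric fact $h(\Sigma(V))=\Sigma(W)$ is simply asserted without proof in the paper as well, so you are on the same footing as the authors there.
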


Let us apply our proposition \ref{closed cone} 
to complex analytic hypersurfaces.
Let $0 \in V, \ W \subset \C^n$ be analytic hypersurfaces,
and let the ideals $I(V)$ and $I(W)$ of $V$ and $W$ be generated
by complex analytic functions $f$ and $g$, respectively.
Let $f_d$ and $g_k$ be the initial homogeneous forms of 
$f$ and $g$, respectively. 

We note that for a hypersurface $V=\{f=0\}$, as above, we have $LD(V)=LD^*(V)=\{f_d=0\}.$
Suppose that there exists a bi-Lipschitz homeomorphism
$h : (\C^n,0) \to (\C^n,0)$ such that $h(V) = W$.
Then, by Lemma \ref{directionrel}, we have

\vspace{3mm}

\noindent {\bf Observation 1.}$LD(h(LD(V) )) = LD(h(V))$. 

\vspace{3mm}

In addition, by Proposition \ref{h(LD(A))}, we have

\vspace{3mm}

\noindent {\bf Observation 2.} $h(LD(V))$ satisfies condition $(SSP)$.

\vspace{3mm}

Using these facts, we can show the following:

\begin{cor}\label{corollary1}
Let $A \subset \C^n$ be a set-germ at $0 \in \C^n$
such that $0 \in \overline{A}$.
Then we have
$$
LD(h(LD(A) \cap LD(V) ))
= LD(h(LD(A))) \cap LD(W) .
$$
\end{cor}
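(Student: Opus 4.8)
The plan is to apply Proposition \ref{closed cone} in the form given by Theorem \ref{UV}, with the roles played by $U = LD(A)$ and $V = LD(V)$ (the real tangent cones), and then to translate the resulting equality of direction sets into the claimed equality of real half-cones. The first step is to check the hypotheses of Theorem \ref{UV}. We need $D(LD(A) \cap LD(V)) = D(LD(A)) \cap D(LD(V))$; since $LD(A)$ and $LD(V)$ are both closed cones this is exactly the content of Proposition \ref{closed cone} applied to the identity homeomorphism, so it holds automatically. We also need $LD(A) \cap LD(V)$ to satisfy condition $(SSP)$: this is true because $LD(A) \cap LD(V)$ is itself a closed cone (an intersection of closed cones), and cones satisfy $(SSP)$ by Remark \ref{remark2}(1). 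Finally we need $h(LD(A))$ to satisfy condition $(SSP)$; but here one must be slightly careful, since Proposition \ref{h(LD(A))} gives this for $h(LD(V))$ (that is Observation 2), not for $h(LD(A))$ with $A$ an arbitrary set-germ.

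So the correct application is Theorem \ref{UV} with $U := LD(V)$ and $V := LD(A)$ (swapping the two roles): then the distinguished set whose image must satisfy $(SSP)$ is $h(U) = h(LD(V))$, which is exactly Observation 2, and the other hypothesis $D(U \cap V') = D(U) \cap D(V')$ again follows from Proposition \ref{closed cone} for the identity since both are closed cones, while $U \cap V' = LD(V) \cap LD(A)$ satisfies $(SSP)$ for the cone reason above. Theorem \ref{UV} then yields
$$
D(h(LD(A) \cap LD(V))) = D(h(LD(A))) \cap D(h(LD(V))).
$$

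The remaining step is to pass from direction sets to half-cones. Taking $L(-)$ of both sides and using that $L(D(X)) = LD(X)$ by definition, the left side becomes $LD(h(LD(A) \cap LD(V)))$, which is what we want on the left. For the right side we must show $L(D(h(LD(A))) \cap D(h(LD(V)))) = LD(h(LD(A))) \cap LD(W)$. First, $LD(h(LD(V))) = LD(h(V)) = LD(W)$ by Observation 1 together with $h(V)=W$; equivalently $D(h(LD(V))) = D(W)$. Second, for any two subsets $S,T$ of the sphere one has $L(S \cap T) = L(S) \cap L(T)$ (a point $ta$ with $t>0$ lies in $L(S\cap T)$ iff $a \in S\cap T$ iff $ta \in L(S)$ and $ta \in L(T)$; the origin lies in all three). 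Applying this with $S = D(h(LD(A)))$ and $T = D(h(LD(V))) = D(W)$ gives $L(D(h(LD(A))) \cap D(W)) = LD(h(LD(A))) \cap LD(W)$, which is the right-hand side of the corollary. Combining the two displays completes the proof.

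\textbf{Main obstacle.} The only genuine subtlety is bookkeeping the two "roles" in Theorem \ref{UV} so that the $(SSP)$ hypothesis lands on $h(LD(V))$ (supplied by Proposition \ref{h(LD(A))}/Observation 2) rather than on $h(LD(A))$, which we have no right to assume satisfies $(SSP)$ for a general set-germ $A$; everything else is the routine verification that intersections of closed cones are closed cones and hence satisfy $(SSP)$, plus the elementary commutation of $L(-)$ with finite intersections of subsets of the sphere.
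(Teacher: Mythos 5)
Your proof is correct and follows essentially the same route as the paper: the paper simply invokes Proposition \ref{closed cone} directly (whose hypotheses are tailored to closed cones, so the role of the $(SSP)$ set is $h(LD(V))$ via Observation 2) together with Observation 1, which is exactly the bookkeeping you carry out by hand through Theorem \ref{UV}. Your explicit verification that the $(SSP)$ hypothesis must land on $h(LD(V))$ rather than $h(LD(A))$, and the passage from direction sets back to half-cones, are both correct and are left implicit in the paper's two-line argument.
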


\begin{proof}
By Observation 2, $h(LD(V) )$ satisfies condition $(SSP)$.
Then it follows from Proposition \ref{closed cone} 
and Observation 1 that

\begin{eqnarray*}
LD(h(LD(A) \cap LD(V) )) &=& LD(h(LD(A))) \cap 
LD(h(LD(V))) \\
&=& LD(h(LD(A))) \cap LD(W) .
\end{eqnarray*}

\end{proof}
 We end this section with an application to analytic curves.  

Let $W_1, W_2$ be the 
set-germs of two analytic curves at $0\in \C^n$. 
 
 Then $LD(W_1)=\cup_{j=1}^s m_j, m_i\cap m_j=\{0\}, i\neq j ,\,  
LD(W_2)=\cup_{j=1}^t l_j, l_i\cap\l_j=\{0\}, i\neq j ,\, s,t \in \N$, 
where $m_j, l_j$ are complex lines through $0\in \C^n$.
 \begin{prop}
  Suppose that there is a bi-Lipschitz homeomorphism $h:(\C^n,0)\to(\C^n,0)$ such 
that 
$h(W_1)=W_2$. Then $s=t$.
 \end{prop}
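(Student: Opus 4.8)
The plan is to show that a bi-Lipschitz homeomorphism $h$ with $h(W_1) = W_2$ forces the real tangent cones $LD(W_1)$ and $LD(W_2)$ to have the same number of irreducible components (complex lines), by counting them via the dimension of common direction sets with a generic auxiliary line. First I would note that since $W_1, W_2$ are analytic curves, both are subanalytic and satisfy condition $(SSP)$ (Remark \ref{remark2}(2)), and moreover $LD(W_i) = LD^*(W_i)$ by Lemma \ref{coincidence}, so $LD(W_1) = \bigcup_{j=1}^s m_j$ and $LD(W_2) = \bigcup_{j=1}^t l_j$ as stated. The key observation is that the $m_j$ (resp. $l_j$) are distinct complex lines, hence $D(W_1) = \bigcup_{j=1}^s (m_j \cap S^{2n-1})$ is a disjoint union of $s$ circles, and likewise $D(W_2)$ is a disjoint union of $t$ circles.

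The heart of the argument is to recover $s$ from bi-Lipschitz-invariant data. My plan is to pick a complex line $L$ through the origin that is in ``general position'': transverse to every $m_j$ and every $l_j$, and also such that $h(L)$ and $h^{-1}(L')$ behave well. A cleaner route: for a complex line $L$ with $L \cap LD(W_1) = \{0\}$, the set $LD^*(L) = L$ meets $LD^*(W_1)$ only at $0$, so by Theorem \ref{main theorem} applied to $h$ (using that $W_1, W_2 = h(W_1)$, and choosing $L$ so that $h(L)$ is also controllable — here I would instead run the argument on the tangent cones directly). Actually the slickest approach uses Corollary \ref{corollary1} or Proposition \ref{closed cone}: take $L$ a complex line meeting $LD(W_1)$ only at $0$; since $h(LD(W_1))$ satisfies $(SSP)$ by Proposition \ref{h(LD(A))} (Observation 2), and $LD(W_1)\cap L = \{0\}$, I want to conclude something about $LD(W_2) = LD(h(W_1)) = LD(h(LD(W_1)))$ (Observation 1). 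The point is that the number of components of $LD(W_i)$ equals the maximal $k$ such that a small perturbation splits $D(W_i)$ into $k$ pieces — but more directly, $s = \dim_{\R}$-type invariants won't separate $s$ from $t$ since all components are $2$-real-dimensional. Instead I would count components using connected components of $D(W_i)$ and the fact, from Theorem \ref{main theorem}, that for a generic complex $(n-1)$-plane $H$, $\dim(D(W_i) \cap D(H))$ detects transversality, while for a generic complex line $L$ through $0$ one gets $D(L) \cap D(W_i) = \emptyset$; the component count comes from intersecting with a generic complex hyperplane $H$: each $m_j$ meets $H$ in a point, so $LD(W_1) \cap H$ is $s$ points (as a germ), i.e. $LD(W_1) \cap LD(H)$ has $s$ ``branches.''

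The main obstacle I anticipate is that bi-Lipschitz maps do not preserve hyperplanes or the number of intersection branches directly; one must transport the hyperplane $H$ carefully. My plan to circumvent this: work entirely with $LD$'s. Set $C_1 = LD(W_1)$, $C_2 = LD(W_2)$. By Observations 1 and 2, $LD(h(C_1)) = C_2$ and $h(C_1)$ satisfies $(SSP)$; symmetrically for $h^{-1}$. Now for a complex hyperplane $H$ in general position relative to $C_1$, we have $D(C_1 \cap H) = D(C_1) \cap D(H)$ trivially (both are cones), so by Proposition \ref{closed cone} (with $U = C_1$, $V = H$, noting $h(C_1)$ has $(SSP)$), $D(h(C_1 \cap H)) = D(h(C_1)) \cap D(h(H)) = D(C_2) \cap D(h(H))$. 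The left side is the direction set of $h$ applied to $s$ distinct complex lines (the $m_j \cap H$ data) — actually $C_1 \cap H = \{0\}$ generically since lines meet a hyperplane... I realize the correct invariant is: $s = \#\{\text{connected components of } S^{2n-1}\cap C_1\} = \#\pi_0(D(W_1))$, and I would show $h$ induces (via the $(SSP)$ property and Lemma \ref{presdir}/Lemma \ref{subanalytic}) a bijection between $\pi_0(D(W_1))$ and $\pi_0(D(W_2))$: each circle component $m_j \cap S^{2n-1}$ maps under the ``direction correspondence'' to a subset of $D(W_2)$ that, by the $(SSP)$-relative machinery, is itself a union of whole circle-components, and running $h^{-1}$ shows it is exactly one, giving $s \le t$; symmetry gives $s = t$. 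The delicate point to nail down is precisely this last bijectivity of components, for which I would invoke the $(SSP)$ transfer exactly as in the proof of Lemma \ref{subanalytic}, applied componentwise to the connected subanalytic pieces $W_1^{(j)}$ of $W_1$ with $D(W_1^{(j)}) = m_j \cap S^{2n-1}$, using Remark \ref{remark2}(7).
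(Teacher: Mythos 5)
Your final paragraph arrives at essentially the paper's argument, and that argument is correct; but you reach it only after discarding several routes (generic lines, generic hyperplanes, dimension counts) that indeed would not have worked, and the one step you flag as delicate is attributed to the wrong tool. The paper's proof has three ingredients: (i) the tangent cone of an irreducible branch of a complex analytic curve is a single complex line; (ii) distinct branches of a curve germ meet only at the origin and each branch minus the origin is connected, so $h$, being a homeomorphism of $W_1\setminus\{0\}$ onto $W_2\setminus\{0\}$, induces a bijection between the branches of $W_1$ and those of $W_2$; (iii) by Theorem \ref{transversalanalytic} (equivalently, Lemma \ref{subanalytic} applied to a pair of branches, which are analytic and hence satisfy $(SSP)$), two branches share their tangent line if and only if their images do. Together these descend the branch bijection to a bijection between the $s$ tangent lines of $W_1$ and the $t$ tangent lines of $W_2$. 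Your ``direction correspondence'' on $\pi_0(D(W_i))$ is exactly this, so the approach is the same as the paper's.

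Two points to repair in your write-up. First, you justify the claim that $D(h(W_1^{(j)}))$ is a union of \emph{whole} circle components ``by the $(SSP)$-relative machinery''; that machinery only gives $D(h(W_1^{(j)}))=D(h(m_j))$ (Lemma \ref{directionrel}), and nothing forces the direction set of a bi-Lipschitz image of a complex line to be $S^1$-invariant. What actually makes it a union of whole circles is ingredient (ii): $h(W_1^{(j)})$ is a union of whole branches of $W_2$, and each such branch contributes the full circle $l_k\cap S^{2n-1}$ by (i). This topological fact about branches is indispensable and should be stated explicitly, as the paper does when it moves the sequences $h(a_i)$, $h(b_i)$ into distinct irreducible components. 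Second, well-definedness of the induced map $m_j\mapsto l_{k(j)}$ gives $s\ge t$ (the map is surjective because the sets $h(W_1^{(j)})$ cover $W_2$), not $s\le t$ as you wrote; the symmetric argument for $h^{-1}$ then yields the opposite inequality, exactly as in the paper.
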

\begin{proof}
We are going to use the known fact that the tangent cone of an irreducible complex 
curve 
is just a complex line.
We know that 
$$\cup_{j=1}^t l_j=LD^*(W_2)=LD(W_2)=LD(h(LD(W_1)))=\cup_{j=1}^s LD^*(h(m_j)).$$
This shows that for any $j, 1 \le j \le s$ , $LD^*(h(m_j))$ consists of some lines 
$l_k$. 
We will show that we cannot have more than one $l_k$. Indeed assume that $l_1, l_2$ 
are 
in $LD^*(h(m_1))$, for convenience.
This would imply that there are sequences $a_i, b_i\in W_1$  realising the 
direction 
$m_1$ so that their images $h(a_i), h(b_i)$ realise $l_1$ and $l_2$ respectively. 
As  
$l_1, l_2$ are distinct directions, following the cited result it follows that the 
sequences $h(a_i)$ and $h(b_i)$ are in different irreducible components of $W_2$, 
say 
in $V_1$ and in $V_2$ respectively. As $h$ is a homeomorphism it follows that 
$a_i\in h^{-1}(V_1)$ and $b_i\in h^{-1}(V_2)$ are also on different irreducible 
components of $W_1$. This contradicts our Theorem \ref{transversalanalytic}. It 
follows 
that  each $LD^*(h(m_j))$ consists  exactly of one line and therefore $s\ge t$ . By 
symmetry we conclude our proof.
\end{proof}
\begin{rem}
It is not difficult to see that the above result does not hold for  $h$ merely a 
homeomorphism.
\end{rem}

\section{(SSP) mappings}
In this section we introduce and investigate the notion of (SSP) mappings.

\begin{defn} Let $A \subset \R^m$ be a set-germ at $0\in \R^m$ such that 
$0\in \overline{A}$ 
and  $B \subset \R^n$ a set-germ at $0\in \R^n$ such that $0\in \overline{B}$. 
Let $h : (A,0) \to (B,0)$ be
 an arbitrary map (or a homeomorphism) germ.
We say that  $h$ is an $(SSP)$ 
{\em map ( $(SSP)$ homeomorphism)} if the graph of $h$ satisfies condition $(SSP)$ 
at $(0,0) \in \R^m \times \R^n$.
\end{defn}

Subanalytic maps and definable maps in an o-minimal structure  are examples of (SSP) 
maps. Also the Cartesian 
product of two (SSP) maps is an (SSP) map. By Theorem 4.19  weak diffeomorphisms are 
also (SSP) homeomorphisms. A function $h : (\R,0) \to (\R.0)$ whose graph is a zigzag 
given
in Example 2.32 is also an (SSP) map.
(Of course, the zigzag should be expanded to the negative part.) 

We next consider the image of a set satisfying
condition $(SSP)$ by an $(SSP)$ map.
Let $\pi : (\R^n ,0) \to (\R^{n-1},0)$ be the projection
on the first $(n - 1)$ coordinates,
and let $A$ be a set-germ at $0 \in \R^n$
such that $0 \in \overline{A}$.
Then the following result holds:

\begin{prop}\label{projection}
Suppose that $\ker \pi$ and $A$ are weakly transverse
at $0 \in \R^n$.
Then we have

(1) $\pi (LD(A)) = LD(\pi (A))$.

(2) If $A$ satisfies condition $(SSP)$,
then so does $\pi (A)$.
\end{prop}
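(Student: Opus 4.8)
The plan is to work directly with sequences and exploit the weak transversality hypothesis to control how norms behave under $\pi$. The key quantitative observation is this: if $\ker\pi$ and $A$ are weakly transverse, then $D(A)\cap D(\ker\pi)=\emptyset$, and since $D(A)$ and $D(\ker\pi)$ are (relatively) compact and disjoint in $S^{n-1}$, there is a constant $\delta>0$ such that every $x\in A$ close enough to $0$ satisfies $\|\pi(x)\|\ge \delta\|x\|$. Indeed, otherwise one could extract a sequence $x_i\in A$, $x_i\to 0$, with $\|\pi(x_i)\|/\|x_i\|\to 0$; passing to a subsequence so that $x_i/\|x_i\|\to a\in D(A)$, one would get $\pi(a)=0$, i.e. $a\in D(A)\cap D(\ker\pi)$, a contradiction. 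This ``openness of transversality'' estimate is the workhorse for both parts, and I expect it to be the only genuinely nontrivial step; everything else is bookkeeping.

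For part (1), the inclusion $LD(\pi(A))\subseteq \pi(LD(A))$ is essentially automatic: given $a\in D(\pi(A))$, choose $x_i\in A$ with $\pi(x_i)/\|\pi(x_i)\|\to a$; using the estimate $\|\pi(x_i)\|\ge\delta\|x_i\|$ we have $\|x_i\|\le\delta^{-1}\|\pi(x_i)\|$, so after passing to a subsequence $x_i/\|x_i\|\to b\in D(A)$, and by continuity of $\pi$ and homogeneity, $\pi(b)$ is a nonnegative multiple of $a$; hence $a\in\pi(LD(A))$ (taking $\pi$ of the half-cone). Conversely, for $\pi(LD(A))\subseteq LD(\pi(A))$, take $b\in D(A)$ with $\pi(b)\ne 0$ (if $\pi(b)=0$ there is nothing to prove since $0\in LD(\pi(A))$), pick $x_i\in A$ with $x_i/\|x_i\|\to b$; then $\pi(x_i)/\|x_i\|\to\pi(b)\ne 0$, so $\pi(x_i)/\|\pi(x_i)\|\to\pi(b)/\|\pi(b)\|\in D(\pi(A))$, which gives $\pi(b)\in LD(\pi(A))$. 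Combining both inclusions and using that $LD$ is a half-cone, $\pi(LD(A))=LD(\pi(A))$.

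For part (2), let $\{a_m\}\subset\R^{n-1}$ be a sequence tending to $0$ with $a_m/\|a_m\|\to a\in D(\pi(A))$. By part (1), $a\in\pi(LD(A))$, so there is $\tilde a\in D(A)$ with $\pi(\tilde a)$ a positive multiple of $a$; normalizing, we may pick $\tilde a\in LD(A)$ with $\pi(\tilde a)=a$. Lift $a_m$ to $\tilde a_m := \tilde a\cdot\|a_m\|\in\R^n$ (scaling along the fixed ray), so that $\pi(\tilde a_m)=a\|a_m\|$ and $\|\tilde a_m - \tilde a\,\|a_m\|\|=0$; more carefully, one wants $\tilde a_m\to 0$ with $\tilde a_m/\|\tilde a_m\|\to\tilde a\in D(A)$ and $\pi(\tilde a_m)$ comparable to $a_m$. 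Since $A$ satisfies condition $(SSP)$, there is $\{b_m\}\subset A$ with $\|\tilde a_m-b_m\|\ll\|\tilde a_m\|,\|b_m\|$. Now push down by $\pi$: since $\pi$ is $1$-Lipschitz, $\|\pi(\tilde a_m)-\pi(b_m)\|\le\|\tilde a_m-b_m\|\ll\|\tilde a_m\|$; and by the transversality estimate applied to $b_m\in A$, $\|\pi(b_m)\|\ge\delta\|b_m\|$, while $\|\pi(\tilde a_m)\|$ is comparable to $\|\tilde a_m\|$ (as $\pi(\tilde a)=a\ne 0$). Hence $\|\pi(\tilde a_m)-\pi(b_m)\|\ll\|\pi(\tilde a_m)\|,\|\pi(b_m)\|$. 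Finally, arrange the lift so that $\pi(\tilde a_m)=a_m$ exactly, or more realistically so that $\|a_m-\pi(\tilde a_m)\|\ll\|a_m\|$ (possible since both point in direction $a$ with comparable norms — here one uses condition $(SSP)$ for the ray $L(\{a\})$, which always holds); then $\|a_m-\pi(b_m)\|\le\|a_m-\pi(\tilde a_m)\|+\|\pi(\tilde a_m)-\pi(b_m)\|\ll\|a_m\|,\|\pi(b_m)\|$, and $\pi(b_m)\in\pi(A)$. This exhibits the required approximating sequence, so $\pi(A)$ satisfies $(SSP)$.
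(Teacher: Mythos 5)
Your proof is correct and follows essentially the same route as the paper's: both parts are done by sequence-chasing, with weak transversality supplying the norm comparability $\|\pi(x)\|\geq\delta\|x\|$ on $A$ (which the paper uses implicitly where you make it an explicit estimate), and part (2) by lifting the test sequence to a half-line of $LD(A)$, applying $(SSP)$ of $A$, and projecting back down. The only cosmetic difference is that the paper lifts $a_m$ to $(a_m,\|a_m\|c)$, which projects exactly to $a_m$ and so avoids your final extra triangle-inequality step.
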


\begin{proof}
(1) For the first  inclusion $\subseteq$,
it suffices to show
$\pi (D(A)) \subseteq LD(\pi (A))$.
Take $a \in D(A)$.
Then there is a sequence of points 
$\{ a_m \} \subset A \setminus \{ 0 \}$
tending to $0 \in \R^n$ such that 
$\lim_{m \to \infty} {a_m \over \| a_m \| } = a$.
By the weak transversality, $a \notin \ker \pi$.
Since $\pi (a_m) \ne 0$ for sufficiently large $m$,
we may assume that $\pi (a_m) \ne 0$ for any $m$.
Then we have
$$
{\pi (a) \over \| \pi (a) \| } =
\lim_{m \to \infty} {\pi (a_m) \over \| \pi (a_m) \|} 
\in D(\pi (A)).
$$
Hence $\pi (a) \in LD(\pi (A))$.

For the second inclusion 
it suffices to show
$D(\pi (A)) \subseteq \pi (LD(A))$.
Take $b \in D(\pi (A))$.
Then there is a sequence of points 
$\{ a_m \} \subset A \setminus \{ 0 \}$
tending to $0 \in \R^n$ such that 
$\lim_{m \to \infty} {\pi (a_m) \over \| \pi (a_m) \| } = b$.
Because of the same reason as above,
we may assume that $\pi (a_m) \ne 0$ for any $m$.
Then, by the weak transversality, there is a subsequence 
$\{ a_{m_j} \}$ of $\{ a_m \}$ such that
$$
\lim_{{m_j} \to \infty} {a_{m_j} \over \| a_{m_j} \|} 
= a \in D(A) \ \ \text{and} \ \ \pi (a) \ne 0.
$$
Then we have
$$
b = 
\lim_{{m_j} \to \infty} {\pi (a_{m_j}) \over \| \pi (a_{m_j}) \|}
= {\pi (a) \over \| \pi (a) \| }
= \pi ({a \over \| \pi (a) \| }) \in \pi (LD(A)).
$$

(2) Let $\{ b_m \}$ be an arbitrary sequence of points of 
$\R^{n-1}$ tending to $0 \in \R^{n-1}$ such that
$$
\lim_{m \to \infty} {b_m \over \| b_m \|} = b \in D(\pi (A)).
$$
Let $\ell = \{ tb \ | \ t \ge 0 \} \subset LD(\pi (A))$.
Then by (1), there is a half line $L \subset LD(A)$ such that 
$\pi (L) = \ell$.
Let us express $L$ as $\{ (t(b,c) \ | \ t \ge 0 \}$ 
for some $c \in \R$.
Let $\alpha_m = (b_m,\| b_m \| c)$ for each $m$.
Then we have
$$
\lim_{m \to \infty} {\alpha_m \over \| \alpha_m \|}
= \lim_{m \to \infty} {(b_m,\| b_m \| c) \over 
\| (b_m,\| b_m \| c) \|}
= \lim_{m \to \infty} {({b_m \over \| b_m \|},c) \over 
\| ({b_m \over \| b_m \|},c) \|}
= {(b,c) \over \| (b,c)\|} \in D(A).
$$
Since $A$ satisfies condition $(SSP)$,
there is a sequence of points 
$\{ \beta_m \} \subset A$, where $\beta_m = (a_m,d_m)
\in \R^{n-1} \times \R$, tending to $0 \in \R^n$ such that
$$
\| \beta_m - \alpha_m \| \ll \| \beta_m \| , \ \| \alpha_m \| .
$$
It follows that
$$
\| \pi (\beta_m ) - \pi (\alpha_m ) \|
= \| \pi (\beta_m - \alpha_m ) \| 
\le \| \beta_m - \alpha_m \| 
\ll \| \beta_m \| .
$$
Then, by the weak transversality,
$$
\| \pi (\beta_m ) - \pi (\alpha_m ) \|
\ll \| \pi (\beta_m ) \| \ \ (\text{and also} 
\ \| \pi (\alpha_m ) \| ).
$$
This means 
$$
\| a_m - b_m \| \ll \| a_m \| , \ \| b_m \| .
$$
Since $a_m = \pi (\beta_m ) \in \pi (A)$,
$\pi (A)$ satisfies condition $(SSP)$.
\end{proof}

\begin{rem}\label{remark31}
We cannot drop the assumption of the weak transversality
in the above theorem.

Let $\pi : \R^3 \to \R^2$ be the projection
defined by $\pi (x,y,z) = (x,y)$, and
let $A = \{ z^4 = x^2 + y^2 \} \cap \pi^{-1}(S)$,
where $S$ is a slow spiral on $(x,y)$-plane.
Then we can see that $A$ satisfies condition $(SSP)$,
but $\pi (A) = S$ does not satisfy condition $(SSP)$.
In addition, $\pi (LD(A)) = \{ 0 \}$ but $LD(\pi (A)) = \R^2$.
\end{rem}

Concerning the weak transversality assumption of 
Proposition \ref{projection}, we have the following lemma.

\begin{lem}\label{WTassumption}
Let $f : (\R^n,0) \to (\R^p,0)$ be a  map such that there is $c>0$ with 
$|f(x)|\le c|x|$
in a neighbourhood of the origin. Let $\pi : \R^n \times \R^p \to \R^n$ be the 
projection
on the first $n$-coordinates.
Then $\ker \pi$ and the graph of $f$ are weakly transverse at
$(0,0) \in \R^n \times \R^p$.
\end {lem}

Using  Proposition \ref{projection} and Lemma \ref{WTassumption},
we can show the following theorem on the $(SSP)$ structure:

\begin{thm}\label{image}
Let $h : (\R^n,0) \to (\R^n,0)$ be a Lipschitz 
homeomorphism such that  $c_1|x|\le |h(x)| \le c_2|x|,$ for some $ c_1, c_2>0$, 
in a 
neighbourhood of $0\in \R^n$, and let $A\subset \R^n$ be a set-germ at $0\in \R^n$ 
such 
that $0 \in \overline{A}$.
Suppose that $A$ satisfies condition $(SSP)$
and $h$ is an $(SSP)$ map. 
Then $h(A)$ also satisfies condition $(SSP)$.
\end{thm}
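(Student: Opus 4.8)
The plan is to realise $h(A)$ as a projection of the graph of $h$ restricted to $A$ and to transport condition $(SSP)$ along this picture. Write $\Gamma_h := \{(x,h(x)) \mid x \in \R^n\} \subset \R^n \times \R^n$, which satisfies condition $(SSP)$ at $(0,0)$ precisely because $h$ is an $(SSP)$ map, and set $G := \Gamma_h \cap (A \times \R^n) = \{(a,h(a)) \mid a \in A\}$, the graph of $h|_A$. The hypothesis $c_1\|x\| \le \|h(x)\| \le c_2\|x\|$ gives the norm equivalences $\|(x,h(x))\| \asymp \|x\| \asymp \|h(x)\|$ and, more importantly, forces every direction $\gamma = (\gamma_1,\gamma_2) \in D(\Gamma_h)$ to have \emph{both} coordinates non-zero, with $\|\gamma_2\| \ge c_1/\sqrt{1+c_2^2}$ (and symmetrically $\gamma_1 \ne 0$). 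The proof splits into two steps: (i) $G$ satisfies condition $(SSP)$; (ii) $h(A) = \pi(G)$ for a suitable coordinate projection $\pi$, and $(SSP)$ passes to this projection.

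For step (i), take any sequence $\{(u_m,v_m)\} \to (0,0)$ with $(u_m,v_m)/\|(u_m,v_m)\| \to \gamma \in D(G)$. Realising $\gamma$ by points of $G$ and using $\gamma_1 \ne 0$, one first records that $\alpha := \gamma_1/\|\gamma_1\| \in D(A)$. Next, since $\Gamma_h$ satisfies $(SSP)$, there are $p_m \in \R^n$ with $\|(u_m,v_m)-(p_m,h(p_m))\| \ll \|(u_m,v_m)\|, \|(p_m,h(p_m))\|$; by the norm equivalences this yields $\|u_m - p_m\| \ll \|u_m\|$, hence $\|p_m\| \asymp \|(u_m,v_m)\|$ and $p_m/\|p_m\| \to \alpha \in D(A)$. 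Now apply condition $(SSP)$ of $A$ to $\{p_m\}$ to get $a_m \in A$ with $\|p_m - a_m\| \ll \|p_m\|, \|a_m\|$; Lipschitz continuity of $h$ then gives $\|h(p_m)-h(a_m)\| \le L\|p_m-a_m\| \ll \|p_m\| \asymp \|h(p_m)\| \asymp \|h(a_m)\|$, so $\|(p_m,h(p_m)) - (a_m,h(a_m))\| \ll \|(p_m,h(p_m))\| \asymp \|(u_m,v_m)\|$. Combining the two approximations by the triangle inequality, together with $\|(a_m,h(a_m))\| \asymp \|(u_m,v_m)\|$, gives $\|(u_m,v_m)-(a_m,h(a_m))\| \ll \|(u_m,v_m)\|, \|(a_m,h(a_m))\|$ with $(a_m,h(a_m)) \in G$, which is exactly condition $(SSP)$ for $G$.

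For step (ii), let $\pi : \R^n \times \R^n \to \R^n$ be the projection onto the second factor, so that $\pi(G) = h(A)$ and $\ker\pi = \R^n \times \{0\}$. Since (as noted) every $\gamma \in D(G) \subset D(\Gamma_h)$ has $\gamma_2 \ne 0$, no direction of $G$ lies in $\ker\pi$, so $\ker\pi$ and $G$ are weakly transverse at the origin; this is also exactly what Lemma \ref{WTassumption} gives when applied to $h^{-1}$ (which satisfies $\|h^{-1}(y)\| \le c_1^{-1}\|y\|$), after exchanging the two factors and using $G \subset \Gamma_h$. Then Proposition \ref{projection}(2) — applied to $\pi$, i.e. its evident analogue for the projection $\R^n\times\R^n \to \R^n$, or obtained by iterating the one-coordinate statement over the source coordinates (at each stage the coordinate being dropped is a source axis, orthogonal to the target factor, hence not a direction of the current projected set) — yields that $\pi(G) = h(A)$ satisfies condition $(SSP)$, completing the proof.

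The step that deserves the most care, and where the only real subtlety lies, is (i): intersections of $(SSP)$ sets need not be $(SSP)$, so the fact that $G = \Gamma_h \cap (A\times\R^n)$ is $(SSP)$ must be extracted from the graph structure. It is precisely the graph property (one function value over each source point, with $\|h(x)\| \asymp \|x\|$) that lets one approximate first only in the source coordinate using $(SSP)$ of $\Gamma_h$, then correct into $A$ using $(SSP)$ of $A$, while Lipschitz continuity of $h$ keeps the target coordinate automatically under control. Everything else is the routine bookkeeping of the norm equivalences $\|(x,h(x))\| \asymp \|x\| \asymp \|h(x)\|$ guaranteed by $c_1\|x\| \le \|h(x)\| \le c_2\|x\|$.
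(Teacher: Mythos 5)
Your proposal is correct and follows essentially the same route as the paper: one first shows that the graph of $h|_A$ satisfies condition $(SSP)$ by a two-step approximation (first using $(SSP)$ of the full graph of $h$, then correcting the source coordinate into $A$ and lifting via Lipschitz continuity and the norm comparison $c_1\|x\|\le\|h(x)\|\le c_2\|x\|$), and then projects onto the target factor via Proposition \ref{projection} and the weak transversality supplied by Lemma \ref{WTassumption}. The only differences are cosmetic (you apply $(SSP)$ of $A$ to the graph-approximating sequence rather than to the first coordinates of the test sequence, and you are slightly more explicit than the paper about adapting Proposition \ref{projection} from a codimension-one projection to the projection $\R^{2n}\to\R^n$).
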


\begin{proof}
Let $\pi_2 : \R^n \times \R^n \to \R^n$ be the projection
on the second $n$-coordinates, and let
$$
G_A := \{ (a,h(a)) \in \R^n \times \R^n \ | \ a \in A\} .
$$
Suppose that $G_A$ satisfies condition $(SSP)$ as a set-germ
at $(0,0) \in \R^n \times \R^n$.
Since $h^{-1} : (\R^n,0) \to (\R^n,0)$ satisfies $|h^{-1}(x)|\leq \frac{1}{c_1}|x|$ 
in a 
neighbourhood of $0\in \R^n$,
it follows from Proposition  \ref{projection} and Lemma \ref{WTassumption}
that $h(A)= \pi_2(G_A)$ satisfies condition $(SSP)$.
Therefore it suffices to show that $G_A$ satisfies
condition $(SSP)$.

Let $\pi_1 : \R^n \times \R^n \to \R^n$ be the projection
on the first $n$-coordinates, and let $G$ be the graph of $h$.
Since $\ker \pi_1$ and $G$ are weakly transverse at
$(0,0) \in \R^n \times \R^n$, so are $\ker \pi_1$ and $G_A$.

Let us show that $G_A$ satisfies condition $(SSP)$.
Let $\{ \alpha_m \}$ be an arbitrary sequence of points of 
$\R^n \times \R^n$ tending to $(0,0) \in \R^n \times \R^n$ 
such that
$$
\lim_{m \to \infty} {\alpha_m \over \| \alpha_m \|} 
= \alpha \in D(G_A) \subset D(G),
$$
where $\alpha_m = (b_m , c_m ) \in \R^n \times \R^n$
for $m \in \N$.
Let $L = \{ t \alpha \ | \ t \ge 0 \} \subset LD(G_A)$.
Since $G$ satisfies condition $(SSP)$, there is a sequence 
of points of $\{ \beta_m \} \subset G$ such that
\begin{equation}\label{3-1}
\| \alpha_m - \beta_m \| \ll \| \alpha_m \| , \ \| \beta_m \| , 
\end{equation}
where $\beta_m = (d_m , h(d_m )) \in \R^n \times \R^n$
for $m \in \N$.
By the weak transversality of $\ker \pi_1$ and $G_A$,
$\pi_1 (L) = \ell \subset LD(A)$.
Note that $\ell = \{ tb \ | \ t \ge 0 \}$
for $b = \lim_{m \to \infty} {b_m \over \| b_m \|} \in D(A)$.
Therefore it follows from the weak transversality that
\begin{equation}\label{3-2}
\| b_m - d_m \| \ll \| b_m \| , \ \| d_m \| .
\end{equation}

On the other hand, since $A$ satisfies condition $(SSP)$,
there is a sequence of points $\{ a_m \} \subset A$
tending to $0 \in \R^n$ such that
\begin{equation}\label{3-3}
\| a_m - b_m \| \ll \| a_m \| , \ \| b_m \| .
\end{equation}
It follows from (\ref{3-2}) and (\ref{3-3}) that
\begin{equation}\label{3-4}
\| a_m - d_m \| \ll \| a_m \| , \ \| d_m \| .
\end{equation}
Because $h$ is Lipschitz, (\ref{3-4}) implies that,
\begin{equation}
\| h(a_m ) - h(d_m ) \| \ll \|a_m  \| , \ \| d_m  \| .
\end{equation}
Consequently our assumption on $h$ implies that
\begin{equation}\label{3-5}
\| h(a_m ) - h(d_m ) \| \ll \| h(a_m ) \| , \ \| h(d_m ) \| .
\end{equation}

Let $\gamma_m = (a_m , h(a_m )) \in G_A$ for $m \in \N$.
It follows from (\ref{3-4}) and (\ref{3-5}) that
\begin{equation}\label{3-6}
\| \gamma_m - \beta_m \| \ll \| \gamma_m \| , \ \| \beta_m \| .
\end{equation}
By (\ref{3-1}) and (\ref{3-6}) we have
$$
\| \alpha_m - \gamma_m \| \ll \| \alpha_m \| , \ \| \gamma_m \| .
$$
Therefore $G_A$ satisfies condition $(SSP)$.
This completes the proof of Theorem \ref{image}.
\end{proof}

\begin{defn}We call a homeomorphism : $(\R^n,0) \to (\R^n,0)$
an $(SSP)$ {\em bi-Lipschitz homeomorphism}
if it is bi-Lipschitz and an $(SSP)$ map.
\end{defn}
Obviously a  $C^1$ diffeomorphism  $h: (\R^n,0) \to (\R^n,0)$
is an $(SSP)$ bi-Lipschitz homeomorphism.

As a special case of the above theorem  we have the following preserving $(SSP)$ 
structure Theorem.

\begin{thm}\label{image1}
Let $h : (\R^n,0) \to (\R^n,0)$ be an $(SSP)$ bi-Lipschitz 
homeomorphism,  and let $A\subset \R^n$ be a set-germ at $0\in \R^n$ such 
that $0 \in \overline{A}$.
Then  $A$ satisfies condition $(SSP)$
if and only if  $h(A)$  satisfies condition $(SSP)$.
\end{thm}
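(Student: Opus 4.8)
The plan is to deduce both implications from Theorem \ref{image}, applied once to $h$ and once to $h^{-1}$. First I would record the elementary observation that a bi-Lipschitz homeomorphism $h:(\R^n,0)\to(\R^n,0)$ automatically satisfies the hypothesis $c_1|x|\le|h(x)|\le c_2|x|$ required in Theorem \ref{image}: taking $c_1=K_1$, $c_2=K_2$ to be the Lipschitz constants and using $h(0)=0$ gives $K_1|x|=K_1\|x-0\|\le\|h(x)-h(0)\|=|h(x)|\le K_2|x|$ near the origin; the same remark applied to $h^{-1}$ (whose Lipschitz constants are $1/K_2$ and $1/K_1$) shows $h^{-1}$ meets this requirement as well.

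For the ``only if'' direction, suppose $A$ satisfies condition $(SSP)$. Since $h$ is an $(SSP)$ bi-Lipschitz homeomorphism it is, by definition, an $(SSP)$ map, and it is Lipschitz with the norm bounds just noted; hence Theorem \ref{image} applies verbatim and gives that $h(A)$ satisfies condition $(SSP)$.

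For the ``if'' direction, the idea is to rerun the same argument with $h^{-1}$ in place of $h$ and $h(A)$ in place of $A$, so as to conclude that $h^{-1}(h(A))=A$ satisfies condition $(SSP)$. The only point needing verification is that $h^{-1}$ is again an $(SSP)$ map, i.e. that its graph satisfies condition $(SSP)$ at $(0,0)\in\R^n\times\R^n$. But the graph of $h^{-1}$, namely $\{(y,h^{-1}(y))\}=\{(h(x),x)\}$, is exactly the image of the graph of $h$ under the coordinate flip $\sigma:\R^n\times\R^n\to\R^n\times\R^n$, $\sigma(a,b)=(b,a)$, which is a linear isometry, in particular a $C^1$ diffeomorphism fixing $(0,0)$. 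Since condition $(SSP)$ is $C^1$ invariant (Remark \ref{remark1}(2)) and the graph of $h$ satisfies $(SSP)$ because $h$ is an $(SSP)$ map, the graph of $h^{-1}$ also satisfies $(SSP)$. Thus $h^{-1}$ is an $(SSP)$ bi-Lipschitz homeomorphism, and applying Theorem \ref{image} to $h^{-1}$ and the set $h(A)$ yields that $A$ satisfies condition $(SSP)$.

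The argument is essentially bookkeeping once Theorem \ref{image} is available; the only genuinely substantive point is the symmetry of the $(SSP)$-map property under $h\mapsto h^{-1}$, which is where the $C^1$-invariance of $(SSP)$, applied to the harmless coordinate swap $\sigma$, does the work. I do not expect any further obstacle.
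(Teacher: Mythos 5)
Your proposal is correct and is essentially the argument the paper intends: Theorem \ref{image1} is stated there as a special case of Theorem \ref{image}, obtained by applying that theorem to $h$ and to $h^{-1}$, and your verification that $h^{-1}$ is again an $(SSP)$ map (via the coordinate flip $\sigma$, a linear isometry, together with the $C^1$-invariance of $(SSP)$) supplies exactly the detail the paper leaves implicit. The observation that a bi-Lipschitz homeomorphism fixing the origin automatically satisfies the norm bounds $K_1|x|\le\|h(x)\|\le K_2|x|$ is likewise the right way to see that Theorem \ref{image} applies.
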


We have a corollary of the proof of  Theorem \ref{image}.

\begin{cor}\label{graph}
Let $h : (\R^n,0) \to (\R^n,0)$ be a Lipschitz 
homeomorphism as in  Theorem \ref{image}, and let $A\subset \R^n$ be a set-germ 
at $0\in \R^n$
such 
that $0 \in \overline{A}$.
Suppose that $h$ is an $(SSP)$ map and $A$ satisfies
condition $(SSP)$.
Then the restriction $h|_A$ is an $(SSP)$ map.

\end{cor}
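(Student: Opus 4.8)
The plan is simply to unwind the definition of an $(SSP)$ map and then point to the core of the proof of Theorem \ref{image}. By definition, $h|_A$ is an $(SSP)$ map precisely when its graph
$$
G_A := \{ (a, h(a)) \in \R^n \times \R^n \ | \ a \in A \}
$$
satisfies condition $(SSP)$ as a set-germ at $(0,0) \in \R^n \times \R^n$. But this is exactly the intermediate assertion proved in the course of establishing Theorem \ref{image}: there, after the reduction of $h(A) = \pi_2(G_A)$ to $G_A$ (via Proposition \ref{projection} and Lemma \ref{WTassumption}), the actual work was to verify that $G_A$ has condition $(SSP)$, and that argument — the chain of estimates (\ref{3-1})--(\ref{3-6}) — uses only the hypotheses we are given here, namely that $A$ satisfies $(SSP)$, that $h$ is an $(SSP)$ map (so its graph $G$ has $(SSP)$), and that $c_1 |x| \le |h(x)| \le c_2 |x|$ near $0$.

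Concretely, I would recall first that $\ker \pi_1$ and $G$, hence also $\ker \pi_1$ and $G_A \subseteq G$, are weakly transverse at $(0,0)$ by Lemma \ref{WTassumption}. Then, given an arbitrary sequence $\alpha_m = (b_m, c_m) \to (0,0)$ with $\alpha_m/\|\alpha_m\| \to \alpha \in D(G_A) \subseteq D(G)$, I would: approximate $\alpha_m$ by points $\beta_m = (d_m, h(d_m)) \in G$ using that $G$ satisfies $(SSP)$; push this down to $\| b_m - d_m \| \ll \| b_m \|, \| d_m \|$ using the weak transversality; approximate $b_m$ by $a_m \in A$ using that $A$ satisfies $(SSP)$ (after noting $\lim b_m/\|b_m\| \in D(A)$); combine the last two to get $\| a_m - d_m \| \ll \| a_m \|, \| d_m \|$; and finally apply the two-sided bound on $h$ to obtain $\| h(a_m) - h(d_m) \| \ll \| h(a_m) \|, \| h(d_m) \|$, so that $\gamma_m = (a_m, h(a_m)) \in G_A$ approximates $\alpha_m$ with $\| \alpha_m - \gamma_m \| \ll \| \alpha_m \|, \| \gamma_m \|$. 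This proves $G_A$ has $(SSP)$, i.e. $h|_A$ is an $(SSP)$ map.

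There is essentially no new obstacle: the only point to check is that the portion of the proof of Theorem \ref{image} dealing with $G_A$ is genuinely self-contained and does not secretly invoke the final projection step. Inspecting it, the reduction via $\pi_2$ serves only to pass from $(SSP)$ of $G_A$ back to $(SSP)$ of $h(A)$, and plays no role in the statement about $G_A$ itself. Hence the corollary follows at once from (the proof of) Theorem \ref{image}.
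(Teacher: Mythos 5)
Your proposal is correct and is exactly the paper's intended argument: the paper states this result as "a corollary of the proof of Theorem \ref{image}", and the intermediate step there — showing that $G_A$ satisfies condition $(SSP)$ via the estimates (\ref{3-1})--(\ref{3-6}) — is precisely the assertion that $h|_A$ is an $(SSP)$ map, proved from exactly the hypotheses at hand. Your check that this portion of the proof is self-contained and does not depend on the final projection step is the right observation.
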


We can give a characterisation of an $(SSP)$ map as follows:.

\begin{prop}\label{restrictedSSP}
Let $h : (\R^n,0) \to (\R^n,0)$ be a Lipschitz homeomorphism as in  Theorem 
\ref{image}.
Then $h$ is an $(SSP)$ map if and only if its restrictions to
any semiline $\ell$ are $(SSP)$ maps. 
\end{prop}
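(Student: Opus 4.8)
The plan is to get the ``only if'' direction for free from Corollary \ref{graph}, and to prove the ``if'' direction by a covering argument for the direction set of the graph of $h$.

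For the ``only if'' direction: every semiline $\ell$ is path connected, contains $0$, and has $D(\ell)$ a single point, so $\ell$ satisfies condition $(SSP)$ by Remark \ref{remark2}(6) (alternatively, $\ell$ is subanalytic). Since $h$ is a Lipschitz homeomorphism of the type in Theorem \ref{image} and is an $(SSP)$ map, Corollary \ref{graph} applied with $A=\ell$ gives at once that $h|_\ell$ is an $(SSP)$ map.

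For the ``if'' direction, write $G := \{(x,h(x))\} \subset \R^n \times \R^n$ for the graph of $h$, and for a semiline $\ell$ write $G_\ell := \{(x,h(x)) \ | \ x \in \ell\} \subset \R^n \times \R^n$ for the graph of $h|_\ell$, so $G_\ell \subseteq G$. The key step is the identity
$$
D(G) = \bigcup_{\ell} D(G_\ell),
$$
the union being over all semilines $\ell$ (only $\subseteq$ is actually needed). The inclusion $\supseteq$ is trivial. For $\subseteq$, let $\alpha \in D(G)$ be realized by a sequence $\{(x_k,h(x_k))\} \subset G \setminus \{(0,0)\}$ tending to $(0,0)$; then $x_k \neq 0$, and after passing to a subsequence $x_k/\|x_k\| \to a \in S^{n-1}$. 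Let $\ell$ be the semiline of direction $a$. Since $\|x_k - \|x_k\|a\| = o(\|x_k\|)$ and $h$ is Lipschitz with some constant $L$, we get $\|h(x_k) - h(\|x_k\|a)\| \le L\, o(\|x_k\|) = o(\|x_k\|)$, hence $\|(x_k,h(x_k)) - (\|x_k\|a, h(\|x_k\|a))\| = o(\|x_k\|)$. Because $\|(x,y)\| \ge \|x\|$, this displacement is also $o(\|(x_k,h(x_k))\|)$, so $(\|x_k\|a, h(\|x_k\|a))/\|(\|x_k\|a, h(\|x_k\|a))\|$ tends to the same limit $\alpha$; as $\|x_k\|a \in \ell$, this shows $\alpha \in D(G_\ell)$.

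With the identity established, I would finish as follows. Let $\{\alpha_m\}$ be any sequence of $\R^n \times \R^n$ tending to $(0,0)$ with $\lim_{m\to\infty}\alpha_m/\|\alpha_m\| = \alpha \in D(G)$. Pick a semiline $\ell$ with $\alpha \in D(G_\ell)$. By hypothesis $h|_\ell$ is an $(SSP)$ map, i.e.\ $G_\ell$ satisfies condition $(SSP)$; applying this to $\{\alpha_m\}$ yields $\{\beta_m\} \subset G_\ell \subseteq G$ with $\|\alpha_m - \beta_m\| \ll \|\alpha_m\|, \|\beta_m\|$. Hence $G$ satisfies condition $(SSP)$, i.e.\ $h$ is an $(SSP)$ map. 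The whole argument is essentially formal once the displayed identity is in place; the only point requiring care is that replacing $x_k$ by its radial projection $\|x_k\|a$ onto $\ell$ does not alter the limiting direction in $\R^n\times\R^n$, which is precisely where the Lipschitz hypothesis and the inequality $\|(x,y)\|\ge\|x\|$ enter (and where the standing hypothesis on $h$ from Theorem \ref{image} guarantees that $G$ and the $G_\ell$ are genuine set-germs at the origin). I do not anticipate any genuine obstacle beyond this bookkeeping.
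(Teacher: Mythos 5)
Your proposal is correct and follows essentially the same route as the paper: the necessity comes from Corollary \ref{graph} applied to a semiline (which satisfies $(SSP)$), and the sufficiency rests on showing that every direction of $D(G)$ already occurs as a direction of $D(G_\ell)$ for the semiline $\ell$ determined by the limiting direction of the first coordinates, using the Lipschitz property of $h$ to see that replacing $x_k$ by a nearby point of $\ell$ does not change the limiting direction of the graph point. The only cosmetic difference is that you produce the approximating points on $\ell$ by explicit radial projection $\|x_k\|a$, whereas the paper invokes the $(SSP)$ property of the semiline $\{(ta,tb)\}$ in the product space to obtain them; both arguments are sound.
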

\begin{proof} The corollary above gives the necessity. 
Let $G$ be the graph of $h$.
To prove the sufficiency, let us consider a sequence of points 
$\{ (a_m,b_m)\}$ of $\R^n \times \R^n$ tending to
$(0,0) \in \R^n \times \R^n$ such that 
$$
\lim_{m \to \infty} {(a_m,b_m) \over \| (a_m,b_m) \|} 
= (a,b) \in D(G).
$$
We put $l:=\{(ta,tb) \ | \ t \geq 0 \}$ and
$l_1:=\{ta \ | \  t\geq 0 \}$.
Then there is a sequence of points $\{ (c_i,h(c_i))\}$ 
of $G$ such that
$\lim_{i \to \infty} {(c_i,h(c_i)) \over \| (c_i,h(c_i)) \|} 
= (a,b)$. 
Since $l$ satisfies condition $(SSP)$, there are positive numbers 
$s_i\in \R$ so that
$$
\|s_i(a,b)-(c_i,h(c_i))\| \ll \| c_i\|, \ s_i.
$$ 
This shows that the direction $l$ is also attained by the sequence 
$\{ (s_ia, h(s_ia))\}$, namely it appears as a direction of the graph 
of the restriction of $h$ to $l_1$, 
and we can apply the hypothesis to end the proof.
\end{proof}
\begin{rem}
Unfortunately a homeomorphism which is merely an (SSP) homeomorphism, does not 
always 
preserve the condition (SSP). We can construct an (SSP) homeomorphism 
$h:\R \to \R$, 
which also satisfies semiline-$(SSP)$, 
such that there is a set $A$ satisfying condition (SSP) but $h(A)$ does not.
\end{rem}
Concerning Theorem \ref{image1}, it may be natural to ask
the following question:

\begin{ques}\label{question1}
Let $h : (\R^n,0) \to (\R^n,0)$ be a bi-Lipschitz homeomorphism.
Suppose that if $A$ satisfies condition $(SSP)$,
so does $h(A)$ for any set-germ $A$ at $0 \in \R^n$ 
such that $0 \in \overline{A}$.
Then is $h$ an $(SSP)$ map?
\end{ques}
We have a negative example to the above question.

\begin{example}\label{example31}
Let $h : (\R,0) \to (\R,0)$ be a zig-zag function
whose graph is drawn below (Figure 3). (Note that the zigzag in Figure 2 is not the graph of a function!)

\begin{figure}[ht]
  \centering
  \begin{tikzpicture}[scale=1]
   
   \draw[->] (0,-4) -- (0,7);
   \draw[->] (-4,0) -- (7,0);
   
  
\draw[dotted] (0,0) -- (7,7);
\draw[dotted] (0,0) -- (6,3);
  \draw (7,7) -- (6,3) ; 
\draw (6,3) -- (2,2) ;
    \draw (2,2) -- (1,0.5);
   \draw (1,0.5) -- (0.25, 0.25) ;
\draw (0.25,0.25) -- (0.1,0.05);
\draw (0,0) -- (-3,-3);
   
  \end{tikzpicture}
  \caption{}
  \label{fig.3}
\end{figure}

\noindent Then $h$ is a bi-Lipschitz homeomorphism.
As stated in Remark \ref{remark1} (2), $h$ satisfies the 
$(SSP)$ assumption in Question \ref{question1}.
But the graph of $h$ does not satisfy condition $(SSP)$.
Therefore $h$ is not an $(SSP)$ map, moreover $h$ also satisfies condition 
semiline-$(SSP)$.
\end{example}

\begin{rem}\label{remark32}
We can consider a similar question to Question \ref{question1}
in the semialgebraic category or in the subanalytic one.
Namely, we consider the question, replacing condition $(SSP)$
with semialgebraic or subanalytic.
Indeed, let $h : (\R^n,0) \to (\R^n,0)$ be a bi-Lipschitz homeomorphism.
Suppose that if $A$ is semialgebraic (or subanalytic), then
so does $h(A)$ (for any set-germ $A$ at $0 \in \R^n$ 
such that $0 \in \overline{A}$).
Does this property imply that $h$ is a semialgebraic map (subanalytic respectively)?
The above example provides  a negative answer.
\end{rem}

This kind of phenomenon is not particular to the one-dimensional case. For instance, 
let $T : = \{ (x,y) \in \R^2 ; x > 0 \, \, \text{and}\, \,  exp(-1/x^2) < y < 2 exp(-1/x^2) \}$.
Then we can define a homeomorphism germ $h : (\R^2,0) \to (\R^2,0)$
by  identity outside $T$, and,  on $T$, we can take any extension so that
$h$ is a non-semialgebraic homeomorphism. 
However this kind of $h$ takes semialgebraic set-germs to semialgebraic set-germs.
Indeed,  for any 1-dimensional semialgebraic set $A$ such that $0 \in \overline A,\,\,
A \cap T$ is empty as a set-germ at $0 \in \R^2$, and obviously
its image $h(A) = A$ is semialgebraic.
If $B$ is an arbitrary 2-dimensional semialgebraic set
such that $0 \in \overline B$,
then the boundary of $B$ does not intersect $T$ as set-germs
at $0 \in \R^2$.
Therefore we can see that $h(B)$ is also a semialgebraic set-germ.

The subanalytic case is similar. 

Concerning the above phenomenon we mention the following results.

\begin{prop}\label{productmap} 
\noindent
\begin{enumerate}
\item Both $\, h_i : (\R^{n_i},0) \to (\R^{n_i},0)$, $i=1,2$, 
are $(SSP)$ bi-Lipschitz  homeomorphisms if and only if 
$h_1\times h_2 : (\R^{n_1} \times \R^{n_2}, 0 \times 0) 
\to (\R^{n_1} \times \R^{n_2}, 0 \times 0)$ is 
an $(SSP)$ bi-Lipschitz homeomorphism.

\item Let $h: (\R^n,0) \to (\R^n,0)$ be a bi-Lipschitz homeomorphism.
Then $I_n\times h : (\R^n \times \R^n, 0 \times 0) \to
(\R^n \times \R^n, 0 \times 0)$  (or  $I_n\times h^{-1}$)
satisfies  condition semiline-$(SSP)$ if and only if 
$I_n\times h : (\R^n \times \R^n, 0 \times 0) \to(\R^n \times \R^n, 0 \times 0)$ 
is an $(SSP)$ map.

\item Let $h : (\R^n,0) \to (\R^n,0)$ be a bi-Lipschitz homeomorphism.
Then $h$ is an $(SSP)$ map if and only if $I_n \times h : 
(\R^n \times \R^n, 0 \times 0) \to (\R^n \times \R^n, 0 \times 0)$ 
( or $I_n\times h^{-1} $)  satisfies condition semiline-$(SSP)$.

\end{enumerate}

(Here $I_n : (\R^n,0) \to (\R^n,0)$ represents the identity map.)
\end{prop}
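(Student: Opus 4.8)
The plan is to prove part (1) directly from the product theorem for condition $(SSP)$ (Proposition \ref{productssp}), and then to deduce parts (2) and (3) from (1) together with Theorem \ref{image1}, Corollary \ref{linecor2} and Proposition \ref{restrictedSSP}.

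For (1), I would first note that $h_1 \times h_2$ is bi-Lipschitz if and only if both $h_1$ and $h_2$ are: the ``if'' part is the elementary product-metric estimate, and the ``only if'' part follows by restricting $h_1 \times h_2$ to the slices $\R^{n_i} \times \{0\}$, on which it coincides (up to the isometric projection) with $h_i$. For the $(SSP)$-map part, the key observation is that the graph $G_{h_1 \times h_2} \subset (\R^{n_1} \times \R^{n_2}) \times (\R^{n_1} \times \R^{n_2})$ is carried, by the orthogonal coordinate permutation reordering the four blocks as $(\R^{n_1}, \R^{n_1}, \R^{n_2}, \R^{n_2})$, onto the product $G_{h_1} \times G_{h_2}$. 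Since that permutation is a linear, hence $C^1$, diffeomorphism and condition $(SSP)$ is $C^1$-invariant (Remark \ref{remark1}(2)), $G_{h_1 \times h_2}$ satisfies $(SSP)$ if and only if $G_{h_1} \times G_{h_2}$ does, which by Proposition \ref{productssp} holds if and only if both $G_{h_1}$ and $G_{h_2}$ satisfy $(SSP)$. Combining the two observations gives (1). As a special case, taking $h_1 = I_n$ (whose graph is a linear subspace, hence an $(SSP)$ set by Remark \ref{remark2}(4)), $I_n \times h$ is an $(SSP)$ map if and only if $h$ is.

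For (3), to prove ``$h$ an $(SSP)$ map $\Rightarrow$ $I_n \times h$ satisfies semiline-$(SSP)$'' I would chain known results: by (1), $I_n \times h$ is an $(SSP)$ bi-Lipschitz homeomorphism; by Theorem \ref{image1} it then preserves condition $(SSP)$ in both directions, which is exactly property (1) of Corollary \ref{linecor2}; hence by that corollary $I_n \times h$ satisfies semiline-$(SSP)$. For the converse, assume $I_n \times h$ satisfies semiline-$(SSP)$. Given a semiline $\ell = \{ta \mid t \ge 0\} \subset \R^n$ with $a \neq 0$, apply the hypothesis to the semiline $L = \{t(a,a) \mid t \ge 0\} \subset \R^n \times \R^n$: then $(I_n \times h)(L) = \{(ta, h(ta)) \mid t \ge 0\}$ is precisely the graph of the restriction $h|_\ell$, it has a unique direction by hypothesis, and it is path-connected and contains the origin, so by Remark \ref{remark2}(6) it satisfies condition $(SSP)$; thus $h|_\ell$ is an $(SSP)$ map for every semiline $\ell$. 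Since a bi-Lipschitz homeomorphism meets the hypotheses of Theorem \ref{image}, Proposition \ref{restrictedSSP} then gives that $h$ is an $(SSP)$ map.

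Finally, (2) is formal from (1) and (3): if $I_n \times h$ is an $(SSP)$ map then so is $h$ by (1), hence $I_n \times h$ satisfies semiline-$(SSP)$ by (3); conversely if $I_n \times h$ satisfies semiline-$(SSP)$ then $h$ is an $(SSP)$ map by (3), hence $I_n \times h$ is one by (1). The statements involving $h^{-1}$ reduce to these using that $h$ is an $(SSP)$ map if and only if $h^{-1}$ is (swapping the two graph factors is a $C^1$ coordinate permutation), that $(I_n \times h)^{-1} = I_n \times h^{-1}$, and that in the bi-Lipschitz case semiline-$(SSP)$ is self-dual under taking inverses (the first corollary to Proposition \ref{semiline}). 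The step I expect to need the most care is the converse in (3): one must recognise that $(I_n \times h)$ applied to the ``diagonal'' semiline $L$ reproduces the graph of $h|_\ell$, so that Proposition \ref{restrictedSSP} can be applied. The tempting shortcut of arguing that the $(SSP)$ graph of a curve must have a unique direction is false in general (the zig-zag curve of Remark \ref{example1}); this is exactly why the equivalences in (2) and (3), in contrast to Example \ref{example31}, hold --- they rely essentially on the presence of the identity factor.
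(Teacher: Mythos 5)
Your argument is correct, and its skeleton matches the paper's: part (1) via the identification (up to a block permutation of coordinates) of the graph of $h_1\times h_2$ with $G_{h_1}\times G_{h_2}$ and Proposition \ref{productssp}, and the implication ``$(SSP)$ map $\Rightarrow$ semiline-$(SSP)$'' via Theorem \ref{image1} combined with Corollary \ref{linecor2}. The one place you genuinely diverge is the converse direction. The paper argues globally: semiline-$(SSP)$ for $I_n\times h$ gives, by Corollary \ref{linecor2}, that $I_n\times h$ carries $(SSP)$ sets to $(SSP)$ sets, and applying this once to the full diagonal $\Delta=\{(x,x)\}$ (a linear subspace, hence $(SSP)$) shows that its image, the graph of $h$, satisfies $(SSP)$, i.e.\ $h$ is an $(SSP)$ map. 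You instead work semiline by semiline: you apply the raw definition of semiline-$(SSP)$ to each diagonal semiline $\{t(a,a)\}$, use Remark \ref{remark2}(6) to conclude that the graph of each restriction $h|_\ell$ satisfies $(SSP)$, and then invoke Proposition \ref{restrictedSSP} to assemble these into $(SSP)$ of the whole graph. Your route is more self-contained in that it does not need the (nontrivial) Corollary \ref{linecor2} for this direction, only the definition plus the one-directional Remark \ref{remark2}(6); the price is an extra appeal to Proposition \ref{restrictedSSP}. The paper's route is shorter but leans on the full strength of Corollary \ref{linecor2}. Also note the paper proves (2) directly and deduces (3), while you prove (3) and deduce (2); given part (1) these are interchangeable. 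Your closing remark about why the zig-zag counterexample (Example \ref{example31}) does not contradict (2)--(3) — the presence of the identity factor forcing diagonal semilines into play — is a helpful observation not made explicit in the paper.
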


\begin{proof}
Note that the graph of $h_1\times h_2$ is the Cartesian product 
of the graphs of $h_1$ and $h_2$.
Then (1) follows from Proposition \ref{productssp}.

In (2) we already know the sufficiency by Theorem \ref{image}. 
For necessity, in our set up, it follows that $I_n\times h$ takes 
(SSP) sets to (SSP) sets, see Corollary \ref{linecor2}. 
In particular the diagonal in $\R^n \times \R^n$ is taken to 
the graph of $h$, so $h$ is an (SSP) map and by (1) 
so is $I_n\times h$.

Now (3) clearly  follows from (1) and (2). 
\end{proof}

\begin{rem}\label{remark33}
Note that if $h : (\R^n,0) \to (\R^n,0)$ is an $(SSP)$ bi-Lipschitz homeomorphism, 
then 
for any  semiline $\ell$ the cone 
$LD(G_{\ell})$ is also a semiline. This fact also  explains  the example 
\ref{example31}. (Here $G_{\ell}$ is the graph of the restriction of $h$ to $\ell$.)
\end{rem}

\begin{rem}\label{remark34}
\noindent
\begin{enumerate}
\item There are bi-Lipschitz homeomorphisms
$h : (\R^n,0) \to (\R^n,0)$, $n \ge 2$,
which are not  $(SSP)$ bi-Lipschitz homeomorphisms.

For instance, let $h : (\R^2,0) \to (\R^2,0)$ 
be a zigzag bi-Lipschitz homeomorphism in Example 3.4
of \cite{koikepaunescu}
or a slow spiral bi-Lipschitz homeomorphism,
and let $A$ be the positive $x$-axis.
Clearly $A$ satisfies condition $(SSP)$
and $h(A)$ does not satisfy condition $(SSP)$. 
Then, by Theorem \ref{image1}, $h$ is not an $(SSP)$ map.

\item  The homeomorphism $\overline h$ associated to a bi-Lipschitz  
homeomorphism which satisfies condition semiline-$(SSP)$ is an (SSP) map.
\end{enumerate}
\end{rem}

In order to give another large class of examples of (SSP) homeomorphisms.
let us consider  a category of homeomorphisms $h : (\R^n,0) \to (\R^n,0)$
 called  {\em weak diffeomorphisms}, namely those  $h$ and $h^{-1}$ which admit
derivative (= linear approximation) at $0 \in \R^n$.

We will point out some directional and $(SSP)$ properties 
for  the class of weak diffeomorphisms, namely we will show that the  weak 
diffeomorphisms 
are also (SSP) homeomorphisms.
\begin{rem}
Note that a weak homeomorphism is not necessarily Lipschitz.  
For  instance we may have $h(x,y,z)=(x,y,z+(x^5+y^5)^{1/3}).$ 
\end{rem}
Let $h : (\R^n,0) \to (\R^n,0)$ denote a weak diffeomorphism.
Then $h$ can be expressed in a neighbourhood of $0 \in \R^n$
as follows:
$$h(x) = M_h (x) + O_h (x),$$ 
 where $M_h$ is a regular linear map from $\R^n$ to $\R^n$,
and $\lim_{x \to 0} {\| O_h (x)\| \over \| x\|} = 0$.
Note that $M_{h^{-1}} \circ M_h = Id$. 

\begin{lem}\label{dpforwd} 
Let $A \subset \R^n$ be a set-germ at $0 \in \R^n$
such that $0 \in \overline{A}$,
and let $G$ and $G_A$ be the graphs of the weak diffeomorphism  $h$ and $h|_A$ respectively.
Then we have

(1) $LD(M_h(A)) = M_h(LD(A)) = LD(h(A))$.

(2) $LD(G_A) =LD(\text{graph} (M_h|_A))$. 
In particular  $LD(G)=\text{graph} (M_h)$ is an $n$-dimensional linear subspace
of $\R^n \times \R^n$.
\end{lem}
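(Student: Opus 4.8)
The plan is to exploit the decomposition $h(x) = M_h(x) + O_h(x)$ with $\|O_h(x)\| = o(\|x\|)$, and to reduce everything to the elementary fact that adding a term of order $o(\|x\|)$ to a sequence of points approaching $0$ does not change the direction set. For part (1), I would argue the two equalities separately. To see $LD(M_h(A)) = M_h(LD(A))$: since $M_h$ is a regular linear map, $M_h$ carries half-lines to half-lines and is bi-Lipschitz on $\R^n$; thus if $x_i \in A \setminus \{0\}$ with $x_i/\|x_i\| \to a$, then $M_h(x_i)/\|M_h(x_i)\| \to M_h(a)/\|M_h(a)\|$, and conversely, giving a bijective correspondence of direction sets that commutes with taking the half-cone. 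To see $M_h(LD(A)) = LD(h(A))$: given $x_i \in A\setminus\{0\}$, $x_i \to 0$, $x_i/\|x_i\| \to a \in D(A)$, write $h(x_i) = M_h(x_i) + O_h(x_i)$; since $\|O_h(x_i)\| = o(\|x_i\|) = o(\|M_h(x_i)\|)$ (using regularity of $M_h$ to compare norms), we get $h(x_i)/\|h(x_i)\| \to M_h(a)/\|M_h(a)\|$, so $LD(h(A)) \subseteq M_h(LD(A))$; the reverse inclusion follows by running the same argument with $h^{-1}$ (also a weak diffeomorphism, with linear part $M_h^{-1}$) in place of $h$ and $h(A)$ in place of $A$, i.e. $M_h^{-1}(LD(h(A))) \supseteq LD(h^{-1}(h(A))) = LD(A)$.

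For part (2), the key observation is that the graph $G_A = \{(a, h(a)) : a \in A\}$ differs from $\mathrm{graph}(M_h|_A) = \{(a, M_h(a)) : a \in A\}$ by the ``vertical'' perturbation $(0, O_h(a))$, whose norm is $o(\|a\|) = o(\|(a, M_h(a))\|)$ because $M_h$ is linear (so $\|M_h(a)\| \le \|M_h\|\,\|a\|$, whence $\|(a,M_h(a))\| \asymp \|a\|$). Therefore, for any sequence $a_i \in A\setminus\{0\}$ with $a_i \to 0$ and $a_i/\|a_i\| \to a$, the two sequences $(a_i, h(a_i))$ and $(a_i, M_h(a_i))$ have the same limiting direction; this yields $D(G_A) = D(\mathrm{graph}(M_h|_A))$, and hence the equality of the half-cones $LD(G_A) = LD(\mathrm{graph}(M_h|_A))$. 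Taking $A = \R^n$, the map $M_h|_{\R^n} = M_h$ is linear, so $\mathrm{graph}(M_h)$ is already an $n$-dimensional linear subspace of $\R^n \times \R^n$ and therefore equals its own real tangent cone (by Remark \ref{remark2}(4), a $C^1$ manifold through $0$ coincides with its real tangent cone), giving $LD(G) = \mathrm{graph}(M_h)$.

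I expect the only genuinely delicate point to be the bookkeeping of which norm dominates in the estimates $\|O_h(x_i)\| = o(\|x_i\|)$ versus $o(\|M_h(x_i)\|)$ versus $o(\|h(x_i)\|)$; all of these are equivalent up to constants precisely because $M_h$ and $M_{h^{-1}}$ are regular linear maps, so one should record once and for all that $\|M_h\|^{-1}\|x\| \le \|M_h(x)\| \le \|M_h\|\,\|x\|$ and, for $x$ small, $\tfrac12\|M_h(x)\| \le \|h(x)\| \le 2\|M_h(x)\|$ (from $\|O_h(x)\| \le \tfrac12 \|M_h\|^{-1}\|x\|$ eventually), after which every direction-set identity is immediate. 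There is no hard analytic obstacle here; the content is entirely in the simple principle that $o(\|x\|)$-perturbations are invisible to the direction set.
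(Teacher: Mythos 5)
Your argument is correct and follows essentially the same route as the paper's proof, which likewise computes $\lim_{m\to\infty} h(a_m)/\|h(a_m)\| = M_h(\alpha)/\|M_h(\alpha)\|$ from the decomposition $h = M_h + O_h$ with $\|O_h(x)\| = o(\|x\|)$, obtains the reverse inclusion by interchanging $h$ and $h^{-1}$, and treats the first equality of (1) and all of (2) as analogous. The only point worth tidying is that the inclusion you label $LD(h(A)) \subseteq M_h(LD(A))$ requires first extracting a subsequence with $x_i/\|x_i\|$ convergent to some $a \in D(A)$ before the limit computation applies; once that is said explicitly, both inclusions are in place.
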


\begin{proof}
(1) Since we can easily see the first equality,
we show only the second one. Moreover, interchanging $h$ and $h^{-1}$,
it suffices to show
$M_h(LD(A)) \subset LD(h(A))$. 

Let $\alpha$ be an arbitrary element of $D(A)$.
Then there is a sequence of points $\{ a_m \} \subset A$
tending to $0 \in \R^n$ such that 
$\lim_{m \to \infty} {a_m \over \| a_m \|} = \alpha$. 
Therefore

\begin{eqnarray*}
{ M_h(\alpha ) \over \| M_h(\alpha )\|} 
&=& \lim_{m \to \infty} {M_h({a_m \over \| a_m\|}) \over
\| M_h({a_m \over \| a_m\|}) \|}  \\
&=& \lim_{m \to \infty} {{1 \over \| a_m \|}(M_h(a_m) + O_h(a_m)) 
\over {1 \over \| a_m \|} \| M_h(a_m) + O_h(a_m) \|} \\
&=& \lim_{m \to \infty} {M_h(a_m) + O_h(a_m) \over 
\| M_h(a_m) + O_h(a_m) \|} \\
&=& \lim_{m \to \infty} {h(a_m) \over \| h(a_m) \|}
\in D(h(A)).
\end{eqnarray*}

\noindent It follows that $M_h(LD(A)) \subset LD(h(A))$. 

(2) The proof is similar to the above and it is omitted.  
\end{proof}
\begin{rem}

It is also worth mentioning that there are $(SSP)$ homeomorphisms which do not 
satisfy condition semiline-$(SSP)$. For example one may consider the function  
$f$ which has a zig-zag graph  
and the associated homeomorphism $h:(\R^2,0) \to (\R^2,0), \, h(x,y)=(x,y+f(x))$.
This shows that outside the bi-Lipschitz category there is no direct implication 
between the $(SSP)$ homeomorphisms and those satisfying condition semiline-$(SSP)$ 
(see also \ref {example31}).
\end{rem}
 
The following theorem shows that the weak diffeomorphisms are also suitable for the 
$(SSP)$ category.
\begin{thm}\label{weekdiffeo}
A weak diffeomorphism is an $(SSP)$  homeomorphism and satisfies condition 
semiline-$(SSP)$ as well.
\end{thm}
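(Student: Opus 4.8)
The plan is to bypass Theorems \ref{image} and \ref{image1} --- which are not available here, since a weak diffeomorphism need not be Lipschitz --- and to argue directly from the decomposition $h(x) = M_h(x) + O_h(x)$ together with Lemma \ref{dpforwd}, which already packages all the tangent-cone computations we need.

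First I would dispose of condition semiline-$(SSP)$. Given a semiline $\ell = \{ta \mid t \ge 0\}$ with $a \in S^{n-1}$, one has $LD(\ell) = \ell$, so Lemma \ref{dpforwd}(1) gives $LD(h(\ell)) = M_h(LD(\ell)) = M_h(\ell) = \{tM_h(a)\mid t\ge 0\}$. Since $M_h$ is a regular linear map, $M_h(a) \ne 0$, so $M_h(\ell)$ is again a semiline and $D(h(\ell)) = \{M_h(a)/\|M_h(a)\|\}$ is a single point. This is exactly condition semiline-$(SSP)$.

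Next I would prove that the graph $G$ of $h$ satisfies condition $(SSP)$ at $(0,0) \in \R^n \times \R^n$. By Lemma \ref{dpforwd}(2), $LD(G) = \text{graph}(M_h)$, so $D(G) = \text{graph}(M_h) \cap S^{2n-1}$, and every element of $D(G)$ has the form $(a, M_h(a))/\|(a,M_h(a))\|$ with $a \ne 0$. Let $\alpha_m = (b_m, c_m) \to (0,0)$ be an arbitrary sequence with $\alpha_m/\|\alpha_m\| \to (a, M_h(a))/\|(a,M_h(a))\|$, $a\ne 0$; the natural point of $G$ to test against is $\beta_m := (b_m, h(b_m)) = (b_m,\, M_h(b_m) + O_h(b_m)) \in G$, so that $\alpha_m - \beta_m = (0,\, c_m - M_h(b_m) - O_h(b_m))$. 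Dividing by $\|\alpha_m\|$: the quotient $c_m/\|\alpha_m\|$ converges to $M_h(a)/\|(a,M_h(a))\|$, and by linearity and continuity of $M_h$ so does $M_h(b_m)/\|\alpha_m\| = M_h(b_m/\|\alpha_m\|)$, whence $\|c_m - M_h(b_m)\| \ll \|\alpha_m\|$; and since $\|b_m\| \le \|\alpha_m\|$ while $b_m \ne 0$ for large $m$ (because $a \ne 0$), we get $\|O_h(b_m)\| = \bigl(\|O_h(b_m)\|/\|b_m\|\bigr)\,\|b_m\| \ll \|\alpha_m\|$. Hence $\|\alpha_m - \beta_m\| \ll \|\alpha_m\|$, and consequently $\|\alpha_m - \beta_m\| \ll \|\beta_m\|$ as well. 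Thus $G$ satisfies condition $(SSP)$, i.e. $h$ is an $(SSP)$ homeomorphism.

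I do not expect a genuine obstacle. The only delicate point is the remainder estimate $\|O_h(b_m)\| \ll \|\alpha_m\|$, which could in principle fail if $b_m$ were negligible compared with $\alpha_m$; but the hypothesis $a \ne 0$ forces $\|b_m\|$ to be of the same order as $\|\alpha_m\|$, so this is harmless, and everything else is just unwinding the content of Lemma \ref{dpforwd}.
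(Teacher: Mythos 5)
Your proof is correct and follows essentially the same route as the paper, which simply declares the result ``an easy consequence of Lemma \ref{dpforwd}'': both arguments hinge on the decomposition $h = M_h + O_h$ and on $LD(G)=\mathrm{graph}(M_h)$, and your test sequence $\beta_m=(b_m,h(b_m))$ together with the observation that $a\neq 0$ forces $\|b_m\|\sim\|\alpha_m\|$ is exactly the detail the paper leaves implicit. The semiline-$(SSP)$ part via $LD(h(\ell))=M_h(\ell)$ is likewise the intended use of Lemma \ref{dpforwd}(1).
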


\begin{proof}
Let $h$ be a weak diffeomorphism.
In fact it is an easy consequence of Lemma \ref{dpforwd}  that for any 
$A\subset \R^n$ 
satisfying condition $(SSP)$, $G_A$ satisfies condition $(SSP)$,
where $G_A$ is the graph of the restriction of $h$ to $A$.
Therefore $G$ satisfies condition $(SSP)$ at 
$0 \in \R^{2n}$.
\end{proof}

 As a corollary of the proof above and Lemma \ref{WTassumption} we have the 
following 
corollary.
 
 \begin{cor}
 Let $h:(\R^n,0)\to(\R^n,0)$ be a  weak diffeomorphism and let $A\subset \R^n$ be 
a set-germ at $0\in \R^n$ such that $0\in \overline A$. Then $A$ satisfies 
condition (SSP) if and only if  $h(A)$ satisfies 
condition (SSP).
 \end{cor}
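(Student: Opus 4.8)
The plan is to reduce the statement to the combination of Theorem \ref{weekdiffeo} (more precisely, the content of its proof) and Proposition \ref{projection}, following verbatim the scheme by which Theorem \ref{image} was deduced from Proposition \ref{projection} and Lemma \ref{WTassumption}; the only genuinely new point is that a weak diffeomorphism need not be Lipschitz, so Theorem \ref{image} cannot be cited directly.

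First I would record the elementary observation that a weak diffeomorphism $h$ does satisfy a linear growth bound near the origin. Writing $h(x) = M_h(x) + O_h(x)$ with $\|O_h(x)\|/\|x\| \to 0$ as $x \to 0$, one gets $\|h(x)\| \le (\|M_h\| + 1)\|x\|$ for all $x$ in a small neighbourhood of $0 \in \R^n$; likewise $\|h^{-1}(x)\| \le c'\|x\|$ near $0$, since $h^{-1}$ is again a weak diffeomorphism. This is exactly the hypothesis under which Lemma \ref{WTassumption} applies.

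Next, suppose $A$ satisfies condition $(SSP)$. The proof of Theorem \ref{weekdiffeo} shows, via Lemma \ref{dpforwd}, that the graph $G_A = \{(a,h(a)) : a \in A\}$ satisfies condition $(SSP)$ at $(0,0) \in \R^n \times \R^n$. Let $\pi_1, \pi_2 : \R^n \times \R^n \to \R^n$ be the projections onto the first and second $n$ coordinates. Applying Lemma \ref{WTassumption} to $h$, the subspace $\ker \pi_1$ is weakly transverse to the graph of $h$, hence to $G_A$; applying it to $h^{-1}$, whose graph is obtained from that of $h$ by interchanging the two factors, $\ker \pi_2$ is weakly transverse to $G_A$. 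Since $G_A$ satisfies $(SSP)$ and is weakly transverse to $\ker \pi_2$, Proposition \ref{projection}(2) yields that $\pi_2(G_A) = h(A)$ satisfies condition $(SSP)$. The converse is then obtained by running the identical argument with $h$ replaced by the weak diffeomorphism $h^{-1}$ and $A$ replaced by $h(A)$.

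The only step where any care is needed — and the one I would flag as the potential obstacle — is verifying that the non-Lipschitz weak diffeomorphisms (for instance $h(x,y,z) = (x,y,z+(x^5+y^5)^{1/3})$) still fall within the scope of Lemma \ref{WTassumption}; this is settled by the linear growth bound noted in the first step, so in the end there is no real difficulty and the corollary follows formally from the results already established.
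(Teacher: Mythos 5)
Your proposal is correct and follows exactly the route the paper intends: the paper states the corollary "as a corollary of the proof above and Lemma \ref{WTassumption}", i.e.\ one takes the fact (from the proof of Theorem \ref{weekdiffeo}, via Lemma \ref{dpforwd}) that $G_A$ satisfies $(SSP)$, checks the linear growth bound so that Lemma \ref{WTassumption} gives weak transversality of $\ker\pi_2$ with $G_A$, and applies Proposition \ref{projection}(2) to get $(SSP)$ for $h(A)=\pi_2(G_A)$, with the converse by symmetry since $h^{-1}$ is again a weak diffeomorphism. Your explicit verification that non-Lipschitz weak diffeomorphisms still satisfy the hypothesis $|h(x)|\le c|x|$ of Lemma \ref{WTassumption} is precisely the detail the paper leaves implicit, and it is handled correctly.
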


\section{Appendix - Geometric applications to spirals}\label{spirals}

We consider polar coordinates $(r,\theta), 0<r, \theta<\infty$. Let $R:(0,\infty) \to (0,\infty)$ be a continuous function. We say that $S_0: r=R(\theta)$ is a spiral at $0\in \R^2$ if $R$ is strictly monotone and 
$$\lim_{\theta \to \infty} R(\theta)=0 \, \,  \text{or} \lim_{\theta \to \infty} R(\theta)=\infty.$$
In the first case we write $R(\infty)=0$ and note that the extension $R:(0,\infty] \to [0,\infty)$ is continuous and injective. In the second case we write $R(0)=0$ and note that also the extension $R:[0,\infty) \to [0,\infty)$ is continuous and injective.

Let us introduce the homeomorphism germ induced by a spiral, defined in polar coordinates by:
$$h_{S_0}:(\R^2,0)\to(\R^2,0), h_{S_0}(r,\alpha)=(r,R^{-1}(r)+\alpha), \, 0\leq\alpha<2\pi.$$
For $0\leq\alpha<2\pi$ we put  $L_\alpha:=\{(r,\alpha)|0\leq r<\infty\}$ and $S_\alpha:=h_{S_0}(L_\alpha).$
Note that $S_0$ is just the spiral $r=R(\theta)$ together with $0\in \R^2.$

If $0\leq \alpha< 2\pi$ we denote by $R_\alpha$ the rotation of $\R^2$ centred at the origin and of angle $\alpha.$ Then the following is obvious:
\begin{rem}
$S_\alpha=R_\alpha (S_0) \, \, \text{and} \, \, D(h_{S_0}(L_\alpha))=R_\alpha(D(S_0)).$
\end{rem}
For applications to spirals we need Proposition  \ref{closed cone} modified to the following:

\begin{prop}\label{closed cone'}
Let $h : (\R^n,0) \to (\R^n,0)$ be a  homeomorphism,
and let $U$, $V \subset \R^n$ be set-germs  $0 \in \R^n$ such that $0\in \overline U\cap \overline V.$
Suppose that 
\begin{enumerate}
\item $D(U\cap V)=D(U)\cap D(V)$,
\item $U\cap V$ satisfies condition (SSP),
\item $h(U)$ satisfies condition $(SSP)$, and
\item $h$ is bi-Lipschitz.
\end{enumerate}
Then $D(h(U \cap V)) = D(h(U)) \cap D(h(V))$.
\end{prop}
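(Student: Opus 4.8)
The plan is to mimic the proof of Proposition \ref{closed cone}, keeping careful track of where the hypothesis that $U$ and $V$ are closed cones was actually used and replacing each such use by one of the four new assumptions. The inclusion $D(h(U\cap V))\subseteq D(h(U))\cap D(h(V))$ is immediate from property (5) of the direction set (item (5) of the list after Definition \ref{directionset}), so only the reverse inclusion $D(h(U))\cap D(h(V))\subseteq D(h(U\cap V))$ needs work.

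For the reverse inclusion, first I would take an arbitrary $\alpha\in D(h(U))\cap D(h(V))$ and, using $\alpha\in D(h(V))$, pick a sequence $\{a_m\}\subset V$ tending to $0$ with $h(a_m)/\|h(a_m)\|\to\alpha$. Since $h(U)$ satisfies condition $(SSP)$ (assumption (3)) and $\alpha\in D(h(U))$, there is $\{b_m\}\subset U$ with $\|h(a_m)-h(b_m)\|\ll\|h(a_m)\|,\|h(b_m)\|$; by the bi-Lipschitz assumption (4) this transfers to $\|a_m-b_m\|\ll\|a_m\|,\|b_m\|$. Passing to a subsequence, $a_{m_j}/\|a_{m_j}\|\to\beta\in D(V)$ and then, by the closeness estimate, also $b_{m_j}/\|b_{m_j}\|\to\beta\in D(U)$. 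Hence $\beta\in D(U)\cap D(V)$, which by assumption (1) equals $D(U\cap V)$. This is exactly the place where, in the original proposition, the ``closed cone'' hypothesis gave $\beta\in U\cap V$ and a half-line inside $U\cap V$ on which to play; here I replace it: since $\beta\in D(U\cap V)$ and $U\cap V$ satisfies condition $(SSP)$ (assumption (2)), and since $a_{m_j}$ is a sequence with $a_{m_j}/\|a_{m_j}\|\to\beta\in D(U\cap V)$, there is a sequence $\{c_{m_j}\}\subset U\cap V$ with $\|a_{m_j}-c_{m_j}\|\ll\|a_{m_j}\|,\|c_{m_j}\|$.

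From $\|a_{m_j}-c_{m_j}\|\ll\|a_{m_j}\|,\|c_{m_j}\|$ the bi-Lipschitz property of $h$ yields $\|h(a_{m_j})-h(c_{m_j})\|\ll\|h(a_{m_j})\|,\|h(c_{m_j})\|$, so $h(c_{m_j})/\|h(c_{m_j})\|\to\alpha$. Since $\{c_{m_j}\}\subset U\cap V$, this shows $\alpha\in D(h(U\cap V))$, completing the reverse inclusion and hence the proof. Note that the argument does not use $h$ being a homeomorphism beyond what ``bi-Lipschitz'' already provides, so assumption (4) carries the weight of transferring the three ``$\ll$'' estimates back and forth.

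The main obstacle is conceptual rather than computational: one has to notice that the only roles played by ``$U,V$ closed cones'' in Proposition \ref{closed cone} were (i) to guarantee $D(U)\cap D(V)=D(U\cap V)$, (ii) to put the limit direction $\beta$ and its half-line inside $U\cap V$, and (iii) to know that half-line satisfies condition $(SSP)$; and that assumptions (1)--(2) supply precisely (i), (ii)+(iii) combined (condition $(SSP)$ of $U\cap V$ directly produces the approximating sequence $\{c_{m_j}\}\subset U\cap V$ without ever exhibiting a half-line). Once that bookkeeping is done, every step is a verbatim repetition of the estimates in the proof of Proposition \ref{closed cone}. One should also double-check that taking subsequences is harmless here, which it is, since condition $(SSP)$ and $(WSSP)$ are equivalent by Proposition \ref{SSPWSSP}.
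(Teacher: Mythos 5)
Your proof is correct and is exactly the adaptation the paper intends: this proposition is a restatement of Theorem \ref{UV}, which the paper justifies as ``a similar argument to'' Proposition \ref{closed cone}, and you have carried out that argument, correctly replacing the closed-cone step by hypotheses (1) and (2) to produce the approximating sequence in $U\cap V$. No gaps.
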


We will use the above proposition to give a classification of spirals. (We note that in general is quite tedious to test the property of being bi-Lipschitz.)
Firstly note that for $\alpha \neq \beta, \, \alpha, \beta \in [0,2\pi)$ we have $D(L_\alpha \cap L_\beta)=D(L_\alpha)\cap D(L_\beta)=\emptyset$
and $L_\alpha \cap L_\beta=\{0\}$ satisfies condition (SSP). That is, $L_\alpha \,\,\text{and} \, L_\beta$ satisfy the first two conditions in Proposition \ref{closed cone'}.

We first consider the case when $\#(D(S_0))>1$, which is equivalent with the following condition:
There are $\alpha\neq \beta , \alpha, \beta \in [0,2\pi)$ such that $D(h_{S_0}(L_\alpha)) \cap D(h_{S_0}(L_\beta))\neq \emptyset .$

On the other hand, as $D(h_{S_0}(L_\alpha\cap L_\beta))=\emptyset,$ the conclusion of Proposition \ref{closed cone'} does not hold;  this may happen only if one or both of conditions $(3)$ and $(4)$ fail. We can therefore divide the case $\#(D(S_0))>1$ in three classes as follows:

\begin{enumerate}[(A)]
\item $S_0$ satisfies condition (SSP) at $0\in \R^2$. In this case the induced homeomorphism $h_{S_0}$ is not bi-Lipschitz. For example, this is the case for the hyperbolic spiral, $r=a/\theta, a>0.$ Note that the length of the spiral is infinite (even for $c\leq \theta <\infty, c>0).$ On the other hand the spiral $r=a/\theta^2$ also satisfies (SSP) so, although its length is finite for $c\leq \theta <\infty, c>0$,  the induced homeomorphism is,  yet again, not bi-Lipschitz.

\item The induced homeomorphism $h_{S_0}$ is bi-Lipschitz, and therefore $S_0$ does not satisfy (SSP). This is the case for the logarithmic spiral $r=ae^{-b\theta}, a, b >0.$

\item In this case $S_0$ does not satisfy (SSP) and $h_{S_0}$ is not bi-Lipschitz.
\end{enumerate}

Finally we have the remaining case when $\#D(S_0)=1.$
This condition is equivalent with the condition 
$\forall \alpha\neq \beta , \alpha, \beta \in [0,2\pi)$ we have  $D(h_{S_0}(L_\alpha)) \cap D(h_{S_0}(L_\beta)) = \emptyset .$ This is the case for the Archimedean spiral $r=a\theta, a>0$.

\medskip
 Let us recall some examples on (SSP) analysed in \cite{koikepaunescu}.

\begin{example}
(1) Let $0 < r < 1$, and let $A = \{a_m| m\in \N\}$ be a sequence
of points of $\R,$ defined by $a_m := r^m$.
Then $A$ does not satisfy (SSP) at $0 \in \R$.

(2) Let $B = \{b_m| m\in \N\}$ and $C = \{c_m| m\in \N\}$ be sequences of points of $\R$
defined by $b_m := 1/m$ and $c_m := (1/m)^2$, respectively.
Then $B$ and $C$ satisfy (SSP) at $0 \in \R$.
\end{example}
The first example  above can be used to construct  spirals belonging to the class (C), whilst the
second one  can be used to explain the examples given in the class (A).









\end{document}